\theoremstyle{plain}
\newtheorem{theorem}{Theorem}[section] 
\newtheorem{knowntheorem}{Theorem}
\newtheorem{proposition}[theorem]{Proposition}
\newtheorem{lemma}[theorem]{Lemma}
\newtheorem{observation}[theorem]{Observation}
\theoremstyle{definition}
\newtheorem{definition}[theorem]{Definition} 
\newtheorem{notation}[theorem]{Notation}
\newtheorem{question}[theorem]{Question}
\declaretheorem[name=Example, sibling=theorem, qed=$\ocircle$]{example} 
\declaretheorem[name=Remark, sibling=theorem, qed=$\Diamond$]{remark} 
\newcommand{\Emph}[1]{\textbf{#1}}              
\newcommand{\QQ}{\mathbb{Q}}
\newcommand{\NN}{\mathbb{N}}
\newcommand{\ZZ}{\mathbb{Z}}
\newcommand{\GG}{\mathbb{G}}							
\newcommand{\HH}{{\mathbb{G}'}}							
\newcommand{\isdef}{\coloneqq}						
\DeclarePairedDelimiter\abs{\lvert}{\rvert}		
\DeclarePairedDelimiter\generatedby{\langle}{\rangle}		
\newcommand{\dd}{\mathrm{d}}			
\newcommand{\xv}[1]{\mathbf{#1}}                    
\newcommand{\indicator}[1]{\mathbbm{1}_{#1}}		
\newcommand\given[1][]{\,{#1\vert}\,}			
\newcommand{\field}[1]{\mathscr{#1}}                
\newcommand{\family}[1]{\mathscr{#1}}               
\DeclareMathOperator{\supp}{supp}		
\newcommand{\symb}[1]{\mathtt{#1}}		
\newcommand{\interior}[1]{\ring{#1}}        
\DeclareMathOperator{\Interior}{int}		
\newcommand{\critical}{\mathsf{c}}                  
\newcommand{\neighbours}{\mathcal{N}}               
\newcommand{\cayley}{\operatorname{Cay}}            
\newcommand{\pine}{\mathsf{p}}                      
\newcommand{\half}{{\sfrac{1}{2}}}
\newcommand{\xint}{\mathsf{in}}                 
\newcommand{\xext}{\mathsf{ex}}                 
\newcounter{sarrow}
\title{Cellular automata, percolation and dynamical dichotomies}
\author{Sebasti\'an Barbieri, Felipe Garc\'ia-Ramos and Siamak Taati}
\date{}
\newcommand{\Addresses}{{
    \footnotesize
    \begin{samepage}
        \noindent S.~Barbieri\\
        \textsc{Departamento de Matem\'{a}tica y ciencia de la computaci\'{o}n, Universidad de Santiago de Chile, Santiago, Chile.}\\
        \indent\emph{E-mail address}: \texttt{\href{mailto:sebastian.barbieri@usach.cl}{sebastian.barbieri@usach.cl}}
    \end{samepage}
		
	\medskip

    \begin{samepage}
        \noindent F.~Garc\'ia-Ramos\\
        \textsc{Physics Institute, Universidad Autónoma de San Luis Potosí, San Luis Potosí, Mexico.}\\
        \textsc{Faculty of Mathematics and Computer Science, Jagiellonian University,  Kraków, Poland.}\\
        \indent\emph{E-mail address}: \texttt{\href{mailto:fgramos@conahcyt.mx}{fgramos@conahcyt.mx}}
    \end{samepage}
    
    \medskip

    \begin{samepage}
        \noindent S.~Taati\\
        \textsc{Department of Mathematics, American University of Beirut, Beirut, Lebanon.}\\
        \textsc{Center for Advanced Mathematical Sciences, American University of Beirut, Beirut, Lebanon.}\\
        \indent\emph{E-mail address}: \texttt{\href{mailto:siamak.taati@gmail.com}{siamak.taati@gmail.com}}
    \end{samepage}

    \medskip
}}
\begin{document}
	
	\maketitle
\begin{abstract}
    We establish a connection between percolation on the Cayley graphs of a group and the dynamical diversity of cellular automata on that group. Specifically, we demonstrate that Gilman's dichotomy between equicontinuity and sensitivity with respect to Bernoulli measures holds on a finitely generated group if and only if the group has a trivial percolation threshold. Consequently, we show that a countable group satisfies Gilman's dichotomy if and only if it is locally virtually cyclic.
    
		\medskip
		
		\noindent
		\emph{Keywords:} group theory, cellular automata, percolation, sensitivity,  equicontinuity, Bernoulli measures, random initial conditions.
		
		\smallskip
		
		\noindent
		\emph{MSC2020:} \textit{Primary:}
        37B15, 
        37A50. 
		\textit{Secondary:}
        60K35. 

        \vspace{-3em}
        \renewcommand{\contentsname}{}
        {\footnotesize\tableofcontents}

\end{abstract}
\section{Introduction}

Cellular automata (CA) are discrete-time, continuous, shift-equivariant dynamical systems on configurations of symbols on a lattice. 
The study of CA on groups has revealed deep connections between symbolic dynamics and the algebraic, recursive, and geometric properties of groups
(see~\cite{ceccherini2010cellular, CC2023} and the references therein).
In this paper, we identify a new connection between the dynamical properties of CA on groups and percolation theory.


A central theme in the study of complex systems is the interplay between order and chaos (or between structure and randomness).
In the context of dynamical systems, these are manifested in the concepts of sensitivity (to initial conditions) and equicontinuity.
Sensitivity refers to the scenario in which small perturbations can lead to significant changes in a system's evolution. The study of sensitivity goes back to the works of Lorenz and it is considered to be one of the defining characteristics of chaos~\cite[Section~1.8]{Devaney1989}.
Equicontinuity describes the stability of the trajectory of a system in response to small perturbations of the system's initial condition.
It generalizes the notion of Lyapunov stability to arbitrary trajectories of a topological dynamical system.


It has long been observed that, in the context of CA, many dynamical properties have natural interpretations in terms of the propagation of information in the configurations of the system. For instance, sensitivity can be characterized in terms of whether local information about the initial condition can propagate arbitrarily far throughout the system, while equicontinuity means the information in the initial condition of the CA remains localized.


To understand the typical behaviour of a CA, it is natural to study its evolution with random initial conditions. 
Random initial conditions are typically modeled by a Bernoulli measure on the configuration space (i.e., a probability measure induced by an i.i.d.\ process).
In this context, Gilman introduced notions of sensitivity and equicontinuity with respect to a Bernoulli measure~$\mu$ ($\mu$-sensitivity and $\mu$-equicontinuity for short).
In contrast to the purely topological notions which consider all perturbations, Gilman's notions are concerned with sensitivity and stability with respect to ``typical'' perturbations.
In this context, Gilman proved that, in the one-dimensional case (i.e., when the underlying group is~$\ZZ$), every CA falls into one (and only one) of these two categories~\cite{Gilman1987}.

\begin{knowntheorem}[Gilman's dichotomy]
\label{thm:gilman}
    Let $\varphi\colon A^\ZZ\to A^\ZZ$ be a CA and $\mu$ a Bernoulli measure on~$A^\ZZ$.
    Then, $\varphi$ is either $\mu$-equicontinuous or $\mu$-sensitive.
\end{knowntheorem}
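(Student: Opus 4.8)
The plan is to establish the dichotomy by showing that the failure of $\mu$-equicontinuity forces $\mu$-sensitivity, using the one-dimensional geometry of $\ZZ$ in an essential way. The first step is to recall the local description of the CA: fix a radius $r$ so that $\varphi$ is computed by a local rule on windows of width $2r+1$, and observe that a configuration $x$ is a $\mu$-equicontinuity point (in the relevant sense) precisely when the orbit of $x$ stays close to the orbits of $\mu$-almost every nearby configuration; concretely, this should reduce to the existence, with positive probability, of arbitrarily long \emph{blocking words} — finite patterns $w$ such that once $w$ appears in a configuration, no information can cross the region it occupies, for all future times. This is the standard mechanism (going back to Gilman and to Boyle--Kitchens) by which equicontinuity in dimension one is governed by walls.

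The second step is the probabilistic heart of the argument. Let $p > 0$ be the $\mu$-probability of the single most likely letter (or, more robustly, work with the measure of a cylinder on a fixed window). Under a Bernoulli measure, any fixed finite pattern appears infinitely often to the left and to the right of the origin almost surely, with controlled gaps; moreover, independent blocks can be concatenated freely. So the dichotomy should come down to the following alternative about the local rule: \emph{either} there exists some word that is blocking (a ``wall'' that is impervious for all times), \emph{or} no such word exists. In the first case, $\mu$-almost every configuration contains infinitely many disjoint occurrences of a blocking word on each side of the origin, which localizes the influence of any perturbation and yields $\mu$-equicontinuity. In the second case — no blocking word — one argues that for $\mu$-almost every $x$ and every window $F$, a single-cell perturbation far enough out eventually reaches $F$: the absence of walls means information from a perturbation at site $n$ travels (at least in a ``liquid'' sense, along a sequence of sites) toward the origin, and a Borel--Cantelli / independence argument over the i.i.d.\ letters between $n$ and $0$ guarantees that along $\mu \times \mu$-typical pairs this happens infinitely often, giving $\mu$-sensitivity.

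The main obstacle I anticipate is making the ``no blocking word implies propagation'' direction quantitatively correct: it is not automatic that the absence of an all-time blocking word gives propagation of a \emph{typical} perturbation, because a perturbation might die out on a particular configuration even if no universal wall exists. The right tool here is a compactness/König's-lemma argument combined with the explicit product structure of $\mu$: one shows that if with positive probability the perturbation at distance $n$ never reaches the origin for arbitrarily large $n$, then by extracting a limit one manufactures an actual blocking word (or a blocking configuration half-line whose finite approximations are blocking), contradicting the hypothesis. One must be careful that ``blocking'' is phrased for all times $t$, so the limiting pattern inherits the all-time property via a diagonal argument over $t$ as well as over the length of the window; this interchange of limits, together with checking that the exceptional null sets accumulated over countably many windows $F$ and countably many radii can be discarded, is where the bookkeeping lies. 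Once the wall dichotomy is in place, translating it into the precise definitions of $\mu$-equicontinuity and $\mu$-sensitivity used in the paper is routine.
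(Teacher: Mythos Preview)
Your plan hinges on the combinatorial dichotomy ``a topological blocking word exists / no blocking word exists'', and this is where it breaks. The paper itself supplies the counterexample: the pine processionary CA $\varphi_\pine$ of Example~\ref{exp:sens-but-not-mu-sens} has \emph{no} blocking word in your sense (indeed it is topologically sensitive, with no equicontinuity points), yet it is $\mu_q$-equicontinuous for every Bernoulli parameter $q<1$. Concretely, the word $\symb{0}^k$ blocks for only $k$ steps, after which a sufficiently long procession of $\symb{1}$'s can march across it; no finite pattern blocks for all time. Your proposed compactness/K\"onig extraction (``if perturbations fail to reach the origin with positive probability for arbitrarily large distances, extract a limiting universal wall'') therefore cannot succeed: for $\varphi_\pine$ the hypothesis holds (perturbations almost surely never reach the origin) but the conclusion (a blocking word) is false. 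The blocking mechanism here is genuinely probabilistic, not combinatorial, and no amount of bookkeeping over times $t$ and window lengths will repair this.

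The paper's proof (given as the special case $\GG=\ZZ$ of Theorem~\ref{thm:virtually_Z_satisfies_dichotomy}) avoids blocking words entirely. It works directly with the stability sets $C(z,F_0,\varphi)$: failure of $\mu$-sensitivity gives a single $z$ with $\mu\big(C(z,F_0,\varphi)\big)>0$ for a fixed window $F_0$ adapted to the radius; one then passes to the set $D$ of $\mu$-density points of $C(z,F_0,\varphi)$, applies Poincar\'e recurrence to the shift to see that almost every $x$ has translates landing in $D$ infinitely often on both sides, and uses the two-ended geometry of~$\ZZ$ (Lemma~\ref{lema:tecnicotrivial}) to sandwich any given $F$ between two such translates. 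The role of ``walls'' is played by translates of the measure-theoretic set $C(z,F_0,\varphi)$, not by any finite pattern, and the density-point property (rather than mere positive measure) is what makes the intersection of the two walls with a cylinder around $x$ have positive measure. As a bonus, this argument needs only $\GG$-ergodicity of~$\mu$, not the full product structure of a Bernoulli measure.
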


\noindent 

The precise definitions of $\mu$-sensitivity and $\mu$-equicontinuity can be found in Section~\ref{sec:sensitivityandequicontinuity} (Definitions~\ref{def:musens} and~\ref{def:mueq}).
Informally, $\mu$-sensitivity means there exists a finite region $F$ in the group $\GG$ such that, as the CA evolves, two independent random initial conditions will eventually exhibit distinct values within $F$, that is, their distinction almost surely propagates to~$F$. 
Conversely, $\mu$-equicontinuity implies that, given a finite region $F$, two random configurations that agree on a large area around $F$ will have the same values on $F$ throughout the entire evolution of the CA with high probability. In other words, information remains localized with high probability. 


Let us emphasize that in our current setting, the measure $\mu$ is not required to be invariant under the CA.
In fact, according to a general result of Huang, Lu and Ye~\cite{HLY2011}, a similar dichotomy between $\mu$-equicontinuity and $\mu$-sensitivity holds for all topological dynamical systems as long as $\mu$ is invariant and ergodic under the dynamics.

The geometry of the underlying group $\GG$ imposes restrictions on how information may propagate, and consequently, on the dynamics of the CA. For instance,
Shereshevsky showed that a CA on $\ZZ^d$ cannot be positively expansive unless $d=1$~\cite{Shereshevsky1993}. In this paper, we address the following question:
\begin{question}
    For which groups does Gilman's dichotomy hold?
\end{question}

We show that Gilman's result extends to CA on virtually $\ZZ$ groups and holds for any measure which is ergodic under the shift action 
on the configuration space
(Theorem~\ref{thm:virtually_Z_satisfies_dichotomy}). The main result of this article is that the validity of Gilman's dichotomy on a finitely generated group $\GG$ is tied to the non-triviality of the percolation threshold on the Cayley graphs of~$\GG$.



Percolation theory is the study of connectivity in the random subgraphs of an infinite locally finite graph obtained by deleting vertices (or edges) independently at random.
The central question in this field is whether or not such a random graph will contain an infinite connected component.
If such an infinite connected component exists, we say that the random graph \emph{percolates}.
Percolation models are among the simplest models in probability theory and statistical physics that exhibit phase transitions.
Let $p\in[0,1]$ denote the probability that each vertex is kept.
It can be shown that, as the parameter $p$ is varied from $0$ to~$1$, a transition occurs from the almost sure absence to the almost sure presence of percolation.
The critical value that witnesses this transition is referred to as the \emph{percolation threshold}.
It is easy to see that the percolation threshold of the Cayley graph of~$\ZZ$ with any finite generating set is~$1$.  In contrast, every lattice of dimension larger than or equal to~$2$ has a \emph{non-trivial} percolation threshold.
Recently, Duminil-Copin, Goswami, Raoufi, Severo, and Yadin proved that a Cayley graph of a finitely generated group $\GG$ has a trivial percolation threshold if and only if the group $\GG$ is virtually cyclic~\cite{DGR+2020} (Theorem~\ref{thm:percolation} below). 


We now introduce our general counter-example to Gilman's dichotomy.
This example can be thought of as an additive CA on a percolated environment.



\begin{example}[Percolated additive CA]
\label{exp:random-percolated additive-CA}
    Let $S$ be a finite generating set for a group~$\GG$, and let $A\isdef\{\symb{0},\symb{1}\}\times\{\symb{0},\symb{1}\}^S$.
    Each configuration in $A^\GG$ can be viewed as a pair of configurations $(x,w)$ where $x\in\{\symb{0},\symb{1}\}^\GG$ and $w\in\{\symb{0},\symb{1}\}^{\GG\times S}$.
    The \emph{percolated additive} CA is the CA defined by the map $\varphi\colon A^\GG\to A^\GG$, where
    \begin{align*}
        \varphi(x,w)_g &\isdef
            \bigg(\Big(\sum_{s\in S} w_{g}(s)\cdot x_{gs}\Big) \bmod{2}, w_g\bigg),
            \qquad\text{for every $g \in \GG$.}
    \end{align*}
    Here, we view the directed edge $(g,gs)$ as \emph{open} whenever $w_g(s)= \symb{1}$ and \emph{closed} otherwise. Thus, the open edges remain open, and the closed edges remain closed as the CA evolves. The state of each site (i.e., the value of its first component) is updated to the sum modulo $2$ of its neighbors in the subgraph defined by the open edges.
\end{example}



It is easy to show that the percolated additive CA cannot be sensitive with respect to any fully supported measure (Proposition~\ref{prop:random-percolated additive:not-mu-sensitive}). We will demonstrate that when $\GG$ has a non-trivial percolation threshold, one can choose the generating set $S$ in such a way that the percolated additive CA on $\GG$ is not equicontinuous with respect to the uniform Bernoulli measure.
Hence, Gilman's dichotomy fails on any such group. Using the previously mentioned characterization of the groups with non-trivial percolation threshold, we conclude the following.

\begin{theorem}
\label{thm:main-result}
    An infinite finitely generated group satisfies Gilman's dichotomy if and only if it is virtually $\ZZ$.
\end{theorem}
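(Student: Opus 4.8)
The plan is to prove both directions of Theorem~\ref{thm:main-result} by combining the two previously announced ingredients with the percolation-theoretic characterisation of Duminil-Copin--Goswami--Raoufi--Severo--Yadin (Theorem~\ref{thm:percolation}). Recall that a finitely generated group is virtually~$\ZZ$ if and only if it has a trivial percolation threshold (this is exactly the content of \cite{DGR+2020}, since for infinite groups ``virtually cyclic'' means ``virtually~$\ZZ$''). So it suffices to show: (a) if $\GG$ is virtually~$\ZZ$, then every CA on~$\GG$ satisfies Gilman's dichotomy with respect to every Bernoulli (indeed shift-ergodic) measure; and (b) if $\GG$ is \emph{not} virtually~$\ZZ$, then Gilman's dichotomy fails for some CA and some Bernoulli measure on~$\GG$.

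For direction (a), I would simply invoke Theorem~\ref{thm:virtually_Z_satisfies_dichotomy}, which is stated in the introduction as already established: CA on virtually~$\ZZ$ groups satisfy Gilman's dichotomy for any shift-ergodic measure, in particular for any Bernoulli measure. So half of the theorem is immediate from the earlier result plus the ``if'' part of Theorem~\ref{thm:percolation}.

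For direction (b), the strategy is to exhibit the percolated additive CA of Example~\ref{exp:random-percolated additive-CA} as an explicit counterexample. Suppose $\GG$ is an infinite finitely generated group that is not virtually~$\ZZ$; by Theorem~\ref{thm:percolation} it has a non-trivial percolation threshold, i.e.\ there is a finite generating set (in fact any Cayley graph works, but we get to choose~$S$) and a parameter $p<1$ for which site percolation occurs with positive probability. The introduction promises that, under this hypothesis, one can choose the generating set~$S$ so that the percolated additive CA~$\varphi$ on~$\GG$ with alphabet $A=\{\symb{0},\symb{1}\}\times\{\symb{0},\symb{1}\}^S$ is \emph{not} $\mu$-equicontinuous for~$\mu$ the uniform Bernoulli measure on~$A^\GG$; this is the forthcoming proposition (the non-equicontinuity direction). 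On the other hand, Proposition~\ref{prop:random-percolated additive:not-mu-sensitive} says the percolated additive CA is never $\mu$-sensitive for any fully supported~$\mu$, and the uniform Bernoulli measure on $A^\GG$ is fully supported. Hence $\varphi$ is neither $\mu$-equicontinuous nor $\mu$-sensitive, so Gilman's dichotomy fails on~$\GG$. Assembling (a) and (b) gives the equivalence: an infinite finitely generated group satisfies Gilman's dichotomy iff it is virtually~$\ZZ$.

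The main obstacle is entirely inside direction (b), and specifically inside the non-equicontinuity claim for the percolated additive CA --- this is the part the theorem's proof defers to a separate proposition. The heart of that argument should be: non-trivial percolation means that, with probability bounded away from zero, a fixed finite region~$F$ (say a single vertex) lies in an infinite open cluster of the environment~$w$; when this happens, the additive update on that cluster behaves like an infinite-range additive CA, so information about far-away coordinates of~$x$ can propagate into~$F$ in finitely many steps, and one can arrange two configurations agreeing on an arbitrarily large ball around~$F$ yet eventually disagreeing on~$F$ with non-negligible probability. Making this quantitative --- controlling how the light-cone of the additive rule interacts with the random cluster geometry, and choosing~$S$ (e.g.\ by passing to a generating set realising a good percolation threshold, or by taking~$S$ large enough that the relevant cluster is ``thick'') so that the probability of propagation does not decay to zero --- is the technical crux. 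The subtlety of choosing~$S$ rather than working with an arbitrary Cayley graph is presumably to ensure that open clusters contain enough structure (e.g.\ vertices of degree at least two within the cluster) for the additive dynamics to actually move information, which a naive tree-like cluster might not do; the proof of the proposition will need to pin this down.
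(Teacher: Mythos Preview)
Your assembly of the two directions is correct and matches the paper's proof exactly: direction~(a) is Theorem~\ref{thm:virtually_Z_satisfies_dichotomy}, and direction~(b) combines Theorem~\ref{thm:percolation}, Proposition~\ref{prop:random-percolated additive:not-mu-sensitive}, and the non-equicontinuity result for the percolated additive CA with respect to the uniform Bernoulli measure.

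Your speculation about \emph{why} $S$ must be chosen, however, misses the actual mechanism. It has nothing to do with cluster ``thickness'' or degree structure. The uniform Bernoulli measure on $A^\GG$ fixes the environment parameter: each bond $w_g(s)$ is open with probability exactly~$\half$, and this cannot be tuned. The paper introduces a \emph{dependence process} $(M_n(w))_{n\geq 0}$ tracking which initial coordinates influence $\varphi_w^n(x)_e$, and couples it with the site-percolation cluster exploration process at parameter~$\half$ so that both survive or terminate together (Proposition~\ref{prop:dependence-process-vs-cluster-exploration-process}); survival of the dependence process then forces non-equicontinuity via a parity argument using Lemma~\ref{lem:parity-of-iid-Bernoulli-is-Bernoulli} (Proposition~\ref{prop:not-mu-equicontinuous-if-dependence-process-survives}). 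For the coupling to yield anything, one needs $\mu_\half$ to percolate in $\cayley(\GG,S)$, i.e.\ $p_\critical(\cayley(\GG,S))<\half$. That is the sole reason $S$ is chosen: Proposition~\ref{prop:halfpercolation} shows that on any group with non-trivial threshold, a generating set with threshold below any prescribed $\alpha\in(0,1)$ exists. So the correct chain in direction~(b) is Theorem~\ref{thm:percolation} $\Rightarrow$ Proposition~\ref{prop:halfpercolation} $\Rightarrow$ Proposition~\ref{prop:dependence-process-vs-cluster-exploration-process} $\Rightarrow$ Proposition~\ref{prop:not-mu-equicontinuous-if-dependence-process-survives}, together with Proposition~\ref{prop:random-percolated additive:not-mu-sensitive}.
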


\noindent The proof of this theorem appears in Section~\ref{sec:Gilmandichotomy}.

In Section~\ref{sec:gilman-on-countable-groups}, we extend this characterization to cover all countable groups.
This is done via reduction to the finitely generated case.
We recall that a group $\GG$ is \emph{locally virtually cyclic} if every finitely generated subgroup of $\GG$ is either finite or contains an isomorphic copy of $\ZZ$ as a finite index subgroup.

\begin{theorem}
\label{thm:main2}
    A countable group satisfies Gilman's dichotomy if and only if it is locally virtually cyclic.
\end{theorem}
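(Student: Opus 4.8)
The plan is to reduce the countable case to the finitely generated case handled by Theorem~\ref{thm:main-result}, exploiting the fact that Gilman's dichotomy is a statement about \emph{all} CA and \emph{all} Bernoulli (or shift-ergodic) measures, while ``locally virtually cyclic'' is a condition about finitely generated subgroups. So I would prove the two implications separately.

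\textbf{($\Leftarrow$) If $\GG$ is locally virtually cyclic, then $\GG$ satisfies Gilman's dichotomy.} Let $\varphi\colon A^\GG\to A^\GG$ be a CA with a memory set (neighbourhood) $M\subseteq\GG$, which is finite, and let $\mu$ be a Bernoulli measure on $A^\GG$. Let $\HH\isdef\langle M\rangle$ be the subgroup generated by $M$; it is finitely generated, hence by hypothesis either finite or virtually~$\ZZ$. The key structural observation is that $\varphi$ acts ``coordinate-wise along cosets of $\HH$'': writing $\GG$ as a disjoint union of right cosets $\HH t_i$, a configuration $x\in A^\GG$ decomposes as an independent family $(x\restriction_{\HH t_i})_i$, and because the local rule only looks within $M\subseteq\HH$, the restriction of $\varphi(x)$ to a coset $\HH t_i$ depends only on $x\restriction_{\HH t_i}$. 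Thus $\varphi$ is conjugate (via the obvious identification $A^\GG\cong\prod_i A^{\HH t_i}\cong (A^\HH)^{\GG/\HH}$) to a product of copies of a single CA $\psi\colon A^\HH\to A^\HH$, one for each coset; and $\mu$ corresponds to a product of copies of a Bernoulli measure $\nu$ on $A^\HH$. The work is then to show: (i) $\psi$ is $\nu$-equicontinuous $\Rightarrow$ $\varphi$ is $\mu$-equicontinuous, and (ii) $\psi$ is $\nu$-sensitive $\Rightarrow$ $\varphi$ is $\mu$-sensitive. Both should follow fairly directly from the definitions in Section~\ref{sec:sensitivityandequicontinuity}, since a finite window $F\subseteq\GG$ meets only finitely many cosets, and $\mu$-equicontinuity/$\mu$-sensitivity of a product reduces to the same property of the factors on the finitely many relevant coordinates. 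Finally, if $\HH$ is finite then $A^\HH$ is a finite alphabet and $\psi$ is a homeomorphism of a finite discrete space composed with... more precisely, $A^\HH$ is finite so $\psi$ is eventually periodic and trivially $\nu$-equicontinuous; if $\HH$ is virtually~$\ZZ$, Theorem~\ref{thm:main-result} (or rather Theorem~\ref{thm:virtually_Z_satisfies_dichotomy}, which covers shift-ergodic measures and hence Bernoulli measures) gives the dichotomy for $\psi$. Either way $\varphi$ is $\mu$-equicontinuous or $\mu$-sensitive.

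\textbf{($\Rightarrow$) If $\GG$ is \emph{not} locally virtually cyclic, then Gilman's dichotomy fails on $\GG$.} By hypothesis there is a finitely generated subgroup $\HH\leq\GG$ that is infinite and not virtually~$\ZZ$. By Theorem~\ref{thm:main-result}, Gilman's dichotomy fails on $\HH$: there is a CA $\psi\colon B^\HH\to B^\HH$ and a Bernoulli measure $\nu$ on $B^\HH$ such that $\psi$ is neither $\nu$-equicontinuous nor $\nu$-sensitive. (Concretely, one can take the percolated additive CA from Example~\ref{exp:random-percolated additive-CA} on $\HH$ with a suitable generating set and the uniform measure.) I would then ``induce'' this example up to $\GG$: put $A\isdef B$, fix a transversal $\{t_i\}$ for the right cosets of $\HH$ in $\GG$, and define $\varphi\colon A^\GG\to A^\GG$ to act as $\psi$ independently on each coset $\HH t_i$ (using the identification of $\HH t_i$ with $\HH$ via right multiplication by $t_i^{-1}$, chosen $\HH$-equivariantly). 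One must check that $\varphi$ is a genuine CA on $\GG$ — it is shift-commuting and continuous because the local rule of $\psi$ has finite memory set inside $\HH$, and left-multiplication by $g\in\GG$ permutes the cosets while acting by the $\psi$-local rule inside each — and that the product Bernoulli measure $\mu\isdef\nu^{\otimes\GG/\HH}$ on $A^\GG\cong(B^\HH)^{\GG/\HH}$ is again Bernoulli on $A^\GG$. Then $\varphi$ inherits the pathology: it is not $\mu$-equicontinuous (failure already occurs within a single coset, pick $F$ inside $\HH t_0$) and not $\mu$-sensitive (for the coset decomposition, $\mu$-sensitivity with window $F$ would require $\psi$-sensitivity on the finitely many cosets meeting $F$, contradicting non-$\nu$-sensitivity of $\psi$). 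Hence $\GG$ violates Gilman's dichotomy.

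\textbf{Main obstacle.} The conceptual content is the ``coset decomposition'' of a CA whose memory set generates a proper subgroup, together with the two lemmas that $\mu$-equicontinuity and $\mu$-sensitivity pass between a CA and a product of its copies (in both directions). The care needed is mostly bookkeeping: making the identification $A^\GG\cong(A^\HH)^{\GG/\HH}$ compatible with the shift actions (one gets a $\GG/\HH$-indexed product on which $\GG$ acts by permuting coordinates and shifting within them), checking that restricting a Bernoulli measure to this product structure yields a product of Bernoulli measures, and verifying that $\varphi$ (resp.\ the induced $\varphi$) is continuous and shift-equivariant. The one genuine subtlety is that the $t_i^{-1}$-conjugation used to identify $\HH t_i$ with $\HH$ must be arranged so that the $\HH$-action is preserved on each coset — this is automatic for right cosets with right multiplication, but it is worth stating carefully. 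None of these steps is deep; the real input is Theorem~\ref{thm:main-result}, and the rest is a clean reduction.
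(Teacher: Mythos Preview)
Your plan is correct and essentially matches the paper's proof: the paper packages your ``coset decomposition'' and the transfer of $\mu$-equicontinuity/$\mu$-sensitivity between $\varphi$ on $A^\GG$ and the induced $\psi$ on $A^\HH$ into a single lemma (Lemma~\ref{lema:extension_from_subgroups}), and Theorem~\ref{thm:main2} then follows exactly as you outline, invoking Theorem~\ref{thm:virtually_Z_satisfies_dichotomy} for virtually~$\ZZ$ subgroups and Theorem~\ref{thm:main-result} for the failure direction. One bookkeeping note: in the paper's convention $\varphi(x)_g=f\big((g^{-1}x)_K\big)$ depends on $x_{gK}$, so it is the \emph{left} cosets $w\HH$ (not the right cosets $\HH t_i$) that are preserved by the local rule when $K\subseteq\HH$; the paper accordingly decomposes $F$ along left cosets and writes $\mu\big(C(x,F,\varphi)\big)=\prod_i\widetilde{\mu}\big(C((w_i^{-1}x)_\HH,F_i,\widetilde{\varphi})\big)$.
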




We remark that the topological (non-stochastic) concepts of sensitivity and equicontinuity have been thoroughly studied for CA on~$\ZZ$. In this context, K\r{u}rka has established a dichotomy similar to Gilman's dichotomy, proving that every CA on $\ZZ$ is either sensitive to initial conditions or has a residual set of equicontinuity points~\cite{Kurka1997}. In contrast, Sablik and Theyssier have provided an example of a CA on $\ZZ^2$ that is not sensitive and has no equicontinuity points~\cite{ST2010}.
It is conjectured that such CA exist on all countable groups that are not virtually cyclic~\cite[Conjecture 5.2.25]{Bitar-thesis_2024}.
This and other open questions are discussed in Section~\ref{sec:remarks}.

\paragraph{Acknowledgements.}
The authors would like to thank Nicanor Carrasco-Vargas for allowing us to use his proof of Lemma~\ref{lem:fat_nonamenable}, and Nicol\'as Bitar and Gourab Ray for providing helpful references.  We also thank the organizers of the thematic month ``Discrete Mathematics \& Computer Science: Groups, Dynamics, Complexity, Words'' at the Centre International de Rencontres Mathématiques (CIRM), Marseille, during which this project was partly developed.

S.~Barbieri was supported by the ANID grant FONDECYT regular 1240085, the DICYT grant 606, and the Institut de Mathématiques de Marseille, Aix-Marseille Université. 

F.~García-Ramos was supported by the Grant U1U/W16/NO/01.03 of the Strategic Excellence Initiative program of
the Jagiellonian University, and the grant K/NCN/000198 of the Narodowe Centrum Nauki. 

S.~Taati was supported by the Center for Advanced Mathematical Sciences (CAMS), American University of Beirut (CAMS ORCID: 0009-0004-5763-5004), and the Institut de Mathématiques de Marseille, Aix-Marseille Université.




\section{Preliminaries}
\label{sec:preliminaries}
 
We use the notation $A\Subset B$ to indicate that $A$ is a finite subset of $B$.

\subsection{Graphs and groups}

In this paper, a \Emph{graph} refers to a countable directed graph, that is, a pair $(V,E)$, where $V$ is a countable set representing the \Emph{vertices}, and $E\subseteq V\times V$, represents the \Emph{edges} of the graphs.
We will denote the set of the vertices of a graph $\Gamma$ by $V(\Gamma)$ and the set of its edges by $E(\Gamma)$.
We sometimes refer to vertices as \Emph{sites} and to edges as \Emph{bonds}.

A finite \Emph{path} in a graph~$\Gamma$ refers to a sequence $(v_1,v_2,\ldots,v_m)$ of vertices in $V(\Gamma)$ such that $(v_i,v_{i+1})\in E(\Gamma)$ for each $i$. An infinite path is defined analogously.
A path is said to be \Emph{self-avoiding} if it does not visit any vertex more than once.

Throughout this paper, $\GG$ stands for a countable group with identity element~$e$.
If $\GG$ is finitely generated and we consider a finite set of generators (or generating set) $S$, we will always implicitly assume that $S$ is \Emph{symmetric}, that is, closed by inverses. We denote the \Emph{Cayley graph} of $\GG$ associated to a finite generating set~$S$ by $\cayley(\GG,S)$. To recall, this is the graph with vertex set $\GG$ and edge set $\big\{(g,gs): \text{$g\in \GG$ and $s\in S$}\big\}$.
The \Emph{length} of an element $g\in\GG$ with respect to $S$, denoted by $\abs{g}_S$, refers to the length of the shortest representation of $g$ as a product of elements from~$S$.
This value coincides with the length of the shortest path from $e$ to $g$ in $\cayley(\GG,S)$.
The \Emph{ball} of radius $n$ around $g\in\GG$ in $\cayley(\GG,S)$ is the set $B_n(g)\isdef\{gh: \abs{h}_S\leq n\}$.
The \Emph{centered} ball of radius~$n$ is $B_n\isdef B_n(e)$. 

Given $g\in\GG$ and $B\subseteq\GG$, we use the notation $gB\isdef\{gh: h\in B\}$.  Similarly, given $A,B\subseteq\GG$, we define $AB\isdef\{gh: \text{$g\in A$ and $h\in B$}\}$.

Recall that $g\in\GG$ is a \Emph{torsion element} if there exists an integer $n\geq 1$ such that $g^n=e$. A group $\GG$ is called \Emph{virtually $\ZZ$} if it contains a finite index subgroup which is isomorphic to $\ZZ$. Equivalently, $\GG$ is virtually $\ZZ$ if it contains a non-torsion element $h\in \GG$ such that $\generatedby{h} \isdef \{ h^{k} : k \in \ZZ\}  $ is a finite index subgroup of $\GG$.

A group $\GG$ is called \Emph{virtually cyclic} if it contains a cyclic subgroup of finite index, that is, if $\GG$ is either finite or virtually $\ZZ$. A \Emph{locally virtually cyclic} group is a group whose finitely generated subgroups are all virtually cyclic.

For further background on the geometric aspects of countable groups, we refer to the monograph by Meier~\cite{Meier2008}.

 \subsection{Cellular automata}
 \label{sec:prelim:ca}

Let $G$ be a countable set, and $A$ be a finite set with $\abs{A}\geq 2$, which we call an \Emph{alphabet}.  A map $x\colon G\to A$ is referred to as a \Emph{configuration} (on $G$).  For a configuration $x\in A^{G}$ and an element  $g\in G$, we use the notations $x_g$ and $x(g)$ interchangeably, and interpret it as the symbol at position~$g$.
The restriction of a configuration $x\in A^{G}$ to a set $F\subseteq G$ will be denoted by $x_F$.
A map $w\colon F\to A$ with $F\Subset G$ is called a (finite) \Emph{pattern}.
Every pattern $w\colon F\to A$ defines a set
\begin{align*}
    [w] &\isdef \{x\in A^{G}: x_F=w\} \;,
\end{align*}
which is called a \Emph{cylinder}.

The set $A^{G}$ of all configurations is endowed with the product topology inherited from the discrete topology on~$A$.
We also equip $A^{G}$ with the Borel $\sigma$-algebra which, in the current setting, coincides with the product $\sigma$-algebra when $A$ is given the discrete $\sigma$-algebra.
The cylinders form a basis for the topology on $A^{G}$.  Furthermore, every probability measure on~$A^{G}$ is uniquely determined by the probabilities it assigns to the cylinders.

We are mostly interested in the case when $G$ is a group, in which case there is a natural left action by translations.
As before, let $\GG$ be a countable group.
The \Emph{shift} action of $\GG$ on $A^\GG$ is given by the map $(g,x)\mapsto gx$, where $(gx)_h\isdef x_{g^{-1}h}$.
A map $\varphi \colon A^{\GG}\rightarrow A^{\GG}$ is $\GG$-\Emph{equivariant} if $\varphi(gx)=g\varphi(x)$ for all $g\in \GG$ and $x\in A^{\GG}$. 

\begin{definition}
\label{def:CA}
    A \Emph{cellular automaton} (\Emph{CA}) is a map $\varphi \colon A^{\GG}\to A^{\GG}$ that is continuous and $\GG$-equivariant. 
\end{definition}
\noindent We are interested in the dynamical system obtained by iterating~$\varphi$ on an initial configuration.

The following theorem, due to Curtis, Hedlund and Lyndon~\cite{Hedlund1969}, provides an equivalent definition for CA.
\begin{knowntheorem}[Local description of CA]
\label{thm:curtis}
    A map $\varphi \colon A^{\GG}\rightarrow A^{\GG}$ is a CA if and only if there exists a set $K\Subset G$ and a function $f\colon A^K\to A$ such that 
    \begin{align*}
        \varphi(x)_g &= f\big((g^{-1}x)_K\big)
    \end{align*}
    for all $x\in A^{\GG}$ and all $g\in \GG$.
\end{knowntheorem}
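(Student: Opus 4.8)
The plan is to prove the two implications separately; the forward direction (that a CA admits a local rule) is the one with actual content, and it rests on the compactness of $A^\GG$.

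For the direction ``$\Leftarrow$'', suppose $\varphi$ is given by the displayed formula for some $K\Subset\GG$ and $f\colon A^K\to A$. First I would note that, by the definition of the shift, $(g^{-1}x)_k = x_{gk}$, so $(g^{-1}x)_K$ is determined by the restriction $x_{gK}$; hence each coordinate $\varphi(x)_g$ depends on only finitely many coordinates of $x$, and $\varphi$ is continuous (the preimage of a cylinder is a finite union of cylinders). For $\GG$-equivariance I would compute $\varphi(hx)_g = f\big((g^{-1}hx)_K\big)$ and, on the other hand, $(h\varphi(x))_g = \varphi(x)_{h^{-1}g} = f\big(((h^{-1}g)^{-1}x)_K\big) = f\big((g^{-1}hx)_K\big)$, directly from the definitions, and conclude that the two sides agree.

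For the direction ``$\Rightarrow$'', assume $\varphi$ is continuous and $\GG$-equivariant. The key object is the map $\pi\colon A^\GG\to A$ defined by $\pi(x)\isdef\varphi(x)_e$; it is continuous, being the composition of $\varphi$ with the (continuous) projection onto the coordinate at~$e$, and $A$ carries the discrete topology. By continuity, for each $x\in A^\GG$ there is a finite set $F_x\Subset\GG$ such that $\pi$ is constant on the cylinder $[x_{F_x}]$. These cylinders form an open cover of the compact space $A^\GG$, so finitely many of them, say $[x^{(1)}_{F_1}],\ldots,[x^{(n)}_{F_n}]$, already cover $A^\GG$. Setting $K\isdef F_1\cup\cdots\cup F_n$, any two configurations that agree on $K$ belong to the same cylinder from this finite list, and therefore $\pi(x)$ depends only on $x_K$; this defines a function $f\colon A^K\to A$ by $f(x_K)\isdef\pi(x)=\varphi(x)_e$. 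Finally I would promote this to all coordinates using equivariance: for any $g\in\GG$,
\begin{align*}
    \varphi(x)_g &= \big(g^{-1}\varphi(x)\big)_e = \varphi(g^{-1}x)_e = f\big((g^{-1}x)_K\big),
\end{align*}
where the first equality is the definition of the shift, the second is $\GG$-equivariance, and the third is the definition of $f$.

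The only genuine obstacle is the extraction of a single finite neighbourhood $K$ that works simultaneously for all configurations, i.e., the compactness step; this is precisely where the finiteness of $A$ is used, so that $A^\GG$ is compact by Tychonoff's theorem. Everything else is bookkeeping with the left shift action, where the one point to keep straight is the direction of the action, which is what forces the local rule to read $f\big((g^{-1}x)_K\big)$ rather than $f\big((gx)_K\big)$.
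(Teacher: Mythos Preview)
The paper does not prove this theorem; it states it as a known result of Curtis, Hedlund and Lyndon and cites~\cite{Hedlund1969}. Your argument is correct and is the standard proof: the backward implication is a direct verification, and the forward implication uses compactness of $A^\GG$ to extract a single finite neighbourhood~$K$ on which $x\mapsto\varphi(x)_e$ factors, then propagates to all coordinates via $\GG$-equivariance.
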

\noindent Thus the symbol $\varphi(x)_g$ is uniquely determined by the finite pattern $x_{gK}$.

For more on the topological dynamics of one-dimensional CA see the monograph by Kůrka~\cite{Kurka2003a}.
For the group-theoretic aspects of CA, see the monograph by Ceccherini-Silberstein and Coornaert~\cite{ceccherini2010cellular}. 

This paper concerns CA with random initial conditions. Random initial conditions will be prescribed by probability measures on $A^{\GG}$.
A probability measure $\mu$ on $A^\GG$ is said to be \Emph{$\GG$-invariant} if $\mu(g^{-1}E)=\mu(E)$ for every measurable set $E\subseteq A^\GG$ and every $g\in\GG$.
A measurable set $E\subseteq\GG$ is said to be \Emph{$\GG$-invariant} if $g^{-1}E=E$ for each~$g\in\GG$.
A $\GG$-invariant measure $\mu$ is \Emph{$\GG$-ergodic} if for every $\GG$-invariant measurable set $E\subseteq\GG$, we either have $\mu(E)=0$ or $\mu(E)=1$.
Bernoulli measures on $A^\GG$ are examples of $\GG$-ergodic measures. 

\subsection{Percolation}
\label{sec:prelim:percolation}

Let $A$ be a finite set,  $p\colon A\to [0,1]$ be a probability distribution and $G$ a countable set. The \Emph{Bernoulli} measure with marginal~$p$ is the probability measure $\mu_p$ on $A^{G}$ given by
\begin{align*}
    \mu_p([w]) &= \prod_{g\in F} p(w_g)
\end{align*}
for every pattern $w\colon F\to A$ with $F\Subset G$.

Let $\Gamma$ be a graph and $x\in\{\symb{0},\symb{1}\}^{V(\Gamma)}$.  A vertex $v\in V(\Gamma)$ with $x(v)=\symb{1}$ is interpreted as being \Emph{open} in~$x$; otherwise, it is considered \Emph{closed}.  A path in $\Gamma$ is said to be \Emph{open} in~$x$ if all its vertices are open. We say that $x$ \Emph{percolates} in~$\Gamma$ if there exists an infinite open self-avoiding path in~$x$. A probability measure $\mu$ on $\{\symb{0},\symb{1}\}^{V(\Gamma)}$ is said to \Emph{percolate} in $\Gamma$ if 
 \begin{align*}
    \mu\Big(\big\{x \in \{\symb{0},\symb{1}\}^{V(\Gamma)}: \text{$x$ percolates in $\Gamma$}\big\}\Big) &> 0\;.
 \end{align*}



For $p\in[0,1]$, let us consider the Bernoulli measure $\mu_p$ with parameter~$p$ on~$\{\symb{0},\symb{1}\}^\GG$ (i.e., the Bernoulli measure with marginal $\symb{0}\mapsto 1-p$, $\symb{1}\mapsto p$).
By the Kolmogorov zero-one law, for each~$p$, the probability of percolation with respect to $\mu_p$ is either~$0$ or~$1$.
The \Emph{percolation threshold} of $\Gamma$ is defined~as
\begin{align*}
    p_{\critical}(\Gamma) &\isdef
        \sup\big\{p\in [0,1]: \text{$\mu_p$ does not percolate in $\Gamma$}\big\} \;.
\end{align*}
A standard monotonicity argument shows that for every $p>p_\critical(\Gamma)$, the measure $\mu_p$ percolates in~$\Gamma$.
We say that $\Gamma$ has \Emph{non-trivial} percolation threshold if $p_\critical(\Gamma)<1$.

The notions of percolation for configurations of open and closed edges and for measures on such configurations are defined analogously.  To distinguish between them, these two types of percolation are often referred to as \Emph{site} and \Emph{bond} percolation.
A graph with bounded degrees has a non-trivial site percolation threshold if and only if it has a non-trivial bond percolation threshold (see \cite[Propositions~7.10 and~7.11]{LP2016}).
In this paper, we are primarily concerned with site percolation.


It is well-known that, for a finitely generated group~$\GG$, the non-triviality of percolation threshold on the Cayley graphs of $\GG$ does not depend on the choice of the generators (see for instance~\cite[Theorem 7.15]{LP2016}).  Namely, if $S$ and $T$ are two finite generating sets for $\GG$, then $p_{\critical}(\cayley(\GG,S))<1$ if and only if $p_{\critical}(\cayley(\GG,T))<1$.
Thus, we say that $\GG$ has \Emph{non-trivial} percolation threshold if $p_{\critical}(\cayley(\GG,S))<1$ for some (equivalently, for every) choice of the generating set $S$.



Every virtually~$\ZZ$ group is two-ended (see e.g.,~\cite[Corollary 11.34]{Meier2008}) and hence is easily seen to have trivial percolation threshold.
On the other hand, following several partial results by various authors (see the references in~\cite[Section~7.4]{LP2016} and~\cite{DGR+2020}), Duminil-Copin, Goswami, Raoufi, Severo, and Yadin recently proved that every group of super-linear growth (i.e., every group that is not virtually cyclic) has non-trivial percolation threshold~\cite{DGR+2020}, thus establishing the following characterization.

\begin{knowntheorem}[Characterization of groups with non-trivial percolation]
\label{thm:percolation}
    A finitely generated group has a trivial percolation threshold if and only if it is virtually cyclic.
\end{knowntheorem}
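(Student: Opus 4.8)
The plan is to establish the two implications separately. Throughout, fix a finite symmetric generating set $S$ for $\GG$ and write $\Gamma\isdef\cayley(\GG,S)$; by the generating-set independence recalled above, it suffices to argue for this one $\Gamma$.

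\emph{Virtually cyclic $\Rightarrow$ $p_\critical(\Gamma)=1$.} If $\GG$ is finite, then $\Gamma$ has no infinite self-avoiding path, so $\mu_p$ never percolates and $p_\critical(\Gamma)=1$. If $\GG$ is virtually $\ZZ$, then $\Gamma$ is quasi-isometric to $\ZZ$, hence two-ended, and therefore carries a proper ``height function'' $\ell\colon\GG\to\ZZ$ which changes by at most a constant $D$ along each edge and whose level sets are uniformly finite, say $\abs{\ell^{-1}(n)}\leq L$ for all $n$. The sets $W_n\isdef\ell^{-1}(\{2nD,\dots,2nD+D\})$, $n\in\ZZ$, are then pairwise disjoint and of size at most $(D+1)L$, and every infinite self-avoiding path in $\Gamma$ -- which must have $\ell\to+\infty$ or $\ell\to-\infty$ along it, since otherwise it would stay in a finite set -- meets $W_n$ for all sufficiently large $n$ or for all sufficiently small $n$. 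For any fixed $p<1$, the events ``$W_n$ is entirely closed'' are independent and each has $\mu_p$-probability at least $(1-p)^{(D+1)L}>0$, so by the second Borel--Cantelli lemma infinitely many of them occur $\mu_p$-almost surely; since an open path cannot cross an entirely closed set, $\mu_p$-almost surely there is no infinite open self-avoiding path. Hence $\mu_p$ does not percolate for any $p<1$, i.e.\ $p_\critical(\Gamma)=1$.

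\emph{Not virtually cyclic $\Rightarrow$ $p_\critical(\Gamma)<1$.} This is the hard direction; I would split it into three steps. First, invoke the classical growth gap: a finitely generated group that is not virtually cyclic has at least quadratic growth, $\abs{B_n}\geq cn^2$ for some $c>0$. Second, feed this into the Coulhon--Saloff-Coste isoperimetric inequality, which converts a lower bound on volume growth into a lower bound on boundary size: from $\abs{B_n}\geq cn^2$ one gets $\abs{\partial F}\geq c'\abs{F}^{1/2}$ for every finite $F\Subset\GG$ (and $\abs{\partial F}\geq c'\abs{F}^{(D-1)/D}$ if the growth is polynomial of degree $D\geq 2$, with an even stronger bound in the super-polynomial case); in particular $\Gamma$ satisfies the isoperimetric inequality $\mathrm{IP}_d$ for some $d>1$. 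Third, apply the main theorem of Duminil-Copin, Goswami, Raoufi, Severo and Yadin~\cite{DGR+2020}: every bounded-degree transitive graph satisfying $\mathrm{IP}_d$ with $d>1$ has $p_\critical<1$. A Cayley graph is bounded-degree and transitive, and the triviality of its site and bond thresholds coincide (as recalled above), so $p_\critical(\Gamma)<1$.

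\emph{Main obstacle.} The growth gap and the isoperimetric estimate are standard inputs, so the entire weight of the statement rests on the third step -- the implication ``$\mathrm{IP}_d$ with $d>1$ forces a phase transition''. That implication is precisely where the earlier partial results stalled, and \cite{DGR+2020} settles it by relating Bernoulli percolation to the sign clusters of the Gaussian free field and running a multi-scale renormalization. Reproducing it would mean importing that whole argument, so I would ultimately use Theorem~\ref{thm:percolation} as a black box rather than reprove it here.
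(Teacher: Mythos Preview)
The paper does not supply its own proof of this statement: it is recorded as a known theorem (Theorem~C), with the easy direction attributed in one line to two-endedness and the hard direction simply cited from~\cite{DGR+2020}; your proposal mirrors this exactly, fleshing out the easy direction with a correct height-function/Borel--Cantelli argument and deferring the hard direction to~\cite{DGR+2020}. One wording issue: your closing sentence proposes to ``use Theorem~\ref{thm:percolation} as a black box'', which is literally circular since that \emph{is} the statement at hand---you mean to invoke the main theorem of~\cite{DGR+2020} directly.
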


For more on percolation theory, see the monographs by Grimmett~\cite{Grimmett1999} and
Lyons and Peres~\cite{LP2016}, 
and the recent survey by Duminil-Copin~\cite{DuminilCopin2019}.

\section{Sensitivity and equicontinuity}
\label{sec:sensitivityandequicontinuity}

In this section, we introduce the notions of sensitivity and equicontinuity with respect to a measure in the setting of CA on groups.



Given a CA $\varphi\colon A^{\GG}\to A^{\GG}$, we define the \Emph{stability set} of a configuration $x\in A^{\GG}$ with respect to a set $F\Subset \GG$ as
\useshortskip
\begin{align*}
    C(x,F,\varphi) &\isdef \{y\in A^{\GG}: \text{$\varphi^n(y)_F = \varphi^n(x)_F$ for all $n\geq 0$}\} \;.
\end{align*} 
This is the set of all configurations whose orbits under $\varphi$ agree with the orbit of $x$ inside the region~$F$.
Let us observe that: 
\begin{observation}
\label{obs:stability-set}
    We have that $y\in C(x,F,\varphi)$ if and only if $C(y,F,\varphi)=C(x,F,\varphi)$.
\end{observation}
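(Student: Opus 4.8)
The plan is to recognize $C(\cdot,F,\varphi)$ as the family of equivalence classes of a natural equivalence relation, so that the statement becomes the familiar fact that two points lie in the same class precisely when their classes coincide. Concretely, I would introduce the relation $\approx_{F,\varphi}$ on $A^\GG$ defined by $x\approx_{F,\varphi}y$ if and only if $\varphi^n(x)_F=\varphi^n(y)_F$ for all $n\geq 0$, and observe that this relation is reflexive, symmetric, and transitive (each being immediate from the corresponding property of equality of the patterns $\varphi^n(\cdot)_F$). By definition, $C(x,F,\varphi)$ is exactly the equivalence class of $x$ under $\approx_{F,\varphi}$.

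For the forward implication, assume $y\in C(x,F,\varphi)$, i.e.\ $x\approx_{F,\varphi}y$. If $z\in C(y,F,\varphi)$, then $z\approx_{F,\varphi}y\approx_{F,\varphi}x$, so by transitivity $z\in C(x,F,\varphi)$; hence $C(y,F,\varphi)\subseteq C(x,F,\varphi)$. The reverse inclusion follows symmetrically, using that $x\approx_{F,\varphi}y$ (symmetry) to run the same argument with the roles of $x$ and $y$ exchanged. Thus $C(y,F,\varphi)=C(x,F,\varphi)$.

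For the converse, assume $C(y,F,\varphi)=C(x,F,\varphi)$. Since $\varphi^n(y)_F=\varphi^n(y)_F$ trivially for all $n\geq 0$, we have $y\in C(y,F,\varphi)$, and therefore $y\in C(x,F,\varphi)$. This completes the equivalence.

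I do not anticipate any genuine obstacle here; the only thing to be careful about is making the symmetry step explicit (that $y\in C(x,F,\varphi)$ entails $x\in C(y,F,\varphi)$), since both inclusions in the forward direction rely on it. If one prefers to avoid naming a new relation, the same three short arguments can be written directly in terms of the defining condition "$\varphi^n(\cdot)_F$ agree for all $n\geq 0$", but the equivalence-relation phrasing makes the structure of the proof transparent.
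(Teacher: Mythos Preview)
Your proof is correct. The paper itself does not supply a proof of this observation---it is stated as self-evident and immediately followed by the remark that the sets $C(x,F,\varphi)$ partition $A^\GG$---so your equivalence-relation framing is precisely the justification the paper leaves implicit.
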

\noindent Therefore, ignoring the repetitions, the sets $C(x,F,\varphi)$ partition~$A^\GG$. 

\subsection{Sensitivity}

A CA $\varphi\colon A^\GG\to A^\GG$ is said to be \Emph{sensitive} if there exists a set $F\Subset\GG$ such that, for every configuration $x\in A^\GG$ and every set $E\Subset\GG$, there exists a configuration $y\in A^\GG$ and a time $t\geq 0$ such that $y_E=x_E$ but $\varphi^t(y)_F\neq\varphi^t(x)_F$.
It is not hard to check that $\varphi$ is sensitive if and only if there exists an $F\Subset \GG$ such that $C(x,F,\varphi)$ has empty interior for every $x\in A^{\GG}$.


Gilman introduced the following measurable version of sensitivity, which he called ``almost expansivity''~\cite{Gilman1987}.

\begin{definition}[Sensitivity w.r.t.\ a probability measure]
\label{def:musens}
   Let $\varphi\colon A^{\GG}\to A^{\GG}$ be a CA and $\mu$ a probability measure on $A^{\GG}$.  We say that $\varphi$ is \Emph{sensitive with respect to $\mu$} (\Emph{$\mu$-sensitive} for short) if there exists a finite set $F\Subset \GG$ such that $\mu\big(C(x,F,\varphi)\big)=0$ for $\mu$-almost every~$x\in A^{\GG}$.
\end{definition}

In analysis and dynamical systems theory, analogies between topological and measurable concepts are often insightful, enabling the translation of results from one framework to the other and the formulation of reasonable conjectures. For a survey of such interactions, see the monograph by Oxtoby~\cite{Oxtoby1996} and the article by Glasner and Weiss~\cite{GW2006}.

\begin{remark}
\label{rem:musens:sure}
    The ``almost sure'' quantifier in the definition of $\mu$-sensitivity can be replaced with a ``sure'' quantifier; that is, a CA $\varphi:A^\GG\to A^\GG$ is $\mu$-sensitive if and only if there exists an $F\Subset\GG$ such that $\mu\big(C(x,F,\varphi)\big)=0$ for every $x\in A^\GG$.
    Indeed, from Observation~\ref{obs:stability-set} it follows that if $\mu\big(C(x,F,\varphi)\big)>0$ for some $x$, then $\mu\big(C(y,F,\varphi)\big)>0$ for every $y\in C(x,F,\varphi)$, hence $\varphi$ is not $\mu$-sensitive.
\end{remark}


    
 

Any CA that is sensitive with respect to a fully supported measure is also sensitive. The converse does not hold. Gilman~\cite[Section~3]{Gilman1987} described an example that is sensitive and not $\mu$-sensitive for some, but not all, Bernoulli measures~$\mu$. Here, we present a more extreme example, a sensitive CA that is not $\mu$-sensitive for any Bernoulli measure~$\mu$.

\begin{example}[Sensitive but not $\mu$-sensitive]
\label{exp:sens-but-not-mu-sens}
Consider the CA $\varphi_\pine\colon \{\symb{0},\symb{1}\}^{\ZZ} \to \{\symb{0},\symb{1}\}^{\ZZ}$, where 
\begin{align*}
    \varphi_\pine(x)_n &\isdef
        \begin{cases}
            \symb{1}        & \text{if $x_{n+1}=x_{n+2}=\symb{1}$,} \\
            \symb{0}        & \text{otherwise,}
        \end{cases}
\end{align*}
for every $x\in\{\symb{0},\symb{1}\}^\ZZ$ and $n\in\ZZ$.
We call this the \Emph{pine processionary CA} in analogy with the behaviour of the pine processionary caterpillars. In this model, the symbol~$\symb{1}$ represents a caterpillar. The caterpillars form connected chains or ``processions'' that move to the left. At each time step, the last caterpillar in the chain (the rightmost one) detaches and disappears, as if it has become lost. An illustration of a space-time diagram of this CA is shown in Figure~\ref{fig:cuncunas}.

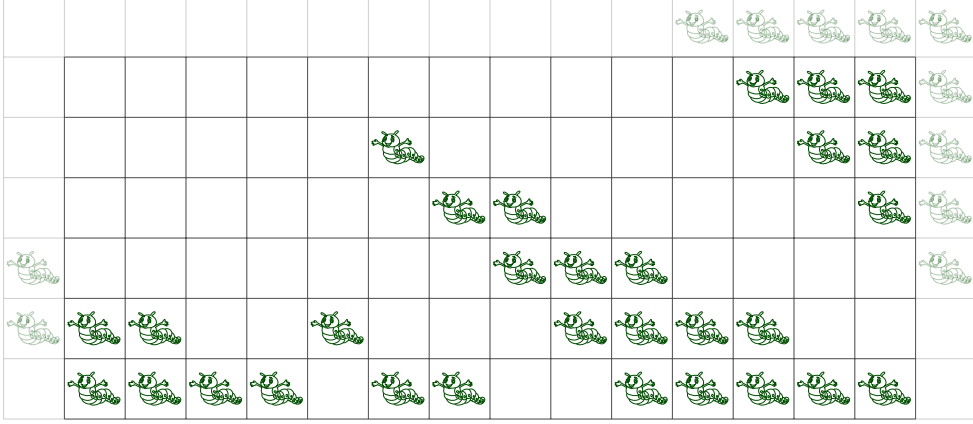
\begin{figure}[ht!]
    \centering
    \scalebox{0.8}{
    \begin{tikzpicture}
        \begin{scope}[shift = {(0.5,-0.5)}]
            \draw[opacity =0.2] (-8,0) grid (8,7);
            \draw[opacity =0.6] (-7,0) grid (7,6);
        \end{scope}
        \foreach \i in {-6,-5,-4,-3, -1,0, 3,4,5,6,7}{
            \node[opacity =1] at (\i,0) {$\scalebox{0.1}{\fcCaterpillar{0.5}{green!30!black}{2}}$};
        }
        \foreach \i in {-6,-5, -2, 2,3,4,5}{
            \node[opacity =1] at (\i,1) {$\scalebox{0.1}{\fcCaterpillar{0.5}{green!30!black}{2}}$};
        }
         \node[opacity =0.3] at (-7,1) {$\scalebox{0.1}{\fcCaterpillar{0.5}{green!30!black}{2}}$};
        \foreach \i in {1,2,3}{
            \node[opacity =1] at (\i,2) {$\scalebox{0.1}{\fcCaterpillar{0.5}{green!30!black}{2}}$};
        }
        \node[opacity =0.3] at (-7,2) {$\scalebox{0.1}{\fcCaterpillar{0.5}{green!30!black}{2}}$};
        \node[opacity =0.3] at (8,2) {$\scalebox{0.1}{\fcCaterpillar{0.5}{green!30!black}{2}}$};
        \foreach \i in { 0,1,7}{
            \node[opacity =1] at (\i,3) {$\scalebox{0.1}{\fcCaterpillar{0.5}{green!30!black}{2}}$};
        }
        \node[opacity =0.3] at (8,3) {$\scalebox{0.1}{\fcCaterpillar{0.5}{green!30!black}{2}}$};
        \foreach \i in { -1,6,7}{
            \node[opacity =1] at (\i,4) {$\scalebox{0.1}{\fcCaterpillar{0.5}{green!30!black}{2}}$};
        }
        \node[opacity =0.3] at (8,4) {$\scalebox{0.1}{\fcCaterpillar{0.5}{green!30!black}{2}}$};
        \foreach \i in { 5,6,7}{
            \node[opacity =1] at (\i,5) {$\scalebox{0.1}{\fcCaterpillar{0.5}{green!30!black}{2}}$};
        }
        \node[opacity =0.3] at (8,5) {$\scalebox{0.1}{\fcCaterpillar{0.5}{green!30!black}{2}}$};
        \foreach \i in { 4,5,6,7,8}{
            \node[opacity =0.3] at (\i,6) {$\scalebox{0.1}{\fcCaterpillar{0.5}{green!30!black}{2}}$};
        }
    \end{tikzpicture}
    }
    \caption{A section of the space-time diagram of the pine processionary CA with time going upwards. The symbol $\symb{1}$ has been replaced by a cute caterpillar and $\symb{0}$ by an empty space.}
    \label{fig:cuncunas}
\end{figure}


We first note that $\varphi_\pine$ is sensitive.  Indeed, given $x\in\{\symb{0},\symb{1}\}^\ZZ$ and $n\in\NN$, we can construct two configurations $y,z\in\{\symb{0},\symb{1}\}^\ZZ$, where
\begin{align*}
    y_k &\isdef
        \begin{cases}
            x_k         & \text{if $-n\leq k\leq n$,} \\
            \symb{0}    & \text{otherwise,}
        \end{cases}
    &
    z_k &\isdef
        \begin{cases}
            x_k         & \text{if $-n\leq k\leq n$,} \\
            \symb{1}    & \text{otherwise.}
        \end{cases}
\end{align*}
It is clear that $\varphi_\pine^{n+1}(y)_0=\symb{0}$ and $\varphi_\pine^{n+1}(z)_0=\symb{1}$.  Hence, either $\varphi_\pine^{n+1}(y)_0\neq\varphi_\pine^{n+1}(x)_0$ or $\varphi_\pine^{n+1}(z)_0\neq\varphi_\pine^{n+1}(x)_0$.  Therefore, $\varphi_\pine$ is sensitive with $F\isdef\{0\}$ as witness.

Let $0\leq q \leq 1$ and consider the Bernoulli measure $\mu_q$ with parameter $q$ on $\{\symb{0},\symb{1}\}^\ZZ$.
Let us show that $\varphi_\pine$ is not sensitive with respect to $\mu_q$. 
Clearly, $\varphi_\pine$ is not $\mu_1$-sensitive. Thus, assume $q<1$.
For $n\in \NN$ and $i \in \ZZ$, we define 
\begin{align*}
    E_n^{i} &\isdef \{x\in \{\symb{0},\symb{1}\}^{\ZZ}: \varphi_\pine^n(x)_i= \symb{1}\} \\
    &= \big\{x\in \{\symb{0},\symb{1}\}^{\ZZ}: \text{$x_j=1$ for all $j \in \{i+n,\dots,i+2n\}$}\big\} \;.
\end{align*}
Clearly $\mu_q(E_n^{i})=q^{n}$ and hence
\useshortskip
\begin{align*}
    \sum_{n=1}^{\infty}\mu_q(E_n^{i}) &< \infty \;.
\end{align*}
Using the Borel-Cantelli lemma, it follows that the measure of the set of configurations that are in infinitely many $E^i_n$'s is zero.
In other words, for almost every $x\in\{\symb{0},\symb{1}\}^\ZZ$, there exists an $m\in\NN$ such that $\varphi_\pine^n(x)_i=\symb{0}$ for all $n\geq m$.

Now, let $F\Subset \GG$.  For $m \in \NN$, we define  
\begin{align*}
    Z^F_m &\isdef
    \big\{x\in \{\symb{0},\symb{1}\}^{\ZZ}: \text{$\varphi_\pine^n(x)_i=\symb{0}$ for all $i\in F$ and $n\geq m$}\big\} \;.
\end{align*}
From the above observation, it follows that $\mu_q(Z_m^F)>0$ for some $m \in \NN$. 
Since there are only finitely many possibilities for the values $x_F,\varphi_\pine(x)_F,\ldots,\varphi_\pine^{m-1}(x)_F$, it follows that for some $w^{(0)}_F, w^{(1)}_F, \ldots, \allowbreak w^{(m-1)}_F\in\{\symb{0},\symb{1}\}^F$, the set
\begin{align*}
    D^F_m \isdef D^F_m(w^{(0)}_F,w^{(1)}_F,\ldots,w^{(m-1)}_F)
    &\isdef
    \big\{x\in Z^F_m: \text{$\varphi_\pine^n(x)_F=w^{(n)}_F$ for $0\leq n\leq m-1$}\big\}
\end{align*}
has positive measure.
For every $x\in D^F_m$, we have $C(x,F,\varphi_\pine)\supseteq D^F_m$, hence $\mu_q\big(C(x,F,\varphi_\pine)\big)> 0$.  Therefore, $\varphi_\pine$ is not $\mu_q$-sensitive.
\end{example}



\subsection{Equicontinuity}

An \Emph{equicontinuity point} of a CA $\varphi\colon A^\GG\to A^\GG$ is a configuration $x\in A^\GG$ such that, for every $F\Subset\GG$, there exists $E\Subset\GG$ such that every $y\in A^\GG$ with $y_E=x_E$ satisfies $\varphi^t(y)_F=\varphi^t(x)_F$ for all $t\geq 0$.
The latter condition is equivalent to the equicontinuity of the family $(\varphi^n)_{n\in\NN}$ at $x$ in the sense of analysis, hence the name.

We say that a CA $\varphi$ is \Emph{almost equicontinuous} if its equicontinuity points form a residual subset (i.e. it contains a countable intersection of dense open subsets) of~$A^\GG$.  

\begin{proposition}[Characterization of almost equicontinuity]
\label{prop:charac_almost}
    A CA $\varphi\colon A^{\GG}\to A^{\GG}$ is almost equicontinuous if and only if the set of configurations $x\in A^{\GG}$ for which $C(x,F,\varphi)$ has non-empty interior for every $F\Subset \GG$ is residual.
\end{proposition}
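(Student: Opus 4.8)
The plan is to show the two defining conditions describe the same set, up to a residual-set equivalence, by relating the equicontinuity points of $\varphi$ to the configurations $x$ for which every $C(x,F,\varphi)$ has non-empty interior. First I would record the easy implication: if $x$ is an equicontinuity point, then for each $F\Subset\GG$ there is $E\Subset\GG$ with $[x_E]\subseteq C(x,F,\varphi)$, so $C(x,F,\varphi)$ contains a cylinder and hence has non-empty interior. Thus the set of equicontinuity points is contained in the set $\mathcal{E}\isdef\{x : \text{$C(x,F,\varphi)$ has non-empty interior for all $F\Subset\GG$}\}$. So one direction is immediate: if $\mathcal{E}$ is not residual, neither is the (smaller) set of equicontinuity points, and $\varphi$ is not almost equicontinuous.

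For the converse, suppose $\mathcal{E}$ is residual; I want to conclude $\varphi$ is almost equicontinuous. The key point is that $\mathcal{E}$ is \emph{almost} the set of equicontinuity points — the discrepancy is that $C(x,F,\varphi)$ having non-empty interior only gives a cylinder inside it, not necessarily one containing $x$. To fix this, fix an exhaustion $F_1\subseteq F_2\subseteq\cdots$ of $\GG$ by finite sets with $\bigcup_k F_k=\GG$, and for each $k$ define
\begin{align*}
    U_k &\isdef \{x\in A^\GG : \text{$C(x,F_k,\varphi)$ contains a non-empty cylinder}\}
        = \bigcup_{w}\;\Interior\big(C(x_w,F_k,\varphi)\big),
\end{align*}
where the union is over patterns $w$ such that $[w]\subseteq C(y,F_k,\varphi)$ for some (equivalently every, by Observation~\ref{obs:stability-set}) $y\in[w]$. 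Each $U_k$ is open by construction. I would then argue $U_k$ is also dense: given any cylinder $[v]$, pick any $y\in[v]$; since the $C(\cdot,F_k,\varphi)$-classes partition $A^\GG$ (Observation~\ref{obs:stability-set}) and, on the residual set $\mathcal{E}$, infinitely many of them have non-empty interior, a Baire-category/counting argument inside $[v]$ produces a point $x\in[v]$ whose class $C(x,F_k,\varphi)$ has non-empty interior — more directly, $\mathcal{E}\cap[v]\ne\emptyset$ since $\mathcal{E}$ is residual hence dense, and for $x\in\mathcal{E}$ the class $C(x,F_k,\varphi)$ has non-empty interior, so $x$ (being arbitrarily close to a point of this open subset of its own class, by shifting coordinates outside a large finite set) can be perturbed into $U_k\cap[v]$. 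Hence each $U_k$ is dense open, so $\bigcap_k U_k$ is residual, and I would check that every $x\in\bigcap_k U_k$ for which additionally $x\in\mathcal{E}$ — still a residual set — is an equicontinuity point: given $F\Subset\GG$, choose $k$ with $F\subseteq F_k$; then $C(x,F_k,\varphi)$ has non-empty interior, say $[w]\subseteq C(x,F_k,\varphi)$ with $x\in[w]$ (this is the role of $U_k$ together with membership in the class), so with $E\isdef\dom(w)$ we get $y_E=x_E\implies y\in C(x,F_k,\varphi)\subseteq C(x,F,\varphi)$, i.e.\ $\varphi^t(y)_F=\varphi^t(x)_F$ for all $t\ge0$.

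The main obstacle I anticipate is precisely this gap between ``$C(x,F,\varphi)$ has non-empty interior'' and ``$C(x,F,\varphi)$ contains a neighbourhood of $x$''; the content of the argument is showing that the set of $x$ failing the stronger property is meagre, which is exactly where the Baire-category structure (each $U_k$ dense open) is needed, and where Observation~\ref{obs:stability-set} — that the $C(\cdot,F,\varphi)$ form a partition, so if one point of a class is interior then one may try to ``slide'' $x$ into the interior by modifying it far away — does the real work. I would be slightly careful that sliding $x$ toward a point of $\Interior(C(x,F_k,\varphi))$ keeps it in the \emph{same} class; this is automatic since the interior point $z$ satisfies $C(z,F_k,\varphi)=C(x,F_k,\varphi)$ by Observation~\ref{obs:stability-set}, and $z$ can be chosen to agree with $x$ on any prescribed finite set (as $C(x,F_k,\varphi)$ is itself a union of classes that is $\varphi$-orbit-determined only on $F_k$, hence ``thick'' in the other coordinates). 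Cleaning up this density argument is the crux; everything else is bookkeeping with the exhaustion and the definition of equicontinuity point.
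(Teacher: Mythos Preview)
The gap you flag in your final paragraph is real, and your proposed fix does not close it. Your two descriptions of $U_k$ are genuinely different sets: the first is $\{x:\interior{C}(x,F_k,\varphi)\neq\varnothing\}$, while the second equals (via Observation~\ref{obs:stability-set}) the set $\{x:x\in\interior{C}(x,F_k,\varphi)\}$. A configuration whose stability class has non-empty interior need not itself lie in that interior. For a concrete instance, take the CA on $\{\symb{0},\symb{1},\symb{B}\}^{\ZZ}$ in which $\symb{B}$ is fixed and a non-$\symb{B}$ cell becomes $\symb{0}$ if its right neighbour is $\symb{B}$ and otherwise copies that neighbour: the all-$\symb{0}$ configuration has stability class (for $F=\{0\}$) containing the cylinder $\{x:x_0=\symb{0},\,x_1=\symb{B}\}$, yet is not interior to that class, since placing a single $\symb{1}$ far to the right eventually reaches the origin. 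Your ``sliding'' justification --- that an interior point $z$ of $C(x,F_k,\varphi)$ can be chosen to agree with $x$ on any prescribed finite set because the class is ``thick in the other coordinates'' --- is unsupported: membership in $C(x,F_k,\varphi)$ constrains the configuration through the entire forward orbit, hence at sites arbitrarily far from any finite window, and you give no argument that the interior is dense in the class.

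The missing ingredient is a counting step. Since $A^\GG$ is second countable and the stability classes are pairwise disjoint, at most countably many of them have non-empty interior (their interiors are disjoint non-empty open sets). For each such class $C$ the set $C\setminus\interior{C}$ is closed with empty interior, hence nowhere dense; so the difference between your two descriptions of $U_k$ is meagre. Taking $U_k\isdef\{x:x\in\interior{C}(x,F_k,\varphi)\}$ (the second description, which \emph{is} open), the hypothesis that $\mathcal{E}$ is residual forces each $U_k$ to be residual, and your Baire argument then finishes cleanly. For comparison, the paper's proof is much shorter: it argues via a three-term chain of inclusions (the last using Observation~\ref{obs:stability-set}) that the two sets $\{x:\interior{C}(x,F,\varphi)\neq\varnothing\}$ and $\{x:x\in\interior{C}(x,F,\varphi)\}$ are \emph{equal} for every $F$, so that $\mathcal{E}$ literally coincides with the set of equicontinuity points. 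The middle inclusion in that chain is precisely the identification you were right to be suspicious of.
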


\begin{proof}
    First, observe that a configuration $x\in A^{\GG}$ is an equicontinuity point of $\varphi$ if and only if for every $F\Subset\GG$, the set $C(x,F,\varphi)$ has $x$ in its interior.
    
    Let $F\Subset\GG$ be fixed.
    Recall from Observation~\ref{obs:stability-set} that the family $\family{C}\isdef\{C(x,F,\varphi): x\in A^\GG\}$ partitions~$A^\GG$.  Let $\family{D}\isdef\{D\in \family{C}: \interior{D}\neq\varnothing\}$, where $\interior{D}$ denotes the interior of~$D$.
    Note that every element of~$\family{C}$ is closed.
    Let
    \begin{align*}
        E_F &\isdef \{x\in A^{\GG}: \interior{C}(x,F,\varphi)\ni x\} =
            \bigcup_{D\in\family{D}}\interior{D} \;, \\
        E'_F &\isdef \{x\in A^{\GG}: \interior{C}(x,F,\varphi)\neq\varnothing\} =
            \bigcup_{D\in\family{D}} D 
            = E_F \cup \bigcup_{D\in\family{D}}\partial D \;,
    \end{align*}
    where $\partial D$ denotes the boundary of~$D$.
    A topological space that has a countable basis can have at most countably many disjoint open sets, hence $\family{D}$ is at most countable.
    Since the boundary of every closed set is nowhere dense, it follows that $\bigcup_{D\in\family{D}}\partial D$ is a meagre set.
    Therefore, $E_F$ is residual if and only if $E'_F$ is.
    Intersecting over all $F\Subset\GG$, we obtain that $\varphi$ is almost equicontinuous if and only if $\bigcap_{F\Subset\GG} E'_F$ is residual.
    This proves the claim.
\end{proof}

Gilman proposed a measurable analogue of almost equicontinuity for CA on~$\ZZ$ in terms of density points~\cite{Gilman1987}. We present an alternative definition, that naturally aligns with Proposition~\ref{prop:charac_almost}, and we demonstrate its equivalence to Gilman's definition. 

\begin{definition}[Equicontinuity w.r.t.\ a probability measure]
\label{def:mueq}
    Let $\varphi\colon A^{\GG}\to A^{\GG}$ be a CA and $\mu$ a probability measure on $A^{\GG}$. We say that $\varphi$ is \Emph{equicontinuous with respect to $\mu$} (\Emph{$\mu$-equicontinuous} for short) if for  $\mu$-almost every $x \in A^{\GG}$, we have $\mu(C(x,F,\varphi))>0$ for every $F\Subset\GG$.
\end{definition}

\begin{remark}
\label{rem:mueq:quantifiers}
Since the family of finite subsets of $\GG$ is countable,
the order of quantifiers in the definition of $\mu$-equicontinuity can be reversed; that is, a cellular automaton $\varphi\colon A^{\GG}\to A^{\GG}$ is $\mu$-equicontinuous if and only if, for every finite set $F \Subset \GG$, we have $\mu(C(x,F,\varphi)) > 0$ for $\mu$-almost every~$x \in A^{\GG}$.
\end{remark}

A chain $J_1\subseteq J_2\subseteq\cdots$ of finite subsets of $\GG$ is said to be \Emph{co-final} if $\bigcup_{n=1}^\infty J_n=\GG$.  For instance, the balls $B_n(e)$ in the Cayley graph of a finitely generated group $\GG$ form a co-final chain.
Let $\mu$ be a probability measure on~$A^\GG$, $E\subseteq A^\GG$ a measurable set, and $(J_n)_{n\in \NN}$ a co-final chain of finite subsets of~$\GG$.  We say that a configuration $x\in A^\GG$ is a \Emph{point of $\mu$-density of $E$ with respect to~$(J_n)_{n\in\NN}$} if $x$ is in the topological support of~$\mu$ and
\begin{align*}
    \lim_{n\to\infty} \mu\big(E \given[\big] [x_{J_n}]\big) = \lim_{n\to\infty} \frac{\mu(E\cap [x_{J_n}])}{\mu([x_{J_n}])} &= 1 \;.
\end{align*}
%
%
%
%
The following theorem is the analogue of Lebesgue's density theorem for the Cantor set. 
It follows directly from Levy's zero-one law~\cite[Theorem~5.5.8]{Durrett2010}.
\begin{knowntheorem}[Points of density]
\label{thm:lebesgue-density:Cantor}
    Let 
    $(J_n)_{n\in\NN}$ be a co-final chain of finite subsets of~$\GG$.
    Let $\mu$ be a probability measure on $A^\GG$ and $E\subseteq A^\GG$ a measurable set.  Then, $\mu$-almost every configuration in $E$ is a point of $\mu$-density of~$E$ with respect to~$(J_n)_{n\in\NN}$.
\end{knowntheorem}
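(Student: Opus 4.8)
The plan is to obtain this ``Lebesgue density theorem for the Cantor set'' directly from L\'evy's upward martingale convergence theorem (L\'evy's zero-one law), applied to the indicator function $\indicator{E}$ along the filtration generated by the windows $J_n$.

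First I would fix the filtration. For each $n\in\NN$, let $\mathcal{F}_n$ denote the finite $\sigma$-algebra on $A^\GG$ generated by the coordinate maps $x\mapsto x_g$ for $g\in J_n$; its atoms are exactly the cylinders $[w]$ with $w\in A^{J_n}$. The inclusions $J_1\subseteq J_2\subseteq\cdots$ give $\mathcal{F}_1\subseteq\mathcal{F}_2\subseteq\cdots$, and co-finality means $\bigcup_n J_n=\GG$. Since every cylinder is determined by finitely many coordinates and the cylinders form a basis of the topology, each cylinder lies in some $\mathcal{F}_n$; hence $\mathcal{F}_\infty\isdef\sigma\big(\bigcup_n\mathcal{F}_n\big)$ is the full Borel $\sigma$-algebra of $A^\GG$, and in particular $E\in\mathcal{F}_\infty$.

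Next I would identify the conditional expectations explicitly. The function $\indicator{E}$ is bounded, hence $\mu$-integrable, and the function
\[
    x\longmapsto \frac{\mu\big(E\cap[x_{J_n}]\big)}{\mu\big([x_{J_n}]\big)}
    = \mu\big(E \given[\big] [x_{J_n}]\big),
\]
defined on the $\mu$-conull set $\{x:\mu([x_{J_n}])>0\}$ and set to $0$ elsewhere, is a version of $\EExp_\mu\big[\indicator{E}\,\big|\,\mathcal{F}_n\big]$: it is $\mathcal{F}_n$-measurable (constant on atoms) and integrates to $\mu(E\cap[w])=\int_{[w]}\indicator{E}\,\dd\mu$ over each atom $[w]$. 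By L\'evy's zero-one law~\cite[Theorem~5.5.8]{Durrett2010}, $\EExp_\mu\big[\indicator{E}\,\big|\,\mathcal{F}_n\big]\to\EExp_\mu\big[\indicator{E}\,\big|\,\mathcal{F}_\infty\big]=\indicator{E}$ $\mu$-almost surely as $n\to\infty$, since $E\in\mathcal{F}_\infty$. Consequently, for $\mu$-almost every $x\in E$ we have $\mu\big(E\cap[x_{J_n}]\big)/\mu\big([x_{J_n}]\big)\to 1$.

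Finally I would deal with the ``$x\in\supp\mu$'' clause in the definition of a point of $\mu$-density. Since $\GG$ is countable, $A^\GG$ is compact and metrizable, hence second countable, so $A^\GG\setminus\supp\mu$ is a countable union of $\mu$-null basic open sets and therefore $\mu$-null; thus $\mu$-almost every $x$, and in particular $\mu$-almost every $x\in E$, lies in $\supp\mu$. For such an $x$ every cylinder $[x_{J_n}]$ is an open neighbourhood of $x$ and so has positive $\mu$-measure, which makes the ratios above genuinely defined for all $n$; the convergence to $1$ established in the previous step is then exactly the statement that $x$ is a point of $\mu$-density of $E$ with respect to $(J_n)_{n\in\NN}$. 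Intersecting the two $\mu$-conull sets — the points of $\supp\mu$ and the points at which the martingale convergence holds — finishes the proof. There is no real obstacle here; the only step requiring minor care is the bookkeeping of the $\mu$-null cylinders when matching the explicit conditional probabilities with the abstract conditional expectation $\EExp_\mu\big[\indicator{E}\mid\mathcal{F}_n\big]$.
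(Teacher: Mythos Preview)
Your proof is correct and is exactly the approach the paper indicates: the paper does not give a detailed argument but simply says the theorem ``follows directly from Levy's zero-one law~\cite[Theorem~5.5.8]{Durrett2010}'', and your filtration/martingale setup is the standard way to unpack that one-line justification.
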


The following proposition shows that $\mu$-equicontinuity can be equivalently defined in terms of density points with respect to a co-final chain~$(J_n)_{n\in\NN}$.

\begin{proposition}[Characterization of $\mu$-equicontinuity]
\label{prop:mu-equicontinuity}
    Let $(J_n)_{n\in\NN}$ be a co-final chain of finite subsets of~$\GG$.
    Let 
    $\varphi\colon A^{\GG}\to A^{\GG}$ be a CA and $\mu$ a probability measure on $A^{\GG}$. Then, $\varphi$ is $\mu$-equicontinuous if and only if
    for $\mu$-almost every $x\in A^\GG$ and every $F\Subset\GG$, we have
    \begin{align*}
        \lim_{n\to\infty} \mu\big(C(x,F,\varphi) \given[\big] [x_{J_n}]\big) &= 1 \;,
    \end{align*}
    that is, $x$ is a point of $\mu$-density of $C(x,F,\varphi)$ with respect to $(J_n)_{n\in\NN}$.
\end{proposition}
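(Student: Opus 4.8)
The plan is to prove the two implications separately, using Theorem~\ref{thm:lebesgue-density:Cantor} (the density theorem) as the main bridge between the qualitative statement ``$\mu(C(x,F,\varphi))>0$'' and the quantitative statement ``$\lim_n \mu(C(x,F,\varphi)\mid[x_{J_n}])=1$''.

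For the ``if'' direction, suppose that for $\mu$-almost every $x$ and every $F\Subset\GG$ we have $\lim_{n\to\infty}\mu\big(C(x,F,\varphi)\mid[x_{J_n}]\big)=1$. In particular, for such $x$ and $F$ the limit is positive, so $\mu\big(C(x,F,\varphi)\cap[x_{J_n}]\big)>0$ for all large $n$; since $C(x,F,\varphi)\cap[x_{J_n}]\subseteq C(x,F,\varphi)$, this forces $\mu(C(x,F,\varphi))>0$. Because this holds for $\mu$-almost every $x$ and every $F\Subset\GG$, the CA $\varphi$ is $\mu$-equicontinuous by Definition~\ref{def:mueq}. (One only needs here that the density-point hypothesis implicitly requires $x$ to lie in the support of $\mu$, so that the conditional probabilities $\mu(\,\cdot\mid[x_{J_n}])$ are well defined; this is part of the definition of a point of $\mu$-density.)

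For the ``only if'' direction, suppose $\varphi$ is $\mu$-equicontinuous. Fix a finite set $F\Subset\GG$, and recall from Observation~\ref{obs:stability-set} that the sets $C(x,F,\varphi)$, as $x$ ranges over $A^\GG$, partition $A^\GG$ into (at most countably many) disjoint measurable classes. Let $\{E_k\}_{k}$ enumerate the distinct classes among $\{C(x,F,\varphi):x\in A^\GG\}$ that have positive $\mu$-measure, and let $N\isdef A^\GG\setminus\bigcup_k E_k$ be the union of the null classes. By Remark~\ref{rem:mueq:quantifiers}, $\mu$-equicontinuity gives $\mu(N)=0$, so $\{E_k\}_k$ is a countable measurable partition of $A^\GG$ up to a null set. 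Now apply Theorem~\ref{thm:lebesgue-density:Cantor} to each $E_k$ with the given co-final chain $(J_n)_{n\in\NN}$: for each $k$, $\mu$-almost every point of $E_k$ is a point of $\mu$-density of $E_k$, i.e.\ $\lim_{n\to\infty}\mu\big(E_k\mid[x_{J_n}]\big)=1$ for $\mu$-a.e.\ $x\in E_k$. Taking the union over the countably many $k$ and discarding the null set $N$, we conclude that for $\mu$-almost every $x$, writing $E_{k(x)}$ for the class containing $x$, we have $E_{k(x)}=C(x,F,\varphi)$ and $\lim_{n\to\infty}\mu\big(C(x,F,\varphi)\mid[x_{J_n}]\big)=1$. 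Finally, since the collection of finite subsets of $\GG$ is countable, we may intersect the full-measure sets obtained for each $F\Subset\GG$ to get a single full-measure set on which the density condition holds simultaneously for all $F$; this is exactly the asserted characterization.

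The only genuinely delicate point is the bookkeeping in the ``only if'' direction: one must notice that, for a fixed $F$, the stability sets form a \emph{countable} measurable partition (countability follows from the fact that each $C(x,F,\varphi)$ is determined by the sequence of finite patterns $(\varphi^n(x)_F)_{n\geq0}$, and distinct classes are disjoint, so a positive-measure class must have measure bounded below along any fixed enumeration — more simply, a measure space admits at most countably many disjoint positive-measure sets). Once this is in hand, the density theorem applies class-by-class and everything else is routine. A minor subtlety is checking that a $\mu$-density point of $E_k$ automatically lies in $\supp\mu$ (needed so that the conditional probabilities make sense), but this is immediate since $\mu([x_{J_n}])>0$ for all $n$ whenever $\lim_n\mu(E_k\mid[x_{J_n}])$ exists, and these cylinders form a neighborhood basis at $x$.
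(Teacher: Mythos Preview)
Your proof is correct and follows essentially the same route as the paper's: for the forward implication you fix $F$, use Observation~\ref{obs:stability-set} to get a partition, restrict to the (at most countably many) positive-measure classes, apply Theorem~\ref{thm:lebesgue-density:Cantor} to each, and then intersect over the countably many $F$; for the converse you observe that a null set has no density points. One small imprecision: the parenthetical claim that the full partition $\{C(x,F,\varphi):x\in A^\GG\}$ has ``at most countably many'' classes is not justified (and is not needed)---only the positive-measure classes are countable, which is exactly what you use and what you correctly justify in your final paragraph.
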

\begin{proof}
    Suppose that for $\mu$-almost every~$x\in A^\GG$ and every $F\Subset\GG$, we have $\mu\big(C(x,F,\varphi)\big)>0$.
    Let $F\Subset\GG$ be fixed.
    Recall from Observation~\ref{obs:stability-set} that the family $\family{C}\isdef\{C(x,F,\varphi): x\in A^\GG\}$ partitions~$A^\GG$.  Let $\family{D}\isdef\{D\in \family{C}: \mu(D)>0\}$.
    A probability space can have at most countably many disjoint sets of positive measure, hence $\family{D}$ is at most countable.
    For $D\in\family{D}$, let
    \begin{align*}
        \widetilde{D} &\isdef \big\{x\in D: \text{$x$ is a point of $\mu$-density of~$C(x,F,\varphi)=D$ with respect to $(J_n)_{n\in\NN}$}\big\} \;.
    \end{align*}
    By Theorem~\ref{thm:lebesgue-density:Cantor},
    $\mu(\widetilde{D})=\mu(D)$ for each $D$.
    Therefore,
    \begin{align*}
        \mu\Big(\bigcup_{D\in\family{D}}\widetilde{D}\Big) &= \sum_{D\in\family{D}}\mu(\widetilde{D})
        =
        \sum_{D\in\family{D}}\mu(D)
        =
        \mu\Big(\bigcup_{D\in\family{D}}D\Big)
        = 1 \;.
    \end{align*}
    In other words, the set $M_F$ of configurations $x\in A^\GG$ such that $x$ is a point of $\mu$-density of~$C(x,F,\varphi)$ 
    has $\mu$-measure~$1$.
    Since the family of finite subsets of $\GG$ is countable, we conclude that $\mu\big(\bigcap_{F\Subset\GG}M_F\big)=1$, proving the forward implication.
    %

    The converse follows from the fact that, by definition, a null set cannot have a point of density.
\end{proof}

One can also characterize $\mu$-equicontinuity using a condition obtained from Lusin's characterization of Borel functions~\cite{GarciaRamos2017}.

The following example shows that a CA that is equicontinuous with respect to a Bernoulli measure need not be almost equicontinuous.
\begin{example}[$\mu$-equicontinuous but not almost equicontinuous]
Consider the pine processionary CA~$\varphi_\pine$ of Example~\ref{exp:sens-but-not-mu-sens} and a Bernoulli measure~$\mu_q$.
The argument used to show the sensitivity of~$\varphi_\pine$ also shows that $\varphi_\pine$ has no equicontinuity points.
On the other hand, since $\varphi_\pine$ is not $\mu_q$-sensitive, by Gilman's theorem (Theorem~\ref{thm:gilman}), it has to be $\mu_q$-equicontinuous.
\end{example}


\section{Gilman's dichotomy}
\label{sec:Gilmandichotomy}

From the definitions, it is clear that a CA cannot be both sensitive and equicontinuous with respect to a measure~$\mu$.
Inspired by Gilman's result (Theorem~\ref{thm:gilman}), we introduce the following terminology.

\begin{definition}
    We say that a group $\GG$ satisfies \Emph{Gilman's dichotomy} if for every finite set $A$ and every Bernoulli measure $\mu$ on $A^\GG$, every CA $\varphi\colon A^{\GG}\to A^{\GG}$ is either $\mu$-sensitive or $\mu$-equicontinuous.
\end{definition}

As mentioned in the introduction, the objective of this paper is to characterize which groups satisfy Gilman's dichotomy.

\subsection{Generalization of Gilman's dichotomy to virtually $\ZZ$ groups}

We begin by extending Gilman's result to virtually $\ZZ$ groups. In fact, we will show that, on such groups, Gilman's dichotomy holds not only for Bernoulli measures, but for all $\GG$-ergodic measures.
Our proof relies on the well-known Freudenthal-Hopf characterization of virtually $\ZZ$ groups as those that have two ends (see for instance~\cite[Corollary 11.34]{Meier2008}).
This means that for every generating set $S$ of $\GG$, if we remove a sufficiently large ball from $\cayley(\GG,S)$, the resulting graph contains exactly two infinite connected components. 

\begin{lemma}\label{lema:tecnicotrivial}
    Let $\GG$ be a virtually~$\ZZ$ group and $K\Subset\GG$.
    Let $h\in\GG$ be a non-torsion element of~$\GG$ and let $W\Subset \GG$ be a finite set such that $\generatedby{h} W = \GG$.
    
    There exists a set $F_0\Subset\GG$ such that for every $F\Subset\GG$, there exist $n_0\in\NN$ such that for all $n,m> n_0$ and $u,v \in W$, the set $\GG\setminus(h^{-n}uF_0\cup h^mv F_0)$ can be partitioned into two sets $\GG_\xint$ and $\GG_\xext$ such that \begin{enumerate}[label={\textup{(\roman*)}}]
        \item \label{item:tecnicotrivial:finite-xint}
            $\GG_\xint$ is finite,
        \item \label{item:tecnicotrivial:no-info-leak}
            $\GG_\xint K\cap\GG_\xext=\varnothing$, and
        \item \label{item:tecnicotrivial:F-contained}
            $\GG_\xext \cap F = \varnothing$.
    \end{enumerate}
\end{lemma}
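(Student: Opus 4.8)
The guiding idea is that a virtually~$\ZZ$ group is essentially one-dimensional: by hypothesis every element lies within a bounded distance of the bi-infinite line $\generatedby{h}$, so the two blobs, once pushed far apart along this line, act as a pair of fences that trap a finite region $\GG_\xint$ in the middle and leave the two ends of the group in $\GG_\xext$. To set this up I would fix a finite symmetric generating set $S$, write $d(g,g')\isdef\abs{g^{-1}g'}_S$ for the word metric, and put $M\isdef\max_{w\in W}\abs{w}_S$. Since $\generatedby{h}W=\GG$, every $g\in\GG$ equals $h^kw$ for some $k\in\ZZ$ and $w\in W$, and then $d(g,h^k)=\abs{w}_S\le M$; I call any $k$ with $d(g,h^k)\le M$ a \emph{position} of $g$, so every element has at least one position. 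Local finiteness of $\cayley(\GG,S)$ then supplies two uniform bounds: since $h$ is non-torsion, the $h^i$ are pairwise distinct, so $\{i\in\ZZ:\abs{h^i}_S\le r\}$ is finite for every $r$; taking $r=2M$ shows that any two positions of a single element differ by at most a constant $\kappa_0$, and taking $r=2M+t$ shows that elements $g,g'$ with $d(g,g')\le t$ have positions differing by at most a constant $\kappa(t)$. We also have $\abs{h^i}_S\le\abs{i}\,\abs{h}_S$ for free.

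Next I would pin down the fence. Let $T_0\isdef\max_{s\in K}\abs{s}_S$, choose an integer $J>\max\{\kappa_0,\kappa(T_0)\}$, and set $R\isdef 2M+J\abs{h}_S$ and $F_0\isdef B_R$; note that $F_0$ depends only on $\GG$, $S$, $h$, $W$, $K$, as the statement demands. The one computation I would actually carry out is the inclusion
\[
    h^{-n}uF_0 = B_R(h^{-n}u) \supseteq \bigcup_{\abs{j}\le J} B_M\big(h^{-n+j}\big),
\]
valid for every $n$ and every $u\in W$; it is immediate from $d(h^{-n}u,h^{-n})\le M$, $d(h^{-n},h^{-n+j})=\abs{h^j}_S\le J\abs{h}_S$, and the triangle inequality, and the symmetric inclusion holds with $h^{-n}u$ replaced by $h^mv$ and the segment replaced by $\{h^{m+j}:\abs{j}\le J\}$. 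The payoff is that any $g$ lying outside \emph{both} blobs has no position in either ``fence interval'' $[-n-J,-n+J]$ or $[m-J,m+J]$.

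Now I would fix $F\Subset\GG$, set $\rho_F\isdef\max_{g\in F}\abs{g}_S$ and $P_F\isdef\max\{\abs{i}:\abs{h^i}_S\le\rho_F+M\}$ (finite by local finiteness), and take $n_0\isdef P_F+J$. Fix $n,m>n_0$ and $u,v\in W$. Because the positions of a single element form a set of diameter $\le\kappa_0<2J$, while the three intervals $(-\infty,-n-J)$, $(-n+J,\,m-J)$, $(m+J,\infty)$ are separated by the two fence intervals, each of width $2J$, every $g$ outside the blobs has \emph{all} of its positions in exactly one of these three intervals. I then let $\GG_\xint$ consist of the $g$ whose positions lie in the middle interval and $\GG_\xext$ of the $g$ whose positions lie in the two outer intervals; these partition $\GG\setminus(h^{-n}uF_0\cup h^mvF_0)$. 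Finally I would verify the three clauses. For \emph{(i)}: if $g\in\GG_\xint$ and $k$ is a position of $g$ then $-n<k<m$, so $d(g,e)\le M+\abs{h}_S\max(n,m)$ and hence $\GG_\xint$ is finite. For \emph{(ii)}: if $g\in\GG_\xint$, $s\in K$, and $gs\in\GG_\xext$, then a position of $g$ and a position of $gs$ lie in different intervals and so differ by more than $2J$, whereas $d(g,gs)=\abs{s}_S\le T_0$ forces them to differ by at most $\kappa(T_0)<2J$ --- a contradiction, so $\GG_\xint K\cap\GG_\xext=\varnothing$. For \emph{(iii)}: every $g\in F$ has all of its positions in $[-P_F,P_F]$, and $[-P_F,P_F]\subseteq(-n+J,\,m-J)$ since $n,m>P_F+J$; such a $g$ is therefore either contained in one of the two blobs or lies in $\GG_\xint$, so $\GG_\xext\cap F=\varnothing$.

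I expect the only genuine difficulty to be the bookkeeping forced by the coarseness of the position assignment: the fence width $J$ must be chosen large enough to dominate \emph{simultaneously} the ambiguity $\kappa_0$ in the position of a single element and the maximal displacement $\kappa(T_0)$ produced by right multiplication by an element of $K$. Once $J$ is fixed in this way, the whole argument reduces to the triangle inequality together with the finiteness of balls --- which is precisely where the hypothesis that $\generatedby{h}$ has finite index (equivalently, that $\GG$ has two ends) is used.
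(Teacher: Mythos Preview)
Your proof is correct and takes a genuinely different route from the paper's. The paper invokes the two-ends characterisation of virtually~$\ZZ$ groups directly: it chooses the generating set $S$ with $K\subseteq S$, picks a radius $r$ such that removing $B_r$ from $\cayley(\GG,S)$ leaves exactly two infinite connected components $\GG_\ell,\GG_r$, and sets $F_0\isdef\bigcup_{w\in W}w^{-1}B_r$, so that each translated blob $h^{-n}uF_0$ contains the disconnecting ball $h^{-n}B_r$; the sets $\GG_\xint$ and $\GG_\xext$ are then literally built out of connected components of the complement, and property~(ii) becomes immediate because $K\subseteq S$ forces every element of $\GG_\xint K$ to be a graph-neighbour of $\GG_\xint$, hence unable to reach another component without passing through a removed blob. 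Your argument instead constructs a coarse projection onto~$\ZZ$ (the ``position'' assignment) and performs the separation arithmetically, with the fence width $J$ chosen to dominate both the ambiguity $\kappa_0$ of that projection and the displacement $\kappa(T_0)$ caused by right multiplication by elements of~$K$. Your version is more self-contained --- it does not appeal to Freudenthal--Hopf and does not need to absorb $K$ into the generating set --- and makes all constants explicit; the paper's version is more topological and lets graph-connectivity do the work that your $\kappa$-bounds do by hand.
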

\begin{proof}
    Fix a finite set $S$ of generators for $\GG$ such that $K \subseteq S$.  As usual, let $B_n \isdef B_n(e)$ be the centered ball of radius $n$ in $\cayley(\GG,S)$. Let $r\geq 1$ be such that $W\subseteq B_r$ and such that after removing all vertices in $B_{r}$ the graph $\cayley(\GG,S)$ has two infinite connected components $\GG_{\ell}$ and $\GG_{r}$. 

    Notice that there exists some $t_0$ such that for every $t\geq t_0$, then $h^{-t}B_r$ and $h^{t}B_r$ are contained in the two distinct infinite connected components. Up to renaming the sets, we have $h^{-t}B_r \subseteq \GG_{\ell}$ and $h^tB_r \subseteq \GG_{r}$.

    We define $F_0 = \bigcup_{w \in W} w^{-1} B_r$. Now fix $F\Subset \GG$. As $W \subseteq B_r$, there exists $\kappa_0 \in \ZZ$ such that
    \begin{align*}
        F &\subseteq \bigcup_{i = -\kappa_0}^{\kappa_0} h^{-i}B_r \;.
    \end{align*}
    Now let $n_0 \isdef t_0+\kappa_0$, and consider $n,m > n_0$ and $u,v \in W$. By the construction of $F_0$, we have $h^{-n}B_r \subseteq h^{-n}uF_0$ and $h^mB_r \subseteq h^mvF_0$. In particular, $\GG\setminus h^{-n}uF_0$ has two infinite connected components, which we call $\GG^{-n}_{\ell}$ and $\GG^{-n}_{r}$, and they satisfy
    \begin{align*}
        \GG^{-n}_{\ell} \subseteq h^{-n}\GG_{\ell}
        \qquad\text{and}\qquad
        \GG^{-n}_{r} \subseteq h^{-n}\GG_{r} \;.
    \end{align*}
    Similarly,  $\GG\setminus h^mvB_r$ has two infinite connected components $\GG^m_{\ell}$ and $\GG^m_{r}$, and they satisfy
    \begin{align*}
        \GG^m_{\ell} \subseteq h^m\GG_{\ell}
        \qquad\text{and}\qquad
        \GG^m_{r} \subseteq h^m\GG_{r} \;.
    \end{align*}
    We take $\GG_{\xext} \isdef (\GG^{-n}_{\ell} \cap \GG^m_{\ell}) \cup (\GG^{-n}_{r}\cap\GG^m_{r} )$ and notice that its complement in $\GG$ is a finite set. Finally, set $\GG_{\xint}$ such that $\GG_{\xint}$, $\GG_{\xext}$ and $(h^{-n}uF_0\cup h^mv F_0)$ partition $\GG$.
    
    Now, property~\ref{item:tecnicotrivial:finite-xint} is direct from the definition of $\GG_{\xint}$. Property~\ref{item:tecnicotrivial:no-info-leak} follows from the requirement that $K\subseteq S$. Finally, as $n,m \geq n_0$, we have 
    \begin{align*}
        h^{i}B_r \subseteq h^{-n}\GG_{r}
        \qquad\text{and}\qquad
        h^iB_r \subseteq h^m\GG_{\ell}
    \end{align*}
    for every $i \in \{-\kappa_0,\dots,\kappa_0\}$.
    In particular, this implies that
    \begin{align*}
        h^{i}B_r \cap \GG^{-n}_{\ell} \subseteq h^{i}B_r \cap h^{-n} \GG_{\ell} = \varnothing
        \qquad\text{and}\qquad
        h^iB_r \cap \GG^m_{r} \subseteq h^iB_r \cap h^m\GG_{r} = \varnothing \;,
    \end{align*}
    from which we obtain~\ref{item:tecnicotrivial:F-contained}.
\end{proof}

The second ingredient of our proof is the Poincaré recurrence theorem for a single measure-preserving transformation~\cite[Theorem~1.4]{Walters1982}. (For other versions regarding group actions, see~\cite[Theorem 2.10]{KL2017}).
We give a short proof for the sake of completeness.



\begin{knowntheorem}[Poincaré recurrence theorem]
\label{thm:poincare}
	Let $\psi:X\to X$ be a measurable map on a measurable space~$X$ and let $\mu$ be a $\psi$-invariant probability measure on~$X$.
	For every measurable set $E\subseteq X$, we have $\mu(E')=\mu(E)$, where
	\begin{align*}
		E' &= \big\{ x \in E: \textup{$\psi^n(x) \in E$ for infinitely many $n>0$}\big\} \;.
	\end{align*}
\end{knowntheorem}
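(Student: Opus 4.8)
The plan is to prove the statement in two stages: first the ``weak'' recurrence statement that $\mu$-almost every point of $E$ returns to $E$ at least once, and then bootstrap it to the ``infinitely often'' conclusion claimed for $E'$.

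For the first stage, I would introduce the set $F\isdef\{x\in E: \psi^n(x)\notin E \text{ for all } n\geq 1\}$ of points of $E$ that never return to~$E$. Since $\psi$ and $E$ are measurable, $F = E\setminus\bigcup_{n\geq 1}\psi^{-n}(E)$ is measurable. The crucial observation is that the sets $F,\psi^{-1}(F),\psi^{-2}(F),\dots$ are pairwise disjoint: if $x\in\psi^{-i}(F)\cap\psi^{-j}(F)$ with $0\leq i<j$, then $y\isdef\psi^i(x)\in F\subseteq E$, while $\psi^{j-i}(y)=\psi^j(x)\in F\subseteq E$ with $j-i\geq 1$, contradicting the defining property of~$F$. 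Because $\mu$ is $\psi$-invariant, every $\psi^{-n}(F)$ has measure $\mu(F)$, and since these are infinitely many disjoint sets in a probability space, we must have $\mu(F)=0$.

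For the second stage, let $A\isdef E\setminus E'$, the set of points of $E$ that return to $E$ only finitely often; this is measurable because $E' = E\cap\bigcap_{k\geq 1}\bigcup_{n\geq k}\psi^{-n}(E)$ is. For $x\in A$, as $x=\psi^0(x)\in E$, the quantity $m(x)\isdef\max\{n\geq 0: \psi^n(x)\in E\}$ is well-defined and finite, and by maximality $\psi^{m(x)}(x)\in E$ whereas $\psi^k\big(\psi^{m(x)}(x)\big)=\psi^{m(x)+k}(x)\notin E$ for every $k\geq 1$; that is, $\psi^{m(x)}(x)\in F$, so $x\in\psi^{-m(x)}(F)$. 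Hence $A\subseteq\bigcup_{j\geq 0}\psi^{-j}(F)$, and by invariance together with the first stage, $\mu\big(\psi^{-j}(F)\big)=\mu(F)=0$ for every~$j$, so $\mu(A)=0$, i.e.\ $\mu(E')=\mu(E)$.

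The argument is short and routine; the only step needing a moment's care is the pairwise-disjointness claim of the first stage, which is the engine of the proof — everything else is bookkeeping with $\psi$-invariance, countable additivity, and the finiteness of~$\mu$.
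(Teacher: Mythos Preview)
Your proof is correct. Both your argument and the paper's rely on the same engine: a set whose preimages under $\psi$ are pairwise disjoint must be $\mu$-null, by invariance and finiteness of~$\mu$. The difference is organizational. You work in two stages: first show that the set $F$ of points of $E$ that never return has measure zero, then observe that every point of $E\setminus E'$ eventually lands in $F$ (at its last visit to~$E$). The paper does it in a single stage: for each finite $T\Subset\NN$ it defines $A_T\isdef\{x:\psi^n(x)\in E\iff n\in T\text{ for all }n\geq 0\}$, notes that the preimages $\psi^{-m}(A_T)$ are pairwise disjoint, hence $\mu(A_T)=0$, and concludes via $E\setminus E'\subseteq\bigcup_{T\Subset\NN}A_T$. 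Your $F$ is exactly the paper's $A_{\{0\}}$; your second stage is replaced in the paper by passing to the richer family~$\{A_T\}$. Both are standard; yours is the textbook two-step version (as in Walters), the paper's is slightly more compressed.
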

\begin{proof}
    Let $T\Subset \NN$ and $A_T \isdef \{x \in X : \text{for each $n\geq 0$, $n\in T$ if and only if $\psi^n(x)\in E$}\}$.  Note that the sets $\psi^{-m}(A_T)$ (for $m\geq 0$) are disjoint. Since $\mu$ is $\psi$-invariant, we have
    \begin{align*}
        1 \geq \mu\left(\bigcup_{m \geq 0} \psi^{-m}(A_T) \right) &= \sum_{m \geq 0} \mu\big(\psi^{-m}(A_T)\big) = \sum_{m \geq 0}\mu(A_T) \;,
    \end{align*}
    thus $\mu(A_T)=0$.
    As $E\setminus E' \subseteq \bigcup_{T \Subset \NN}A_T$, it follows that $\mu(E')=\mu(E)$.
\end{proof}

\begin{theorem}[Generalization of Gilman's dichotomy]
\label{thm:virtually_Z_satisfies_dichotomy}
	Let $\GG$ be a virtually $\ZZ$ group, $\mu$ a $\GG$-ergodic measure on $A^{\GG}$ and $\varphi \colon A^{\GG}\to A^{\GG}$ a CA. Then, $\varphi$ is either $\mu$-sensitive or $\mu$-equicontinuous. In particular, every virtually $\ZZ$ group satisfies Gilman's dichotomy. 
\end{theorem}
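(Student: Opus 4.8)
The plan is to show that if $\varphi$ is \emph{not} $\mu$-sensitive, then it is $\mu$-equicontinuous; since (as already noted) no cellular automaton can be both, this gives the dichotomy, and since Bernoulli measures are $\GG$-ergodic, the last assertion is a special case. I would fix once and for all a memory set $K\Subset\GG$ for $\varphi$ (Theorem~\ref{thm:curtis}), a non-torsion element $h\in\GG$ with $\generatedby{h}$ of finite index, and a finite set $W$ with $e\in W$ and $\generatedby{h}W=\GG$, and feed $K,h,W$ to Lemma~\ref{lema:tecnicotrivial} to obtain the finite set $F_0$. Since $\varphi$ is not $\mu$-sensitive (Definition~\ref{def:musens}), there is a configuration $z_0$ with $\mu(B)>0$, where $B\isdef C(z_0,F_0,\varphi)$. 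For $g\in\GG$ set $B_g\isdef gB=C(gz_0,gF_0,\varphi)$, so $\mu(B_g)=\mu(B)$ by $\GG$-invariance of $\mu$; one should think of $B_g$ as the event ``an $F_0$-shaped blocking region sits at $g$''.

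The first step is to make these blocking events recur along the $\generatedby{h}$-direction. The translation $\psi\colon x\mapsto hx$ is a $\mu$-preserving bijection of $A^\GG$, so applying Poincaré recurrence (Theorem~\ref{thm:poincare}) to $\psi$ and to $\psi^{-1}$, both with the set $B$, and intersecting the two conclusions, the set
\begin{align*}
    R &\isdef \big\{x\in A^\GG : \text{$h^{n}x\in B$ for infinitely many $n>0$, and $h^{-n}x\in B$ for infinitely many $n>0$}\big\}
\end{align*}
satisfies $\mu(R)\ge\mu(B)>0$. Since $R$ is $\generatedby{h}$-invariant, its $\GG$-saturation $R^{\ast}\isdef\bigcup_{g\in\GG}gR$ is a $\GG$-invariant measurable set with $\mu(R^{\ast})>0$, and $\GG$-ergodicity of $\mu$ forces $\mu(R^{\ast})=1$. (The detour through $R^{\ast}$ is needed because recurrence only sees the $\generatedby{h}$-action, which need not be ergodic.)

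The second step is a shielding argument built on Lemma~\ref{lema:tecnicotrivial}. Call a pair $(a,b)\in\GG\times\GG$ \emph{$F$-admissible} if $\GG\setminus(aF_0\cup bF_0)$ admits a partition into a finite set $\GG_\xint^{a,b}$ and a set $\GG_\xext^{a,b}$ with $\GG_\xint^{a,b}K\cap\GG_\xext^{a,b}=\varnothing$ and $\GG_\xext^{a,b}\cap F=\varnothing$ (so $F\subseteq\GG_\xint^{a,b}\cup aF_0\cup bF_0$). For such a pair and any $x\in B_a\cap B_b$, I claim that
\begin{align*}
    C(x,F,\varphi) &\supseteq [x_{\GG_\xint^{a,b}}]\cap B_a\cap B_b \;.
\end{align*}
To prove it, take $x'\in B_a\cap B_b$ agreeing with $x$ on $\GG_\xint^{a,b}$ and show by induction on $t\ge 0$ that $\varphi^t(x')$ and $\varphi^t(x)$ agree on $\GG_\xint^{a,b}\cup aF_0\cup bF_0$: for $t=0$ this holds because membership in $B_a\cap B_b$ pins both configurations to the same $z_0$-data on $aF_0\cup bF_0$, while they agree on $\GG_\xint^{a,b}$ by hypothesis; for the inductive step, $\varphi(\,\cdot\,)_{\GG_\xint^{a,b}}$ depends only on the restriction to $\GG_\xint^{a,b}K\subseteq\GG_\xint^{a,b}\cup aF_0\cup bF_0$ by property~\ref{item:tecnicotrivial:no-info-leak}, so the exterior $\GG_\xext^{a,b}$ cannot leak in, and on $aF_0\cup bF_0$ the orbits remain pinned to those of $z_0$. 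Restricting to $F$ yields $x'\in C(x,F,\varphi)$. Partitioning $B_a\cap B_b$ by the finitely many possible values on the finite set $\GG_\xint^{a,b}$, the claim shows $\mu(C(x,F,\varphi))>0$ on each positive-measure cell; hence, up to a $\mu$-null set, $B_a\cap B_b\subseteq G_F\isdef\{x\in A^\GG:\mu(C(x,F,\varphi))>0\}$.

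To assemble these, fix $F\Subset\GG$. Applying Lemma~\ref{lema:tecnicotrivial} to the finite set $g^{-1}F$ with $u=v=e$, then translating by $g$ (left translation is an automorphism of $\cayley(\GG,S)$), shows that $(gh^{-n},gh^{m})$ is $F$-admissible for every $g\in\GG$ and all $n,m$ past a threshold depending on $g^{-1}F$. Now if $x\in R^{\ast}$, write $x=gx_0$ with $x_0\in R$; choosing $n,m$ past that threshold with $h^{n}x_0\in B$ and $h^{-m}x_0\in B$ gives $x_0\in B_{h^{-n}}\cap B_{h^{m}}$, hence $x\in B_{gh^{-n}}\cap B_{gh^{m}}$ with $(gh^{-n},gh^{m})$ an $F$-admissible pair. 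Since $\GG$ is countable there are only countably many $F$-admissible pairs, so the union of their $\mu$-null ``exceptional'' sets is null, whence $\mu(G_F)\ge\mu(R^{\ast})=1$. As the family of finite subsets of $\GG$ is countable, $\mu\big(\bigcap_{F\Subset\GG}G_F\big)=1$, i.e., for $\mu$-almost every $x$ we have $\mu(C(x,F,\varphi))>0$ for every $F\Subset\GG$ — precisely $\mu$-equicontinuity. I expect the real work to be the shielding step: organizing the time-induction so that the finite interior stays causally insulated from the exterior across all iterates (exactly what properties~\ref{item:tecnicotrivial:finite-xint}, \ref{item:tecnicotrivial:no-info-leak} and~\ref{item:tecnicotrivial:F-contained} are engineered to provide), together with the measure-theoretic passage from ``$C(x,F,\varphi)$ contains a cylinder intersected with $B_a\cap B_b$'' to ``$\mu(C(x,F,\varphi))>0$ for $\mu$-almost every $x$'' via averaging over the finitely many values on $\GG_\xint^{a,b}$; a lesser subtlety is that recurrence lives on the $\generatedby{h}$-action, handled by saturating $R$ to $R^{\ast}$ before invoking ergodicity.
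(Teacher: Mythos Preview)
Your argument is correct and follows the same overall architecture as the paper's proof: pick a blocking window $F_0$ from Lemma~\ref{lema:tecnicotrivial}, use non-$\mu$-sensitivity to get a positive-measure stability class $B=C(z_0,F_0,\varphi)$, apply Poincar\'e recurrence along the $\generatedby{h}$-direction to place translates of $B$ on both sides, and run the shielding induction (your inclusion $C(x,F,\varphi)\supseteq[x_{\GG_\xint^{a,b}}]\cap B_a\cap B_b$ is exactly the paper's inclusion~\eqref{eq:gilman:virtually-Z:proof}). Two organizational choices differ. First, to pass from the inclusion to $\mu(C(x,F,\varphi))>0$, the paper invokes the density-point theorem (Theorem~\ref{thm:lebesgue-density:Cantor}) so that a \emph{particular} $x$ is a density point of both $A^-$ and $A^+$ and hence $\mu([x_{J_t}]\cap A^-\cap A^+)>0$; you instead partition $B_a\cap B_b$ over the finitely many patterns on $\GG_\xint^{a,b}$, which is a more elementary route that bypasses Theorem~\ref{thm:lebesgue-density:Cantor} entirely. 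Second, to go from recurrence along $\generatedby{h}$ to a $\GG$-invariant full-measure set, the paper builds a set $Y$ and proves it is $\GG$-invariant using normality of $\generatedby{h}$ and the $u,v\in W$ flexibility of Lemma~\ref{lema:tecnicotrivial}; you saturate $R$ to $R^\ast$ and compensate by applying the lemma to $g^{-1}F$ and translating (so you only ever need $u=v=e$). Both devices are sound; yours trades a short density/normality argument for a countable-union bookkeeping step over admissible pairs.
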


\begin{proof}
	By Theorem~\ref{thm:curtis}, there exists a set $K \Subset \GG$ and local map $f\colon A^{K}\to A$ such that $\varphi(x)_g = f\big((g^{-1}x)_K\big)$ for every $g \in \GG$.
	Let $h\in\GG$ be a non-torsion element of~$\GG$ such that $\generatedby{h}$ is a normal finite index subgroup of $\GG$, and let $W\Subset \GG$ be such that $\generatedby{h}W = \GG$ and $e \in W$.
	Let $F_0\Subset\GG$ be as in Lemma~\ref{lema:tecnicotrivial}.
	
	Suppose that $\varphi$ is not $\mu$-sensitive.
	By definition, there exists a configuration $z\in A^\GG$ such that $\mu\big(C(z,F_0,\varphi)\big)>0$.
	We claim that, $\mu\big(C(x,F,\varphi)\big)>0$ for every $F\Subset\GG$ and $\mu$-almost every $x\in A^\GG$, which means $\varphi$ is $\mu$-equicontinuous.
   
	Let $(J_n)_{n\in\NN}$ be an arbitrary co-final chain of finite subsets of~$\GG$.
	Clearly, for each $g$, the sequence $(gJ_n)_{n \in \NN}$ is also co-final in~$\GG$.
	Let $D\isdef\bigcap_{g\in\GG}D_g$, where
	\begin{align*}
		D_g &\isdef \{ x \in C(z,F_0,\varphi) :
			\text{$x$ is a point of density of $C(z,F_0,\varphi)$ with respect to $(gJ_n)_{n\in\NN}$}
		\} \;.
	\end{align*}
	By Theorem~\ref{thm:lebesgue-density:Cantor}, we have $\mu(D_g)=\mu\big(C(z,F_0,\varphi)\big)$ for each $g\in\GG$.  Hence, $\mu(D)=\mu\big(C(z,F_0,\varphi)\big)>0$.
	
	We say that a set $T\subseteq\ZZ$ is \Emph{bi-infinite} if $T\neq\varnothing$ and for every $n\in T$, there exist $\ell,r\in T$ such that $\ell<n<r$.
	Let
	\begin{align*}
		Y &\isdef \big\{x\in\supp(\mu): \text{$\{n\in\ZZ: h^nx\in WD\}$ is bi-infinite}\big\}
	\end{align*}
	be the set of all configurations in the topological support of~$\mu$ whose orbits under $h$ visit $WD$ infinitely many times in both directions.
	Let us show that $\mu(Y)=1$.
	Since $W\ni e$, we have $Y\supseteq D^+\cap D^-$, where
	\begin{align*}
		D^+ &\isdef \{x\in D: \text{$h^{-n}x\in D$ for infinitely many $n> 0$}\} \subseteq D \;, \\
		D^- &\isdef \{x\in D: \text{$h^{m}x\in D$ for infinitely many $m> 0$}\} \subseteq D \;.
	\end{align*}
	Applying Poincaré's recurrence theorem (Theorem~\ref{thm:poincare}) to translations by $h$ and $h^{-1}$, we obtain that $\mu(D^+)=\mu(D^-)=\mu(D)$, hence $\mu(Y)\geq \mu(D^+\cap D^-)=\mu(D)$.
	Let us verify that $Y$ is $\GG$-invariant.  The $\GG$-ergodicity of $\mu$ would then imply that $\mu(Y)=1$.
	Let $g\in\GG$.  By the normality of~$\generatedby{h}$, there exists an integer $k\neq 0$ such that $hg=gh^k $.
	Furthermore, since $\generatedby{h}W=\GG$, for every $w\in W$, there exists a $w'\in W$ and an integer $i(w)$ such that $g^{-1}w=h^{i(w)}w'$.  Set $I\isdef\{i(w): w\in W\}$.
	Now, let $x\in g^{-1}Y$.  We have $gx\in Y$, which means the set $\{n\in\ZZ: h^n gx\in WD\}$ is bi-infinite.
	But $h^n gx\in WD$ implies that $h^{-i+kn}x\in WD$ for some $i\in I$.  Since $I$ is finite and $k\neq 0$, it follows that the set $\{m\in\ZZ: h^m x\in WD\}$ is also bi-infinite, which means $x\in Y$.  Therefore, $g^{-1}Y\subseteq Y$ for every $g\in\GG$, and hence $Y$ is $\GG$-invariant.

	Now, let $F\Subset\GG$ and $x\in Y$.  We prove that $\mu\big(C(x,F,\varphi)\big)>0$.
	By the choice of $F_0$, there exists an $n_0\in\NN$ (corresponding to~$F$, as in Lemma~\ref{lema:tecnicotrivial}) such that for all $n,m> n_0$ and $u,v \in W$, the set $\GG\setminus(h^{-n}uF_0\cup h^mv F_0)$ can be partitioned into two sets $\GG_\xint$ and $\GG_\xext$ such that
	\begin{enumerate*}[label={(\roman*)}]
		\item $\GG_\xint$ is finite,
		\item $\GG_\xint K\cap\GG_\xext=\varnothing$, and
		\item $\GG_\xext \cap F = \varnothing$.
	\end{enumerate*}
	Since $x\in Y$, there exist $n,m>n_0$ and $u,v\in W$ be such that $u^{-1}h^nx, v^{-1}h^{-m}x\in D$.
	Let $\GG_\xint$ and $\GG_\xext$ be as above.
	
	Since $u^{-1}h^nx$ is a point of density of $C(z,F_0,\varphi)$ with respect to $(u^{-1}h^nJ_n)_{n\in\NN}$, the configuration~$x$ is a point of density of $A^-\isdef h^{-n}u C(z,F_0,\varphi)=C(h^{-n}uz,h^{-n}uF_0,\varphi)$ with respect to $(J_n)_{n\in\NN}$.
	Likewise, $x$ is a point of density of $A^+\isdef h^m vC(z,F_0,\varphi)=C(h^m vz,h^m vF_0,\varphi)$ with respect to $(J_n)_{n\in\NN}$.
	Pick $t\in\NN$ large enough such that
	\begin{itemize}
		\item $\GG_\xint\subseteq J_t$,
		\item $\mu\big([x_{J_t}]\cap A^-\big) > \frac{1}{2}\mu\big([x_{J_t}]\big)$, and
		\item $\mu\big([x_{J_t}]\cap A^+\big) > \frac{1}{2}\mu\big([x_{J_t}]\big)$.
	\end{itemize}
	Note that the latter two conditions, along with the fact that $x$ is in the topological support of~$\mu$, imply that $\mu\big([x_{J_t}]\cap A^-\cap A^+\big)>0$.
	
	We claim that
	\begin{align}
    \label{eq:gilman:virtually-Z:proof}
		[x_{J_t}]\cap A^-\cap A^+ &\subseteq C(x,\GG\setminus\GG_\xext,\varphi)\subseteq C(x,F,\varphi);
	\end{align}
	from which it follows that $\mu\big(C(x,F,\varphi)\big)\geq\mu\big([x_{J_t}]\cap A^-\cap A^+\big) >0$.

	The second inclusion in~\eqref{eq:gilman:virtually-Z:proof} follows trivially from fact that $F\cap\GG_\xext=\varnothing$.
	To prove the first inclusion, let $y\in [x_{J_t}]\cap A^-\cap A^+$, and suppose on the contrary that $y\notin C(x,\GG\setminus\GG_\xext,\varphi)$.
	Let $n\geq 0$ be the smallest integer such that $\varphi^n(y)_g\neq\varphi^n(x)_g$ for some $g\in\GG\setminus\GG_\xext=\GG_\xint\cup h^{-n}uF_0 \cup h^m vF_0$.	
	First, note that $g$ cannot belong to $h^{-n}uF_0$ because $x,y\in A^-=C(h^{-n}uz,h^{-n}uF_0,\varphi)$.
	Similarly, $g$~cannot belong to $h^m vF_0$. 
	Thus, we must have $g\in\GG_\xint$.
	The minimality of $n$ and the fact that $gK\cap\GG_\xext\subseteq\GG_\xint K\cap \GG_\xext=\varnothing$ imply that $\varphi^{n-1}(x)_{gK}=\varphi^{n-1}(x)_{gK}$.  But this gives
	\begin{align*}
		\varphi^n(y)_g = f\Big(\big(g^{-1}\varphi^{n-1}(y)\big)_{K}\Big) &=
			f\Big(\big(g^{-1}\varphi^{n-1}(x)\big)_{K}\Big)  =  \varphi^n(x)_g \;,
	\end{align*}
	which contradicts the assumption.
	Thus the second inclusion in~\eqref{eq:gilman:virtually-Z:proof} also holds.
	
	We have shown that $\mu\big(C(x,F,\varphi)\big)>0$ for every $F\Subset\GG$ and every $x\in Y$, where $Y$ is a measurable set with $\mu(Y)=1$, which means $\varphi$ is $\mu$-equicontinuous.
	This concludes the proof of the theorem.
\end{proof}

\subsection{Percolated additive CA}

We show that unless a finitely generated group $\GG$ has a trivial percolation threshold, the percolated additive CA, with a suitable choice of the set of generators, is neither equicontinuous nor sensitive with respect to the uniform Bernoulli measure.  Combined with Theorem~\ref{thm:virtually_Z_satisfies_dichotomy} and Theorem~\ref{thm:percolation}, this proves Theorem~\ref{thm:main-result}.

We recall from the introduction the definition of the {percolated additive CA}. Let $S\Subset \GG$ be a generating set for $\GG$, and $A\isdef\{\symb{0},\symb{1}\}\times\{\symb{0},\symb{1}\}^S$. The \Emph{percolated additive CA} on $\GG$ associated to $S$ is the CA defined by the map $\varphi\colon A^\GG\to A^\GG$, where
    \begin{align*}
        \varphi(x,w)_g &\isdef
            \bigg(\Big(\sum_{s\in S} w_{g}(s)\cdot x_{gs}\Big) \bmod{2}, w_g\bigg),
            \qquad\text{for every $g \in \GG$.}
    \end{align*}

\begin{observation}
\label{rem:percolated additive:additivity}
    A percolated additive CA is linear in the first component, in the sense that 
    \begin{align*}
        \varphi(x+y,w) &= \varphi(x,w)+ \varphi(y,w)
    \end{align*}
    for every $w \in \{\symb{0},\symb{1}\}^{S\times \GG}$ and $x,y \in \{\symb{0},\symb{1}\}^{\GG}$.
\end{observation}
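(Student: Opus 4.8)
The plan is to verify the stated identity by a direct computation at each site $g\in\GG$, using the defining formula for $\varphi$ together with the conventions that configurations in $\{\symb{0},\symb{1}\}^{\GG}$ are added coordinatewise modulo~$2$ and that, for a fixed environment $w$, the operation $+$ in $\varphi(x,w)+\varphi(y,w)$ acts only on the first coordinate (the second coordinate being held equal to $w$). First I would dispose of the second component: by definition the second coordinate of $\varphi(\,\cdot\,,w)_g$ is always $w_g$, independent of the first argument, so it agrees on both sides of the claimed identity and plays no role. For the first component, fix $g\in\GG$ and work in the field $\ZZ/2\ZZ$, to which every expression below is reduced. Since $(x+y)_{gs}=x_{gs}+y_{gs}$ in $\ZZ/2\ZZ$, distributivity of multiplication over addition gives
\begin{align*}
    \sum_{s\in S} w_g(s)\cdot (x+y)_{gs} &= \sum_{s\in S} w_g(s)\cdot x_{gs} + \sum_{s\in S} w_g(s)\cdot y_{gs} \;.
\end{align*}
The left-hand side is precisely the first component of $\varphi(x+y,w)_g$, while the right-hand side is the first component of $\bigl(\varphi(x,w)+\varphi(y,w)\bigr)_g$, because reduction modulo~$2$ is a ring homomorphism and addition of the outputs is coordinatewise in $\ZZ/2\ZZ$. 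As $g\in\GG$ was arbitrary, the two configurations coincide, which is the assertion of the observation.

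There is essentially no obstacle here; the only point worth making explicit is the bookkeeping of the two conventions just mentioned, namely that the sum in the definition of $\varphi$ is taken in $\ZZ$ and then reduced $\bmod\,2$, and that adding configurations (in the first coordinate) is the coordinatewise $\ZZ/2\ZZ$ operation. Once these are fixed, the statement is just the linearity of the $\ZZ/2\ZZ$-linear map $x\mapsto\varphi(x,w)$ associated with each fixed environment $w$, and the computation above records it. This linearity is exactly the property that will later be exploited when analysing the propagation of a single flipped coordinate through open paths of the percolation environment.
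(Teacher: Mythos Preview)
Your proof is correct; the paper states this observation without proof, and your direct coordinatewise verification in $\ZZ/2\ZZ$ is exactly the routine check that is being implicitly invoked.
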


Given $w\in\{\symb{0},\symb{1}\}^{\GG\times S}$, we let $\varphi_w(x)$ denote the first component of $\varphi(x,w)$.  Thus, by definition, $\varphi(x,w)=(\varphi_w(x),w)$, and more generally $\varphi^n(x,w)=(\varphi_w^n(x),w)$ for $n\geq 0$.
\begin{notation}
\label{def:measure}
  Let $\GG$ be a group generated by $S\Subset \GG$. We let $\mu_p$ and $\widetilde{\mu}_p$ denote the Bernoulli measures, with parameter $p\in(0,1)$, on $\{\symb{0},\symb{1}\}^\GG$ and $\{\symb{0},\symb{1}\}^{ S\times \GG}$ respectively.
  In particular, identifying $A^\GG$ with $\{\symb{0},\symb{1}\}^\GG\times\{\symb{0},\symb{1}\}^{ S\times \GG}$ as before, $\mu_\half\times\widetilde{\mu}_\half$ stands for the uniform Bernoulli measure on $A^\GG$.
\end{notation}

The fact that $\varphi$ is not $(\mu_\half\times\widetilde{\mu}_\half)$-sensitive is easy to show (Proposition~\ref{prop:random-percolated additive:not-mu-sensitive}).  To prove that $\varphi$ is not $(\mu_\half\times\widetilde{\mu}_\half)$-equicontinuous, we use a percolation argument.
We will also use the following simple lemma.

\begin{lemma}
\label{lem:parity-of-iid-Bernoulli-is-Bernoulli}
    Let $n\geq 1$ and let $Z_1,Z_2,\ldots,Z_n$ be i.i.d.~Bernoulli random variables with parameter~$\half$.  Then, $Y\isdef (Z_1+Z_2+\cdots+Z_n) \bmod{2}$ is also a Bernoulli random variable with parameter~$\half$.
\end{lemma}

\subsection{Connection with percolation}
\label{ssection:connection}
In this subsection $\GG$ denotes a finitely generated group. We show how the propagation of information in the percolated additive CA associated to a set of generators~$S$ is linked to site percolation on $\cayley(\GG,S)$. To this end, we introduce two processes, one tracking the dependencies over time in the CA when the environment is random and the other exploring the cluster of the origin in site percolation.

Let $\varphi$ be the percolated additive CA on $\GG$ associated to $S$.
Since $\varphi$ is additive on its first coordinate (Observation~\ref{rem:percolated additive:additivity}), so is $\varphi^n$ for every $n\geq 0$.  Hence, for every $w\in\{\symb{0},\symb{1}\}^{\GG\times S}$ and $n\geq 0$, there exists a set $M_n(w)\Subset\GG$ such that
\begin{align}
\label{eq:CA-after-n-step:additive}
    \varphi_w^n(x)_e &= \Big(\sum_{g\in M_n(w)} x_g\Big) \bmod{2}
\end{align}
for every $x\in\{\symb{0},\symb{1}\}^\GG$. Notice that if for $g\in \GG$ we let $\delta_g \in \{\symb{0},\symb{1}\}^\GG$ be the configuration with value $\symb{1}$ on~$g$ and $\symb{0}$ everywhere else, then
\begin{align*}
    M_n(w) &=
        \big\{g\in\GG: \varphi_w^n(\delta_g)_e=\symb{1}\big\}
\end{align*}

\begin{observation}
\label{obs:dependence-process:recursive}
    We have
    \begin{align*}
        M_0(w) &= \{e\} \;, \\
        M_n(w) &= \big\{g\in\GG: \textup{$\abs[\big]{\{s\in S: \text{$gs^{-1}\in M_{n-1}(w)$ and $w_{gs^{-1}}(s)=\symb{1}$}\}}$ is odd}\big\} 
            \qquad \text{for $n\geq 1$.}
    \end{align*}
    In other words, $M_n(w)$ consists of all sites to which $w$ has an odd number of open bonds from $M_{n-1}(w)$.
\end{observation}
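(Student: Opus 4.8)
The plan is to verify both assertions directly from the characterization $M_n(w) = \{g\in\GG : \varphi_w^n(\delta_g)_e = \symb{1}\}$ recorded just above, together with the additivity of $\varphi_w^{n-1}$ in its first argument (which, as already noted, follows by iterating Observation~\ref{rem:percolated additive:additivity}). The case $n = 0$ is immediate: $\varphi_w^0$ is the identity, so $\varphi_w^0(\delta_g)_e = (\delta_g)_e$, which equals $\symb{1}$ exactly when $g = e$; hence $M_0(w) = \{e\}$. No induction is needed for the recursion itself; it is a direct computation for each fixed $n\geq 1$.

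For $n\geq 1$, I would fix $g\in\GG$ and decompose the iterate as $\varphi_w^n = \varphi_w^{n-1}\circ\varphi_w$. The crux is to evaluate $\varphi_w(\delta_g)$ explicitly. From the local rule, $\varphi_w(\delta_g)_h = \big(\sum_{s\in S} w_h(s)\,(\delta_g)_{hs}\big)\bmod 2$, and $(\delta_g)_{hs}$ can be nonzero only for the unique $s$ with $hs = g$, namely $s = h^{-1}g$ (which need not lie in $S$). Thus $\varphi_w(\delta_g)_h = \symb{1}$ if and only if $h^{-1}g\in S$ and $w_h(h^{-1}g) = \symb{1}$; writing $h = gs^{-1}$, the set of such $h$ is the finite set $N_g\isdef\{gs^{-1} : s\in S,\ w_{gs^{-1}}(s) = \symb{1}\}$, and $\varphi_w(\delta_g) = \sum_{h\in N_g}\delta_h$ is the configuration equal to $\symb{1}$ precisely on $N_g$ (there is no cancellation, since distinct vertices $h$ contribute distinct terms $\delta_h$). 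Applying the additivity of $\varphi_w^{n-1}$ and evaluating at $e$, together with $\varphi_w^{n-1}(\delta_h)_e = \symb{1}\iff h\in M_{n-1}(w)$, gives
\[
    \varphi_w^n(\delta_g)_e = \varphi_w^{n-1}\Big(\sum_{h\in N_g}\delta_h\Big)_e = \Big(\sum_{h\in N_g}\varphi_w^{n-1}(\delta_h)_e\Big)\bmod 2 = \abs{N_g\cap M_{n-1}(w)}\bmod 2,
\]
so $g\in M_n(w)$ if and only if $\abs{N_g\cap M_{n-1}(w)}$ is odd.

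To finish, I would rewrite this parity condition in terms of generators. The map $s\mapsto gs^{-1}$ is a bijection from $\{s\in S : w_{gs^{-1}}(s) = \symb{1}\}$ onto $N_g$, and under it the preimage of $N_g\cap M_{n-1}(w)$ is exactly $\{s\in S : gs^{-1}\in M_{n-1}(w)\text{ and }w_{gs^{-1}}(s) = \symb{1}\}$; hence the two cardinalities coincide, which is precisely the stated recursion. The concluding ``in other words'' reformulation is just the translation that $w_{gs^{-1}}(s) = \symb{1}$ means the directed edge $(gs^{-1},g)$ is open, so that the count above is the number of open bonds from $M_{n-1}(w)$ landing on $g$. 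The only mildly delicate point, and it is a minor one, is the index bookkeeping: keeping straight the correspondence between a generator $s\in S$, the predecessor $h = gs^{-1}$ of $g$, and the open-bond condition, and checking that no cancellations occur when $\varphi_w(\delta_g)$ is expanded.
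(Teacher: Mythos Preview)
Your argument is correct. The paper states this as an \emph{observation} and offers no proof, treating it as a routine unwinding of the definitions; your direct computation via $M_n(w)=\{g:\varphi_w^n(\delta_g)_e=\symb{1}\}$, the explicit evaluation of $\varphi_w(\delta_g)$, and additivity of $\varphi_w^{n-1}$ is exactly the intended verification.
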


A probability measure $\nu$ on $\{\symb{0},\symb{1}\}^{\GG\times S}$ turns $(M_n)_{n\geq 0}$ into a stochastic process which we call the \Emph{dependence process} of $\varphi$. Let $M_{\leq n}(w)\isdef\bigcup_{i=0}^n M_i(w)$. We say that the process $(M_n)_{n\geq 0}$ \Emph{terminates} on $w$ if $M_{\leq n+1}(w)=M_{\leq n}(w)$ for some $n\geq 0$. Otherwise, we say that the process \Emph{survives}.

\begin{remark}
    The condition $M_{\leq n+1}(w)=M_{\leq n}(w)$ does not imply $M_{\leq n+k}=M_{\leq n}(w)$ for all $k\geq 0$.  However, the above strong notion of survival will be sufficient for our purpose.
\end{remark}

The following follows easily from Observation~\ref{obs:dependence-process:recursive} using induction.
\begin{observation}
\label{obs:dependence-process:measurability}
    For $n\geq 1$, the set $M_n(w)$ is uniquely determined by the restriction of $w$ to $M_{\leq n-1}(w)$.
\end{observation}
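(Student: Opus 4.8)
The plan is to prove the Observation by induction on $n$, reading off the behaviour from the recursion in Observation~\ref{obs:dependence-process:recursive}. Since the domain $M_{\leq n-1}(w)$ of the relevant restriction of $w$ itself depends on $w$, the statement is mildly self-referential, and the clean way to handle this is to induct on a slightly stronger claim: \emph{if} $w,w'\in\{\symb{0},\symb{1}\}^{\GG\times S}$ satisfy $w|_{M_{\leq n-1}(w)}=w'|_{M_{\leq n-1}(w)}$, \emph{then} $M_{\leq n-1}(w')=M_{\leq n-1}(w)$ \emph{and} $M_n(w')=M_n(w)$. This immediately yields the Observation, and it also records the extra information that the set $M_{\leq n-1}(w)$ on which we restrict is itself recovered from the restricted data.

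For the base case $n=1$, note $M_0(w)=\{e\}$ for every $w$, so the hypothesis reduces to $w_e=w'_e$. In the recursion for $M_1(w)$, the condition $gs^{-1}\in M_0(w)=\{e\}$ forces $s=g$, which is possible only if $g\in S$; hence $M_1(w)=\{g\in S : w_e(g)=\symb{1}\}$, a set depending only on $w_e$. Thus $M_1(w')=M_1(w)$, and trivially $M_{\leq 0}(w')=\{e\}=M_{\leq 0}(w)$.

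For the inductive step, assume the strengthened claim for $n$ and suppose $w|_{M_{\leq n}(w)}=w'|_{M_{\leq n}(w)}$. Since $M_{\leq n-1}(w)\subseteq M_{\leq n}(w)$, the inductive hypothesis applies and gives $M_{\leq n-1}(w')=M_{\leq n-1}(w)$ and $M_n(w')=M_n(w)$, whence also $M_{\leq n}(w')=M_{\leq n}(w)$. Now invoke the recursion for $M_{n+1}$: whenever $g\in\GG$ and $s\in S$ satisfy $gs^{-1}\in M_n(w)$, we have $gs^{-1}\in M_{\leq n}(w)$, so $w_{gs^{-1}}(s)=w'_{gs^{-1}}(s)$; together with $M_n(w')=M_n(w)$ this makes the parity condition defining $M_{n+1}(w)$ and $M_{n+1}(w')$ coincide for every $g$, so $M_{n+1}(w')=M_{n+1}(w)$. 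This closes the induction.

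I do not anticipate any genuine obstacle here; the computation is entirely routine once the recursion of Observation~\ref{obs:dependence-process:recursive} is in hand. The only point that needs a little care is the self-referential phrasing of the statement, which is precisely why the induction hypothesis must also assert that $M_{\leq n-1}(w)$ (not merely $M_n(w)$) is determined by $w|_{M_{\leq n-1}(w)}$; without that addendum the induction does not close.
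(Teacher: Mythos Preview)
Your proof is correct and follows exactly the approach the paper indicates: the paper simply asserts that the observation ``follows easily from Observation~\ref{obs:dependence-process:recursive} using induction'' without spelling out any details, and your argument is a careful execution of precisely that induction. Your explicit strengthening of the induction hypothesis to include $M_{\leq n-1}(w')=M_{\leq n-1}(w)$ is the right way to make the self-referential statement go through cleanly.
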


Next, we define a similar process based on site percolation.
Given $x\in\{\symb{0},\symb{1}\}^\GG$ and $n\geq 0$, we let $M'_n(x)$ denote the set of all sites $g\in\GG$ to which $x$ has an open path of length $n$ from $e$ to $g$.  We do not require $e$ to be open in such a path.
We let $M'_{\leq n}(x)\isdef\bigcup_{i=0}^n M'_i(x)$.

\begin{observation}
    We have
    \begin{align*}
        M'_0(x) &= \{e\} \;, \\
        M'_n(x) &= \big\{g\in\GG: \textup{$x_g=\symb{1}$ and $g=fs$ for some $f\in M'_{n-1}(x)$ and $s\in S$} \big\} 
            \qquad \text{for $n\geq 1$.}
    \end{align*}
    In other words, $M'_n(x)$ consists all open sites of~$x$ to which there is a bond from $M'_{n-1}(x)$.
\end{observation}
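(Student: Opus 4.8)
The plan is to prove this observation by directly unwinding the definition of an open path in $\cayley(\GG,S)$. Recall that, for $n\geq 1$, an open path of length $n$ from $e$ to $g$ is a sequence of vertices $v_0=e,v_1,\dots,v_n=g$ with $v_i=v_{i-1}s_i$ for some $s_i\in S$ and with $x_{v_i}=\symb{1}$ for every $1\leq i\leq n$ (no constraint is placed on $v_0=e$). The case $n=0$ is immediate: the only path of length $0$ starting at $e$ is the one-vertex sequence $(e)$, which carries no openness requirement, so $M'_0(x)=\{e\}$. For $n\geq 1$ I would establish the displayed identity by two inclusions.

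For the inclusion ``$\subseteq$'', I take $g\in M'_n(x)$ and fix a witnessing open path $(v_0,\dots,v_n)$. Then $x_g=x_{v_n}=\symb{1}$, the truncation $(v_0,\dots,v_{n-1})$ is an open path of length $n-1$ from $e$ to $f\isdef v_{n-1}$ (its vertices of positive index form a subset of those of the original path, hence are open), so $f\in M'_{n-1}(x)$, and $g=v_n=v_{n-1}s_n=fs_n$ with $s_n\in S$; this is exactly membership in the right-hand side. For ``$\supseteq$'', I take $g$ with $x_g=\symb{1}$ and $g=fs$ for some $f\in M'_{n-1}(x)$ and $s\in S$, fix an open path $(v_0,\dots,v_{n-1})$ of length $n-1$ from $e$ to $f$, and append $v_n\isdef v_{n-1}s=g$. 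The resulting sequence has adjacent vertices in $\cayley(\GG,S)$, and $x_{v_i}=\symb{1}$ for $1\leq i\leq n-1$ by the choice of the path and for $i=n$ since $x_g=\symb{1}$; hence it is an open path of length $n$ from $e$ to $g$, so $g\in M'_n(x)$. The final sentence of the observation is just the verbal translation of this identity, since ``there is a bond from $M'_{n-1}(x)$ to $g$'' means precisely ``$g=fs$ for some $f\in M'_{n-1}(x)$ and $s\in S$''.

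There is essentially no obstacle here; the only point that deserves a second of attention is the boundary case $n=1$ (equivalently $n-1=0$), where the path realizing $f\in M'_0(x)=\{e\}$ is just $(e)$ and carries no openness constraint — this is consistent, because in the extended path of length $1$ we likewise impose openness only on $v_1=g$ and not on $v_0=e$. One should also note, for the ``$\supseteq$'' direction, that when $n-1\geq 1$ the endpoint $f=v_{n-1}$ of the chosen path is automatically open, so no extra hypothesis on $f$ beyond $f\in M'_{n-1}(x)$ is needed to conclude that the extended path is open.
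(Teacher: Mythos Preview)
Your proof is correct; the paper states this as an observation without proof, and your direct two-inclusion argument from the definition of an open path (with your careful treatment of the $n=1$ boundary case) is exactly the routine verification one would expect.
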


A probability measure $\mu$ on $\{\symb{0},\symb{1}\}^\GG$ turns $(M'_n)_{n\geq 0}$ into a stochastic process which we call the \Emph{(percolation) cluster exploration process}.  As before, we say that the process $(M'_n)_{n\geq 0}$ \Emph{terminates} or \Emph{survives} on $x$ depending on whether $M'_{n+1}(x)=M'_n(x)$ for some $n\geq 0$ or not.
Clearly, if $\mu$ percolates, then $(M'_n)_{n\geq 0}$ survives.

We are now ready to state the connection between the percolated additive CA and site percolation.


\begin{proposition}
\label{prop:dependence-process-vs-cluster-exploration-process}
    If the cluster exploration process $(M'_n)_{n\geq 0}$ (with measure $\mu_\half$) has positive probability of survival, then so does the dependence process $(M_n)_{n\geq 0}$ (with measure $\widetilde{\mu}_\half$).
\end{proposition}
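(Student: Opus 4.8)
The plan is to construct a coupling of an environment $w\sim\widetilde\mu_\half$ and a site configuration $x\sim\mu_\half$, obtained by running the two processes simultaneously as a single breadth-first exploration started at $e$, so that survival of the cluster exploration process $(M'_n)$ on $x$ forces survival of the dependence process $(M_n)$ on $w$. Once such a coupling is in place, $\widetilde\mu_\half(M\text{ survives})\ge\mu_\half(M'\text{ survives})>0$, which is the assertion.

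Concretely, I would reveal the environment $w$ layer by layer: having determined $M_{\le n-1}(w)$, expose the bonds $w_f(\cdot)$ for $f$ in the current layer $M_{n-1}(w)$; by Observation~\ref{obs:dependence-process:recursive} this fixes $M_n(w)$. The key observation is that for a site $g$ adjacent to $M_{n-1}(w)$ that receives at least one open bond from $M_{n-1}(w)$ among the bonds exposed at this step, the indicator $\mathbbm 1\{g\in M_n(w)\}$ is the parity of a nonempty collection of fresh independent fair bits, hence --- by Lemma~\ref{lem:parity-of-iid-Bernoulli-is-Bernoulli} --- a Bernoulli($\half$) random variable independent of everything revealed so far; and these indicators are independent across distinct such $g$, because the bonds involved are distinct coordinates of $w$. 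So at each step the exploration hands us a fresh supply of independent fair coins, exactly the randomness that drives the cluster exploration process: one sets $x_g:=\mathbbm 1\{g\in M_n(w)\}$ for these sites and an independent fair coin for every other site. This keeps $x\sim\mu_\half$, and, carrying $M'(x)$ along on the same coins, yields that the sites reached by $M'(x)$ are among those reached by $M(w)$, step by step --- from which survival of $M(w)$ follows whenever $M'(x)$ survives.

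The step I expect to be the main obstacle is that a site $g$ can lie in several layers $M_n(w)$ of the dependence process --- it may be reached by open walks of several different lengths --- so that some of the bonds deciding $\mathbbm 1\{g\in M_n(w)\}$ were exposed at an earlier step and are no longer fresh; when that occurs the identification of $x_g$ with $\mathbbm 1\{g\in M_n(w)\}$ cannot be made directly, and one must either arrange the exploration so that such revisitations do not interfere (for instance processing sites in order of their graph distance from $e$, using that the length-$n$ walks reaching a distance-$n$ site are precisely its geodesics) or verify that the already-exposed information leaves the relevant conditional distributions unchanged. Carrying this out while respecting the two ``strong'' notions of survival in play --- the dependence process is required to acquire a new site at every step, whereas the cluster exploration process is only required to change layers at every step --- is the delicate bookkeeping, with Lemma~\ref{lem:parity-of-iid-Bernoulli-is-Bernoulli} serving as the essential probabilistic input throughout.
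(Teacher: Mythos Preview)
Your coupling strategy is exactly the paper's, and the obstacle you single out is the right one---but it dissolves with a small change of viewpoint that you have not quite found.

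The fix is to refrain from trying to match $\mathbbm 1\{g\in M_n(w)\}$ with $x_g$ at \emph{every} site adjacent to $M_{n-1}(w)$. Instead, keep track of the set $A_{n-1}\isdef\{e\}\cup M_{\le n-2}(w)S$ of sites already explored, and at step~$n$ sample only the bonds from $M_{n-1}(w)$ into $\GG\setminus A_{n-1}$. For a site $h\in M_{n-1}(w)S\setminus A_{n-1}$, \emph{every} bond from $M_{n-1}(w)$ into $h$ is fresh: no bond with target outside $A_{n-1}$ can have been sampled earlier, since all previously revealed bonds lie in the subgraph induced by $A_{n-1}$. Hence the parity determining $h\in M_n(w)$ is a genuine Bernoulli($\half$) by Lemma~\ref{lem:parity-of-iid-Bernoulli-is-Bernoulli}, with no stale bits contaminating it, and these parities are independent across distinct such $h$. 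Set $x_h$ equal to this parity. Sites already in $A_{n-1}$ need no attention: whether or not they re-enter $M_n(w)$ is irrelevant, because they are already in $M_{\le n-1}(w)=M'_{\le n-1}(x)$.

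This gives the \emph{equality} $M_{\le n}(w)=M'_{\le n}(x)$ for all~$n$ (not merely an inclusion), so the two notions of survival coincide almost surely under the coupling, and no separate bookkeeping for ``strong'' survival is needed. Your suggestion of ordering by graph distance is unnecessary, and the conditioning ``receives at least one open bond'' in your second paragraph should be dropped---it is neither needed nor correct, since the unconditional parity over $k\ge 1$ fresh bits is already fair.
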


\begin{proof}
    It suffices to construct a coupling of $\mu_\half$ and $\widetilde{\mu}_\half$ with the property that $M'_{\leq n}=M_{\leq n}$ almost surely for every $n\geq 0$. We shall do this by recursively sampling $\xv{w}$ from $\widetilde{\mu}_\half$ and $\xv{x}$ from $\mu_\half$ in such a way that $M'_{\leq n}(\xv{x})=M_{\leq n}(\xv{w})$ for all $n\geq 0$.
    As before, we think of the sites $g$ with $\xv{x}_g=\symb{1}$ as \Emph{open} sites and the bonds $(g,gs)$ with $\xv{w}_g(s)=\symb{1}$ as \Emph{open} bonds.
    The non-open sites and bonds are considered \Emph{closed}.

    We start by sampling the status of the site at the origin.  By definition, $M'_0=M_0=\{e\}$ irrespective of the values of $\xv{w}$ and $\xv{x}$.
    Let $A_0\isdef\{e\}$.
    Next, we sample the status of all the bonds exiting $e$.  We declare each site $s\in S\setminus\{e\}$ to be open if and only if the bond $(e,s)$ is open; otherwise, we declare $s$ to be closed.
    Clearly, $M_1\setminus\{e\}=M'_1\setminus\{e\}=\{s: \text{$(e,s)$ is open}\}$, hence $M_{\leq 1}=M'_{\leq 1}$.  Let $A_1\isdef A_0\cup S$.

    Let $n\geq 2$.  Suppose that by the end of the $(n-1)$st step, we have ensured that $M'_{\leq k}=M_{\leq k}$ for $k=0,1,\ldots,n-1$ and in the process we have sampled the status of all the sites and bonds in the subgraph induced by $A_{n-1}\isdef A_0\cup M_{\leq n-2}S$.
    At the $n$th step, we first sample the status of all the bonds from $M_{n-1}$ to $\GG\setminus A_{n-1}$ (equivalently, from $M_{\leq n-1}$ to $\GG\setminus A_{n-1}$).  We then declare a site $h\in M_{n-1}S\setminus A_{n-1}$ as open if the number of open bonds from $M_{n-1}$ to $h$ is odd; otherwise, $h$ is declared as closed.  By Lemma~\ref{lem:parity-of-iid-Bernoulli-is-Bernoulli}, each such site will be open with the correct probability of~$\half$.
    It follows directly from the definitions that $M'_n\setminus M'_{\leq n-1}=M_n\setminus M_{\leq n-1}$, hence $M'_{\leq n}=M_{\leq n}$.
    Furthermore, it is clear that by the end of this step, we have sampled the status of all the sites and bonds in $A_n\isdef A_0\cup M_{\leq n-1}S$.

    The sites and bonds whose status are not sampled at any step are irrelevant to the two processes $(M_n)_{n\geq 0}$ and $(M'_n)_{n\geq 0}$.  We can sample their status independently of one another.
    The claimed property of the constructed coupling now follows by induction.
\end{proof}

\subsection{Lowering the percolation threshold}

This section is devoted to proving the following result on the percolation threshold of finitely generated groups.

\begin{proposition}[Percolation with low threshold]
\label{prop:halfpercolation}
    Let $\GG$ be a finitely generated group with a non-trivial percolation threshold. For every $\alpha \in (0,1)$ there exists a set of generators $S\Subset \GG$ such that $p_{\critical}(\cayley(\GG,S))<\alpha$.
\end{proposition}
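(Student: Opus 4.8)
The plan is to pass to a large power of the group's Cayley graph. Fix any finite generating set $T$ for $\GG$ with $p_\critical(\cayley(\GG,T)) = p_0 < 1$. For a positive integer $r$, take $S_r \isdef B_r \setminus \{e\}$, the punctured ball of radius $r$ in $\cayley(\GG,T)$; this is a finite symmetric generating set. The Cayley graph $\cayley(\GG,S_r)$ has the same vertex set $\GG$, but two vertices are adjacent whenever they are at $\cayley(\GG,T)$-distance at most $r$. The key point is that in this denser graph, a single open site of high enough degree lets an open self-avoiding path pass through many alternative continuations, so the effective threshold drops. I would make this precise by comparing $\cayley(\GG,S_r)$-percolation with $\cayley(\GG,T)$-percolation after a coarse-graining/renormalization: partition (or at least cover) $\GG$ by $\cayley(\GG,T)$-balls of radius $\approx r/3$, call a ball "good" if it contains an open site, and observe that the good balls, when adjacent in a suitable block graph, are connected by genuine $\cayley(\GG,S_r)$-edges between their open representatives.

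The main steps, in order: (1) Show that a good ball of radius $\rho \isdef \lfloor r/3\rfloor$ occurs with probability at least $1 - (1-p)^{|B_\rho|}$, which tends to $1$ as $r\to\infty$ uniformly in $p\ge \alpha$ — actually it suffices that $|B_\rho|\to\infty$, which holds since $\GG$ is infinite; so for any fixed $\alpha>0$ we can make this probability as close to $1$ as we like, in particular above $1-p_0$ evaluated on the block graph. (2) Exhibit a deterministic "block graph" $H$ on a subset of $\GG$ (one representative per block) which is quasi-isometric to $\cayley(\GG,T)$ and in which adjacent blocks have representatives at $\cayley(\GG,T)$-distance $\le r$, hence joined by an $S_r$-edge; since $H$ is roughly isometric to a Cayley graph of $\GG$, it too has non-trivial percolation threshold, and in fact one can just take $H = \cayley(\GG, B_{2\rho}\setminus\{e\})$ restricted appropriately — but cleaner is to use that $p_\critical$ of the block graph is $<1$ because it is quasi-transitive of super-linear growth, or simply invoke that a $1$-dependent (or even just close-to-product) site percolation on it with density close to $1$ percolates by a Liggett–Schonmann–Stacey-type domination argument. (3) Couple: sample $\mu_p$ on $\{\symb 0,\symb 1\}^\GG$; the good-block indicators dominate a high-density product (or $1$-dependent) field on $H$; when that field percolates in $H$, trace the infinite sequence of adjacent good blocks, pick an open representative in each, and connect consecutive representatives by $S_r$-edges — discarding repetitions yields an infinite open self-avoiding path in $\cayley(\GG,S_r)$. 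Therefore $\mu_p$ percolates in $\cayley(\GG,S_r)$ for all $p\ge\alpha$ once $r$ is large, giving $p_\critical(\cayley(\GG,S_r)) < \alpha$.

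There is a subtlety I should address head-on: the good-block indicators are \emph{not} independent, because overlapping or nearby blocks share sites. I would resolve this by choosing the blocks to be \emph{disjoint} (a maximal packing of radius-$\rho$ balls, spaced $2\rho+1$ apart in $\cayley(\GG,T)$), so the indicators are genuinely independent; the price is that adjacent blocks in the packing graph $H$ are then at $\cayley(\GG,T)$-distance up to $\approx 4\rho+1 \le r$, which is exactly why I take $\rho \approx r/3$ rather than $r/2$. With disjointness the field is an honest Bernoulli field on $H$ of parameter $1-(1-p)^{|B_\rho|} \ge 1 - (1-\alpha)^{|B_\rho|}$, and $H$ — being a packing graph of $\GG$ — is an infinite, bounded-degree, quasi-transitive graph of the same (super-linear) growth type as $\GG$, hence quasi-isometric to $\cayley(\GG,T)$ and so has $p_\critical(H)<1$ by Theorem~\ref{thm:percolation} (or directly because non-triviality of the threshold is a quasi-isometry invariant among bounded-degree graphs). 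So I only need $|B_\rho|$ large enough that $1-(1-\alpha)^{|B_\rho|} > p_\critical(H)$; since $|B_\rho|\to\infty$, this holds for all large $r$.

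The main obstacle is step~(2)/(3): setting up the block graph $H$ cleanly and certifying $p_\critical(H)<1$ without circularity — one wants to avoid re-deriving the Duminil-Copin–Goswami–Raoufi–Severo–Yadin theorem. The honest fix is to note that $H$ is quasi-isometric to $\cayley(\GG,T)$ and that having a non-trivial percolation threshold is preserved under quasi-isometry for bounded-degree graphs (this follows from standard renormalization, e.g.\ \cite[Theorem 7.15 and surrounding]{LP2016}); alternatively, and perhaps most economically, one can realize $H$ itself as (a subgraph of) a Cayley graph of a finite-index subgroup or of $\GG$ with another generating set, so that Theorem~\ref{thm:percolation} applies verbatim. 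The remaining bookkeeping — extracting a self-avoiding path from the block path, handling that the representative sites must be \emph{open} (which they are, by the definition of good block) and that consecutive representatives are adjacent in $\cayley(\GG,S_r)$ (distance $\le 4\rho+1\le r$) — is routine.
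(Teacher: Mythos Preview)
Your coarse-graining strategy is sound in outline and is close in spirit to what the paper does in the nonamenable case, but there is a genuine gap at exactly the point you flag as the main obstacle. The packing graph $H=H_\rho$ depends on the scale~$\rho$, and so does its threshold $p_{\critical}(H_\rho)$; the quasi-isometry between $H_\rho$ and $\cayley(\GG,T)$ has constants that blow up with~$\rho$, so the qualitative fact that non-triviality of the threshold is a quasi-isometry invariant yields only $p_{\critical}(H_\rho)<1$ for each \emph{fixed}~$\rho$, with no uniform bound. Your concluding inequality $1-(1-\alpha)^{|B_\rho|} > p_{\critical}(H_\rho)$ is therefore a race between two $\rho$-dependent quantities, and you have not argued that the left side eventually wins. (The fallback of realising $H_\rho$ as a Cayley graph does not work in general either: a maximal packing is not a coset of any subgroup, nor need the packing graph be vertex-transitive, so Theorem~\ref{thm:percolation} does not apply to it.)

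The paper resolves precisely this uniformity issue by splitting into two cases. When $\GG$ is nonamenable, Hall's matching theorem is used to produce a \emph{bijection} $\varphi\colon\GG\to\Delta$ with $\varphi(g)\in gB_m$; pulling the block structure back through $\varphi$ identifies the block graph with $\cayley(\GG,S)$ itself, whose threshold is the fixed original value independent of the scale---after which your argument goes through verbatim. When $\GG$ is amenable, no such bijection is available, and the paper instead uses the exact tilings of Downarowicz--Huczek--Zhang together with a Strassen coupling to compare $\mu_\beta$-percolation on $\cayley(\GG,S)$ \emph{directly} with $\mu_\alpha$-percolation on $\cayley(\GG,S')$, passing through the tile graph only as an intermediary and never invoking its threshold at all: the relevant estimate is that F{\o}lner tiles can be chosen with $|\partial_S T|/|T| < \log(1-\alpha)/\log(1-\beta)$, which allows one to couple the event ``$\partial_S T$ meets an open $\mu_\beta$-site'' below the event ``$T$ meets an open $\mu_\alpha$-site'' independently tile by tile.
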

\noindent In other words, in every group where the percolation threshold is non-trivial, we have
\begin{align*}
    \inf_{S\Subset \GG} p_{\critical}\big(\cayley(\GG,S)\big) &= 0 \;.
\end{align*}
We remark that 
the corresponding statement regarding the opposite end
was recently proven false by Panagiotis and Severo~\cite{PS2023}.
Namely, they showed that there is a universal gap $\varepsilon_0>0$ such that every Cayley graph $\Gamma$ with non-trivial percolation threshold satisfies $p_{\critical}(\Gamma) \leq 1-\varepsilon_0$. 

Let us note that in the case where $\GG=\ZZ^d$, the conclusion of Proposition~\ref{prop:halfpercolation} can be proven easily by partitioning $\ZZ^d$ into hypercubic blocks of equal size and using the fact that these blocks form a lattice that is again isomorphic to~$\ZZ^d$.  Unfortunately, this argument does not seem to extend to arbitrary finitely generated groups.  We prove the proposition by considering the cases in which $\GG$ is nonamenable (Proposition~\ref{prop:infamous_nonamenable}) and amenable (Proposition~\ref{prop:infamous_amenable}) separately.

Let $\GG$ be a group and let $K\Subset \GG$ and $\delta >0$. A set $F\Subset \GG$ is said to be \Emph{$(K,\delta)$-invariant}, if
\begin{align*}
    \abs{FK\setminus F} &\leq \delta\abs{F} \;.
\end{align*}
Given $F,K\Subset\GG$, we let $\Interior_K(F)\isdef\{t\in F: tK\subseteq F\}$ and $\partial_K(F)\isdef F\setminus\Interior_K(F)$. 
A double counting argument shows that if $F$ is a $(K,\sfrac{\delta}{\abs{K}})$-invariant set, then $\abs[\big]{\partial_K(F)}< \delta \abs{F}$ (see e.g.,~\cite[Lemma 2.6]{DownarowiczHuczekZhang2019}).

A group $\GG$ is called \Emph{amenable} if for every pair $(K,\delta)$ there exists some $(K,\delta)$-invariant set $F\Subset \GG$; if this does not hold, we say that $\GG$ is \Emph{nonamenable}. An elementary computation shows that if $\GG$ is nonamenable, then for every $C >0$ one can find a set $K\Subset \GG$ such that $\abs{FK}> C\abs{F}$ for every $F \Subset \GG$ (see e.g.,~\cite[proof of Theorem 4.9.2]{ceccherini2010cellular}).

\subsubsection{The nonamenable case}

Let $\GG$ be a group and $S\Subset \GG$ a finite set of generators. Given $r \in \NN$, we say that $\Delta \subseteq \GG$ is
\begin{itemize}
    \item \Emph{$r$-separated} if for every distinct $g,h \in \Delta$ we have $\abs{g^{-1}h}_S\geq r$.
    \item \Emph{$r$-covering} if for every $h \in \GG$ there is a $g \in \Delta$ with $\abs{g^{-1}h}_S \leq r$.
\end{itemize}
Notice that a maximal $r$-separated set $\Delta\subseteq\GG$ is necessarily $r$-covering.

A \Emph{bipartite graph} is a graph $\Gamma$ whose vertex set
is a union of two disjoint sets $U$ and $V$ and all its edges
are between elements of $U$ and elements of $V$.
A perfect matching is a bijection $\varphi \colon U \to V$ with the property that $(u,\varphi(u))$ is an edge for every $u \in U$.  We denote the set of \Emph{neighbors} of a vertex $a\in U\sqcup V$ by $\neighbours(a)$.  More generally, given $A\subseteq U\sqcup V$, we let $\neighbours(A)\isdef\bigcup_{a\in A}\neighbours(a)$.
We say that $\Gamma$ is \Emph{locally finite} if for every $a\in U\sqcup V$, the set $\neighbours(a)$ is finite.



A necessary and sufficient condition for the existence of perfect matchings is provided by Hall's matching theorem~\cite{PHall1935,MHall1948}, which we now recall.
A proof of the following version of this theorem can be found in~\cite[Theorem H.3.6]{ceccherini2010cellular}.

\begin{knowntheorem}[Hall's matching theorem]\label{thm:Hall-matching}
    Let $\Gamma$ be a locally finite bipartite graph with vertex set $U\sqcup V$.
    Then, $\Gamma$ has a perfect matching if and only if the following conditions are satisfied:
    \begin{enumerate}[label={\textup{(\roman*)}}]
        \item \textup{(\Emph{left Hall condition})} $\abs{\neighbours(A)}\geq\abs{A}$ for every $A\Subset U$.
        \item \textup{(\Emph{right Hall condition})} $\abs{\neighbours(B)}\geq\abs{B}$ for every $B\Subset V$.
    \end{enumerate}
\end{knowntheorem}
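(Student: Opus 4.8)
The statement is a classical fact (a proof of this precise form appears in~\cite[Theorem~H.3.6]{ceccherini2010cellular}); the plan is to separate the trivial necessity from the substantive sufficiency, and for the latter to first obtain one-sided saturating matchings and then splice them together. Necessity is immediate: if $\varphi\colon U\to V$ is a perfect matching, then for every $A\Subset U$ we have $\varphi(A)\subseteq\neighbours(A)$ and $\abs{\varphi(A)}=\abs{A}$, which gives the left Hall condition, and applying the same reasoning to $\varphi^{-1}$ gives the right Hall condition.

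For sufficiency, the first step is to produce a matching $M_U$ of $\Gamma$ that \emph{saturates} $U$, i.e.\ covers every vertex of $U$; this is M.~Hall's infinite marriage theorem. Since $\Gamma$ is countable and locally finite, it reduces to the classical finite marriage theorem by a compactness argument: enumerate $U=\{u_1,u_2,\dots\}$, form the rooted tree whose level-$n$ nodes are the matchings of $\{u_1,\dots,u_n\}$ into $V$ (nonempty by finite Hall applied to $\{u_1,\dots,u_n\}$, and finite because each $u_i$ has finitely many neighbors), with the obvious restriction map as parent relation; this tree is infinite and finitely branching, so König's infinity lemma yields an infinite branch, hence a matching saturating $U$. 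Symmetrically, the right Hall condition yields a matching $M_V$ saturating $V$.

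It remains to merge $M_U$ and $M_V$, which is the only genuinely infinite part. Consider the subgraph $H$ of $\Gamma$ with edge set $M_U\triangle M_V$. Every vertex has degree at most $2$ in $H$, so its components are isolated vertices, finite paths, one-way infinite paths, bi-infinite paths, and even cycles, and along every path or cycle the edges alternate between $M_U\setminus M_V$ and $M_V\setminus M_U$. The key point is that $H$ has \emph{no finite path component}: an endpoint $p$ of a path has degree $1$ in $H$, and since $p$ is $M_U$-saturated when $p\in U$ and $M_V$-saturated when $p\in V$, a short case analysis forces the unique $H$-edge at $p$ to lie in $M_U\setminus M_V$ if $p\in U$ and in $M_V\setminus M_U$ if $p\in V$; propagating this through the alternation to the other endpoint and comparing with the side of the bipartition on which that endpoint lies yields a contradiction in every parity case. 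Hence every component of $H$ is an isolated vertex (matched the same way by $M_U$ and $M_V$), an even cycle, a bi-infinite path, or a one-way infinite path; on cycles and bi-infinite paths both $M_U$ and $M_V$ cover all vertices of the component, while on a one-way infinite path the alternation shows $M_U$ covers the whole component if its endpoint lies in $U$ (and $M_V$ then misses exactly that endpoint) and $M_V$ covers it if the endpoint lies in $V$. Defining $M$ to be $M_U\cap M_V$ together with, on each remaining component, the $M_U$-edges if it is a cycle, a bi-infinite path, or a one-way infinite path with endpoint in $U$, and the $M_V$-edges if it is a one-way infinite path with endpoint in $V$, one gets a matching (the choices are on pairwise disjoint components) that saturates every vertex, i.e.\ a perfect matching.

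The main obstacle is not the bookkeeping in either direction but the two places where finiteness genuinely fails: obtaining the one-sided matchings at all requires the compactness input behind M.~Hall's theorem, so both Hall conditions are really used, one per side; and in the merging step the one-way infinite path components of $M_U\triangle M_V$ cannot be resolved by an arbitrary choice — the construction works precisely because the saturation side of each such endpoint dictates which of $M_U,M_V$ to keep, which is exactly what the ``no finite path component'' computation makes available.
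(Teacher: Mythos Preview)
The paper does not give its own proof of this theorem; it quotes it as a known result and refers the reader to \cite[Theorem~H.3.6]{ceccherini2010cellular}. Your argument is correct and follows the standard route: necessity is immediate, and for sufficiency you use M.~Hall's compactness argument (K\"onig's lemma) to produce a $U$-saturating matching $M_U$ and a $V$-saturating matching $M_V$, then splice them together via the Cantor--Schr\"oder--Bernstein analysis of the components of $M_U\triangle M_V$. The key computation ruling out finite path components is right: an endpoint $p$ of such a path has degree~$1$ in $M_U\triangle M_V$, so the edge at $p$ must come from the matching that saturates the side of $p$; alternation along the path and bipartiteness then force an inconsistency at the other endpoint regardless of the parity of the path length.
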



Our proof of Proposition~\ref{prop:halfpercolation} in the nonamenable case relies on the following lemma.

\begin{lemma}[Geometric inflation]\label{lem:fat_nonamenable}
    Let $\GG$ be a nonamenable group, $S\Subset \GG$ a generating set, and $r\in\NN$. There exists $n\in\NN$ such that, for every $r$-covering subset $\Delta$ of~$\GG$, there exists a bijection $\varphi \colon \GG \to \Delta$ such that $\abs{g^{-1}\varphi(g)}_S\leq n$ for every~$g\in \GG$. 
\end{lemma}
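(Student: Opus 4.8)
\textbf{Proof strategy for Lemma~\ref{lem:fat_nonamenable}.}

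The plan is to build a locally finite bipartite graph between two copies of $\GG$ — one copy playing the role of the full group and the other the role of the covering set $\Delta$ — and to apply Hall's matching theorem (Theorem~\ref{thm:Hall-matching}) to extract the desired bijection with uniformly bounded displacement. First I would use nonamenability to fix the key parameter: since $\GG$ is nonamenable, by the elementary computation recalled above there exists a finite set $K\Subset\GG$ with $\abs{FK}> 2\abs{F}$ for every $F\Subset\GG$ (a doubling constant suffices, though any constant $>1$ works in principle; a factor of $2$ gives comfortable slack). Let $\rho$ be large enough that $B_\rho\supseteq K\cup B_r$, and set $n\isdef 2\rho$ (or whatever bound the counting forces — the exact value is immaterial).

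Given an $r$-covering set $\Delta$, define the bipartite graph $\Gamma$ with left vertex set $U=\GG$, right vertex set $V=\Delta$ (both regarded as abstract sets, so $U$ and $V$ are disjoint even though $\Delta\subseteq\GG$), and an edge between $g\in U$ and $d\in V$ whenever $\abs{g^{-1}d}_S\leq n$. This graph is locally finite because balls in the Cayley graph are finite. A perfect matching $\varphi\colon U\to V$ is then exactly a bijection $\GG\to\Delta$ with $\abs{g^{-1}\varphi(g)}_S\leq n$ for all $g$, so it remains to verify the two Hall conditions. For the \emph{right} Hall condition, let $B\Subset V=\Delta$; since $n\geq r$ and $\Delta$ is $r$-covering — no, more simply: for each $d\in B$ the vertex $d$ itself lies in $U$ with $\abs{d^{-1}d}_S=0\leq n$, so $\neighbours(d)\ni d$ and more generally $\neighbours(B)\supseteq B$ (as subsets of $\GG$), giving $\abs{\neighbours(B)}\geq\abs{B}$ trivially. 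The substance is in the \emph{left} Hall condition.

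For the left Hall condition, take $A\Subset U=\GG$. Its neighborhood is $\neighbours(A)=\Delta\cap AB_n$ (using symmetry of $S$, so $\abs{g^{-1}d}_S\leq n\iff d\in gB_n$). The crucial point is that an $r$-covering set meets every ball of radius $r$, hence every translate $gB_r$ contains a point of $\Delta$; consequently, since $B_n\supseteq B_\rho B_\rho \supseteq K B_r$ (as $n=2\rho$ and $K\cup B_r\subseteq B_\rho$), a counting argument shows $\Delta$ is fairly dense in $AB_n$. Concretely, $AK B_r\subseteq AB_n$, and every element of $AK$ is within distance $r$ of some point of $\Delta$, so $\abs{\Delta\cap AB_n}$ is at least a fixed fraction of $\abs{AK}$; more carefully, one covers $AK$ by the sets $dB_r$ with $d\in\Delta\cap AB_n$, and since each ball $B_r$ has at most $\abs{B_r}$ elements, $\abs{AK}\leq\abs{B_r}\cdot\abs{\Delta\cap AB_n}$. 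Combining with nonamenability, $\abs{\neighbours(A)}=\abs{\Delta\cap AB_n}\geq \abs{AK}/\abs{B_r} > 2\abs{A}/\abs{B_r}$. This is not yet $\geq\abs{A}$ unless the doubling constant beats $\abs{B_r}$ — so the right move is to choose $K$ at the outset with $\abs{FK}>\abs{B_r}\cdot\abs{F}$ for all $F$ (again possible by nonamenability, applied after $r$, hence after $\rho$ and $\abs{B_r}$, are fixed), which yields $\abs{\neighbours(A)}\geq\abs{AK}/\abs{B_r}>\abs{A}$.

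\textbf{Main obstacle.} The delicate ordering of choices is the real content: $r$ is given, which fixes $\rho$ and $\abs{B_r}$; only \emph{then} do we invoke nonamenability to get $K\subseteq B_\rho$ with expansion constant exceeding $\abs{B_r}$ — but enlarging the demanded expansion constant may force $K$ to be larger, which could push $K$ outside $B_\rho$ and force $\rho$ (hence $\abs{B_r}$, hence the required constant) to grow further, a potential circularity. The fix is to note that nonamenability gives, for the \emph{fixed} target constant $C=\abs{B_r}+1$, \emph{some} finite $K$ with $\abs{FK}\geq C\abs{F}$; we then redefine $\rho$ to be large enough that $B_\rho\supseteq K\cup B_r$ and set $n=2\rho$ — crucially, $\abs{B_r}$ was computed from the \emph{original} $r$ and does not change when we enlarge $\rho$, so there is no circularity. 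Writing this cleanly, and double-checking that $B_n\supseteq KB_r$ with the chosen $n$, is the part requiring care; the matching-theoretic wrapper is then routine.
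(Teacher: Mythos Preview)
Your proposal is correct and follows essentially the same approach as the paper: build the bipartite graph between $\GG$ and $\Delta$ with edges given by distance $\leq n$, verify the right Hall condition trivially via $d\in\neighbours(d)$, and verify the left Hall condition by combining the $r$-covering property (each ball of radius $r$ meets $\Delta$, so $\abs{\Delta\cap AB_n}\geq\abs{AK}/\abs{B_r}$) with the nonamenability expansion $\abs{AK}\geq\abs{B_r}\,\abs{A}$. Your handling of the order of choices and the requirement $KB_r\subseteq B_n$ is in fact slightly more careful than the paper's write-up, which only asks for $K\subseteq B_n$; your version makes the covering inequality in the left Hall step airtight.
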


\begin{proof}
    As usual, we let $B_n$ denote the centered ball of radius~$n$ in $\cayley(\GG,S)$.
    Since $\GG$ is nonamenable, we can find a finite set $K\Subset \GG$ such that for every $A\Subset \GG$, we have
    \begin{align*}
        \abs{AK} &\geq \abs{B_r}\cdot\abs{A} \;.
    \end{align*}
    Let $n\in \NN$ be large enough such that $K\subseteq B_n$, and consider the locally finite bipartite graph $\Gamma_n$, where the vertices are given by the disjoint union $\GG \sqcup \Delta$, and $(g,h)\in G \times \Delta$ is an edge if and only if $\abs{g^{-1}h}_S\leq n$. A map $\varphi$ which satisfies the requirements of the lemma is then given by a perfect matching in $\Gamma_n$.  Thus, to prove the lemma it suffices to verify the hypotheses of Hall's matching theorem (Theorem~\ref{thm:Hall-matching}).
    
    Clearly, for every $B\Subset\Delta$, the set $\neighbours(B)\subseteq\GG$ is at least as large as~$B$ because $(h,h)$ is an edge for every $h\in\Delta$.
    Hence, the right Hall condition is satisfied.

    To verify the left Hall condition, consider $A \Subset \GG$ and notice that
    \begin{align*}
        \neighbours(A) &= \Delta \cap \bigcup_{g \in A} B_n(g) = \Delta \cap A B_n \;.
    \end{align*}
    Since $\Delta$ is $r$-covering, we have
    \begin{align*}
        \abs{\neighbours(A)} &= \abs{\Delta \cap A B_n} \geq \frac{\abs{AB_n}}{\abs{B_r}} \;.
    \end{align*}
    As $K\subseteq B_n$, it follows that $\abs{AB_n}\geq \abs{AK} \geq \abs{A}\cdot\abs{B_r}$.
    Therefore, $\abs{\neighbours(A)} \geq \abs{A}$, which means the left Hall condition is also satisfied.
\end{proof}

Note that if in the lemma above $\Delta$ is also $(2\ell+1)$-separated for some $\ell \in \NN$, then the balls $B_\ell\big(\varphi(g)\big)$ (for $g \in \GG$) are disjoint and $\abs{\varphi(g)^{-1}\varphi(gs)}_S\leq 2n+1$ for every $s \in S$, thus the pair $(\Delta,\varphi)$ represents a ``geometric inflated copy of $\GG$''. We remark that variants of this lemma hold in uniformly discrete metric spaces with bounded geometry~\cite{Whyte1999}.  

\begin{proposition}[Nonamenable case]\label{prop:infamous_nonamenable}
    Let $\GG$ be a nonamenable group with a generating set $S\Subset \GG$, and let $0<\alpha<\beta<1$.
    There exists a generating set $S'\Subset \GG$ of $\GG$ such that
    \begin{align*}
        \mu_\beta\big(\{x\in\{\symb{0},\symb{1}\}^\GG: \textup{$x$ percolates in $\cayley(\GG,S)$}\}\big)
        &\leq
        \mu_\alpha\big(\{y\in\{\symb{0},\symb{1}\}^\GG: \textup{$y$ percolates in $\cayley(\GG,S')$}\}\big) \;.
    \end{align*}
\end{proposition}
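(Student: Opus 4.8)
The plan is to build $S'$ so that $\cayley(\GG,S')$ is, roughly, a \emph{fattened} copy of $\cayley(\GG,S)$ in which each site is replaced by a pairwise disjoint ball and neighbouring balls are completely joined to one another. The gain comes from the elementary observation that, at any positive density $\alpha$, a ball of sufficiently large radius contains an open site with probability arbitrarily close to~$1$; this lets a percolating configuration at the higher density $\beta$ be ``lifted'' to a percolating configuration at the lower density $\alpha$ on the fattened graph.

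Concretely, first I would fix $\ell\in\NN$ large enough that $(1-\alpha)^{\abs{B_\ell}}\leq 1-\beta$, which is possible since $\GG$ is infinite and hence $\abs{B_\ell}\to\infty$. Then, as in the remark following Lemma~\ref{lem:fat_nonamenable}, I would take $\Delta$ to be a maximal $(2\ell+1)$-separated subset of $\GG$ (so $\Delta$ is automatically $(2\ell+1)$-covering) and apply Lemma~\ref{lem:fat_nonamenable} with $r\isdef 2\ell+1$ to obtain $n\in\NN$ and a bijection $\varphi\colon\GG\to\Delta$ with $\abs{g^{-1}\varphi(g)}_S\leq n$ for every~$g$. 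Put $D_g\isdef B_\ell(\varphi(g))$; by the remark the balls $(D_g)_{g\in\GG}$ are pairwise disjoint, $\abs{D_g}=\abs{B_\ell}$ for every $g$, and $\abs{\varphi(g)^{-1}\varphi(gs)}_S\leq 2n+1$ for every $s\in S$. Finally I would set $S'\isdef B_{2\ell+2n+1}$, a finite symmetric generating set containing $S$. The role of this choice is that whenever $g\in\GG$, $s\in S$, $u\in D_g$ and $v\in D_{gs}$, the triangle inequality gives $\abs{u^{-1}v}_S\leq \ell+(2n+1)+\ell\leq 2\ell+2n+1$, so $u$ and $v$ are adjacent in $\cayley(\GG,S')$; in other words, in $\cayley(\GG,S')$ any two neighbouring fattened sites are completely joined.

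Next I would set up a coupling of $\mu_\beta$ on $\{\symb{0},\symb{1}\}^\GG$ (the environment for $\cayley(\GG,S)$) and $\mu_\alpha$ on $\{\symb{0},\symb{1}\}^\GG$ (the environment for $\cayley(\GG,S')$). Sample $y\sim\mu_\alpha$ and, for each $g$, let $X_g$ be the indicator that $D_g$ contains at least one $y$-open site. Since the $D_g$ are pairwise disjoint, the family $(X_g)_{g\in\GG}$ is independent, with each $X_g$ a Bernoulli variable of parameter $q\isdef 1-(1-\alpha)^{\abs{B_\ell}}\geq\beta$. Using independent auxiliary uniform random variables I would thin $(X_g)$ to an independent family $(x_g)_{g\in\GG}$ of Bernoulli variables of parameter $\beta$ with $x_g\leq X_g$ for all $g$; then $x\isdef(x_g)_g$ has law $\mu_\beta$. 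It remains to check that on this coupling $\{x\text{ percolates in }\cayley(\GG,S)\}\subseteq\{y\text{ percolates in }\cayley(\GG,S')\}$: given an infinite $x$-open self-avoiding path $(g_0,g_1,g_2,\dots)$ in $\cayley(\GG,S)$, from $x_{g_i}=\symb{1}$ we get $X_{g_i}=\symb{1}$, hence a $y$-open site $v_i\in D_{g_i}$; the $v_i$ are pairwise distinct (disjoint balls) and consecutive ones are adjacent in $\cayley(\GG,S')$ (neighbouring fattened sites are completely joined), so $(v_0,v_1,v_2,\dots)$ is an infinite $y$-open self-avoiding path in $\cayley(\GG,S')$. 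Taking probabilities then yields the inequality asserted in the proposition.

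I do not anticipate a serious obstacle, since Lemma~\ref{lem:fat_nonamenable} already isolates the only use of nonamenability. The genuinely delicate point — and the step I would be most careful about — is the choice of ``good'' event for a fattened site: one must use ``$D_g$ contains some open site'' (whose probability tends to $1$ as $\ell\to\infty$) rather than the naive ``$D_g$ is entirely open'' (whose probability tends to $0$), and one must arrange neighbouring fattened sites to form a complete bipartite join so that no within-block connectivity is needed. This is precisely what converts the geometric inflation of Lemma~\ref{lem:fat_nonamenable} into a lowering of the percolation threshold rather than the reverse.
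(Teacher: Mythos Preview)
Your proposal is correct and follows essentially the same route as the paper: use Lemma~\ref{lem:fat_nonamenable} to place pairwise disjoint balls $B_\ell(\varphi(g))$ indexed bijectively by~$\GG$, take $S'$ to be a large ball so that neighbouring fattened sites are completely joined, and observe that the indicator ``$B_\ell(\varphi(g))$ contains an $\alpha$-open site'' is i.i.d.\ Bernoulli with parameter $1-(1-\alpha)^{\abs{B_\ell}}\geq\beta$. The only cosmetic difference is that you phrase the last step as an explicit thinning coupling, whereas the paper expresses it via the pushforward map $\zeta$ and then invokes monotonicity of the percolation probability in the Bernoulli parameter; these are equivalent formulations of the same argument.
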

\begin{proof}
    As before, we let $B_n$ denote the centered ball of radius $n$ in $\cayley(\GG,S)$.

    Let $\ell,r\in\NN$ be constants to be chosen later.
    Let $\Delta\subseteq\GG$ be $(2\ell+1)$-separated and $r$-covering. 
    By Lemma~\ref{lem:fat_nonamenable}, there exists a bijection $\varphi\colon \GG \to \Delta$ and an integer $m \geq 0$ such that $\varphi(g)\in gB_m$ for every $g\in\GG$.
    Consider the function $\zeta:\{\symb{0},\symb{1}\}^\GG\to\{\symb{0},\symb{1}\}^\GG$, where
    \begin{align*}
        \zeta(x)_g &\isdef
            \begin{cases}
                \symb{1}    & \text{$x_h=\symb{1}$ for some $h\in \varphi(g)B_\ell$,} \\
                \symb{0}    & \text{otherwise.}
            \end{cases}
    \end{align*}

    First, observe that 
    if $\zeta(x)$ percolates in $\cayley(\GG,S)$, then $x$ percolates in $\cayley(\GG,S')$, where $S'\isdef B_\ell^{-1}B_m^{-1}SB_m B_\ell=B_{2(\ell+m)+1}$.
    Indeed, suppose $(g_k)_{k=1}^\infty$ is a self-avoiding path in $\cayley(\GG,S)$ with $\zeta(x)_{g_k}=\symb{1}$ for all~$k$.
    Then, for each $k$, there exists an element $h_k\in\varphi(g_k)B_\ell$ such that $x_{h_k}=\symb{1}$.  Since $\varphi(g_k)\in g_k B_m$, we have $h_k\in g_kB_mB_\ell$.
    It follows that $h_{k+1}\in h_k(B_m B_\ell)^{-1}SB_mB_\ell = h_k S'$, thus $(h_k)_{k=1}^\infty$ is a path in $\cayley(\GG,S')$.  Since $\Delta$ is $(2\ell+1)$-separated, the sets $\varphi(g_k)B_\ell$ are disjoint, hence $(h_k)_{k=1}^\infty$ is self-avoiding.
    Therefore, $x$ percolates in $\cayley(\GG,S')$ as claimed.

    Next, observe that $\zeta\mu_\alpha=\mu_{\beta'}$ where $\beta'\isdef 1 - (1-\alpha)^{\abs{B_\ell}}$.  In other words, if $\xv{x}$ is distributed according to $\mu_\alpha$, then $\zeta(\xv{x})$ is distributed according to~$\mu_{\beta'}$.
    Indeed, since $\Delta$ is $(2\ell+1)$-separated, the sets $\varphi(g)B_\ell$ (for $g\in\GG$) are disjoint, which implies the values $\xv{x}_g$ (for $g\in\GG$) are independent.
    Furthermore, the probability that $\zeta(\xv{x})_g=\symb{1}$ is clearly $1-(1-\alpha)^{\abs{B_\ell}}=\beta'$.

    Combining the above two observations, we obtain that
    \begin{align*}
        \mu_{\beta'}\big(\{y\in\{\symb{0},\symb{1}\}^\GG: \textup{$y$ percolates in $\cayley(\GG,S)$}\}\big)
        &=
        \mu_\alpha\big(\{x\in\{\symb{0},\symb{1}\}^\GG: \textup{$\zeta(x)$ percolates in $\cayley(\GG,S)$}\}\big) \\
        &\leq
        \mu_\alpha\big(\{x\in\{\symb{0},\symb{1}\}^\GG: \textup{$x$ percolates in $\cayley(\GG,S')$}\}\big) \;.
    \end{align*}

    As nonamenable groups are infinite, we can now choose $\ell$ large enough such that $\beta'\geq\beta$ and let $r\geq 2\ell+1$ to guarantee the existence of~$\Delta$.
    The result then follows from the monotonicity of the percolation probability on the Bernoulli parameter.
\end{proof}

\begin{remark}[Alternative approach]
Benajmini and Schramm proved that
\begin{align*}
p_c(\cayley(\GG,S)) &\leq \frac{1}{1+h\big({\cayley(\GG,S)}\big)} \;,
\intertext{where} 
    h\big({\cayley(\GG,S)}\big) &= \inf_{F\Subset \GG}\frac{\abs{FS\setminus F}}{\abs{F}}
\end{align*}
is the \Emph{Cheeger constant} of the graph $\cayley(\GG,S)$~\cite[Theorem 2]{BS1996}.
Note that when $\GG$ is nonamenable, by choosing the set of generators~$S$ appropriately, the Cheeger constant can be made arbitrarily large.  This provides an alternative proof of Proposition~\ref{prop:infamous_nonamenable}. \end{remark}

\subsubsection{The amenable case}

Let $\GG$ be a group. A \Emph{tile set} is a finite collection $\mathcal{T} = \{T_1,\dots,T_n\}$ of finite subsets of $\GG$ which contain the identity.
A \Emph{tiling} of $\GG$ by $\mathcal{T}$ is a map $\tau \colon \GG \to  \mathcal{T} \cup \{\varnothing\}$ such that:
\begin{enumerate}[label={(\roman*)}]
    \item ($\tau$ is pairwise-disjoint) For every $g,h \in \GG$, if $g \neq h$ then $g\tau(g) \cap h\tau (h) =\varnothing$.
    \item ($\tau$ covers $\GG$) For every $g \in \GG$, there exists an $h \in \GG$ such that $g \in h\tau(h)$.
\end{enumerate}

We shall use the following result of Downarowicz, Huczek and Zhang~\cite[Theorem 4.3]{DownarowiczHuczekZhang2019}.

\begin{knowntheorem}[Amenable tilings]\label{teorema_tiling_exacto}
	Let $\GG$ be a countable amenable group. For every $F\Subset\GG$ and $\delta >0$,
    there exists a tiling of $\GG$ by a tile set~$\mathcal{T}$ whose elements are all $(F,\delta)$-invariant.
\end{knowntheorem}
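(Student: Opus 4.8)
The plan is to follow the strategy of Downarowicz, Huczek and Zhang~\cite{DownarowiczHuczekZhang2019}: pass from \emph{approximate} tilings, which amenability supplies cheaply, to an \emph{exact} tiling by finitely many $(F,\delta)$-invariant shapes.

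First I would invoke the Ornstein--Weiss quasi-tiling machinery. Fix a small $\varepsilon>0$ and a strong invariance parameter $\delta'\ll\delta$. Using amenability one produces a finite chain $T_1\subseteq T_2\subseteq\cdots\subseteq T_k$ of $(F,\delta')$-invariant subsets of $\GG$, each vastly more invariant than the previous, together with a family of translates of the $T_i$ that is \emph{$\varepsilon$-disjoint} (each translate has a ``core'' comprising a $(1-\varepsilon)$-fraction of it, the cores being pairwise disjoint) and that covers a $(1-\varepsilon)$-fraction of every sufficiently invariant region of $\GG$. Shaving the small overlaps turns this into an honest \emph{packing}: pairwise disjoint translates of $(F,\delta')$-invariant sets covering $\GG$ up to density $\varepsilon$, uniformly. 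This part is soft --- it is the classical argument of greedily packing the largest shape, then the next shape in the leftover, and so on.

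The real work, and the content of~\cite{DownarowiczHuczekZhang2019}, is to upgrade such a packing to an \emph{exact} tiling without blowing up the number of shapes. The obstacle is that the uncovered region $R$ of a near-perfect packing, although of density $\leq\varepsilon$, is typically \emph{porous}: it contains no large invariant set, so it cannot be tiled from the inside. One must instead \emph{re-carve} the packing near its holes: absorb each small piece of $R$ into a bounded number of adjacent tiles and re-partition them. The crux is to arrange this so that (i) the new tiles remain $(F,\delta)$-invariant, being bounded-size amalgams of the already very invariant original shapes; (ii) only finitely many new shapes ever appear; and (iii) a fixed fraction of the leftover is absorbed in each round, so that iterating yields partial tilings $\tau^{(n)}$, each built from one \emph{fixed} finite family $\mathcal{T}$ of $(F,\delta)$-invariant shapes, with uncovered density at most $\varepsilon^n\to 0$ uniformly over $\GG$. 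I expect this step --- keeping the shape count finite throughout the repair process --- to be the main obstacle; the rest is routine.

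Finally I would pass to the limit. The set $\mathcal{X}$ of pairwise-disjoint maps $\tau\colon\GG\to\mathcal{T}\cup\{\varnothing\}$ is closed in $(\mathcal{T}\cup\{\varnothing\})^\GG$, hence compact, and carries the shift action of $\GG$. Since ``$\tau$ covers all of $\GG$'' is not a closed condition, one cannot simply take a limit point of the $\tau^{(n)}$; instead, averaging them over a F{\o}lner sequence and extracting a weak-$\ast$ limit gives a $\GG$-invariant probability measure $\nu$ on $\mathcal{X}$. Because the uncovered densities tend to $0$ uniformly, the clopen event ``$e$ lies in some tile'' has full $\nu$-measure, hence by $\GG$-invariance so does ``$g$ lies in some tile'' for every $g\in\GG$; therefore $\nu$-almost every $\tau\in\mathcal{X}$ covers all of $\GG$, and any such $\tau$ is an exact tiling of $\GG$ by the finitely many $(F,\delta)$-invariant shapes of $\mathcal{T}$, which is the assertion.
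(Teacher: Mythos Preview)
The paper does not prove this statement at all: it is stated as a \texttt{knowntheorem} and attributed to Downarowicz, Huczek and Zhang~\cite[Theorem 4.3]{DownarowiczHuczekZhang2019}, with no argument given. So there is no ``paper's own proof'' to compare your proposal against.

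That said, your sketch is a reasonable outline of the strategy in the cited reference: Ornstein--Weiss quasi-tilings, an iterative absorption/repair step to upgrade near-perfect packings to exact tilings while keeping the shape family finite, and a passage to the limit via an invariant measure on the space of partial tilings. You correctly identify the finite-shape-count bookkeeping during repair as the crux. One minor caveat: in~\cite{DownarowiczHuczekZhang2019} the limiting object is produced inside a symbolic dynamical system (a subshift of tilings) rather than by averaging individual $\tau^{(n)}$ over a F{\o}lner sequence as you describe, though the spirit is the same.
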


A \Emph{coupling} of two probability measures $p$ and $q$ on measurable spaces $U$ and $V$ refers to a probability measure $r$ on $U\times V$ that has marginals $p$ and $q$, or equivalently, to a pair of random variables with joint distribution~$r$.  We shall also use the following elementary version of Strassen's theorem on the existence of couplings~\cite[Theorem~11]{Strassen1965}, which is equivalent to Hall's matching theorem.
\begin{knowntheorem}[Strassen's coupling]
\label{thm:strassen:finite}
    Let $p$ and $q$ be probability measures on finite sets $U$ and $V$ respectively, and let $\prec$ be a binary relation on $U\times V$.
    There exists a coupling $r$ of $p$ and $q$ satisfying $r\big(\{(a,b): a\prec b\}\big)=1$ if and only if for every $A\subseteq U$, we have $p(A)\leq q\big(\{b\in V: \text{$a\prec b$ for some $a\in A$}\}\big)$.
\end{knowntheorem}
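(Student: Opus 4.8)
The plan is to prove both implications from scratch, reducing the substantive direction to Hall's matching theorem (Theorem~\ref{thm:Hall-matching}). The forward implication is mere bookkeeping: if $r$ is a coupling of $p$ and $q$ concentrated on $\{(a,b):a\prec b\}$, fix $A\subseteq U$ and set $B\isdef\{b\in V:\text{$a\prec b$ for some $a\in A$}\}$; any pair $(a,b)$ with $a\in A$ and $r(a,b)>0$ has $a\prec b$, hence $b\in B$, so $p(A)=r(A\times V)=r(A\times B)\leq r(U\times B)=q(B)$, which is exactly the claimed inequality.

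For the converse I would first assume that $p$ and $q$ are rational-valued. Choosing a common denominator $N$, write $p(a)=m_a/N$ and $q(b)=n_b/N$, and build the finite bipartite graph $\Gamma$ whose left part $U'$ consists of $m_a$ copies of each $a\in U$, whose right part $V'$ consists of $n_b$ copies of each $b\in V$ (so $\abs{U'}=\abs{V'}=N$), and in which a copy of $a$ is joined to a copy of $b$ precisely when $a\prec b$. A perfect matching $\sigma\colon U'\to V'$ then produces a coupling via $r(a,b)\isdef\frac1N\cdot\#\{\text{copies of $a$ sent by $\sigma$ to a copy of $b$}\}$: the marginals work out to $m_a/N=p(a)$ and $n_b/N=q(b)$, and $r(a,b)>0$ forces $a\prec b$. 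So it remains to verify the two hypotheses of Theorem~\ref{thm:Hall-matching}. If $A'\subseteq U'$ projects onto $A\subseteq U$, then $\abs{A'}\leq N\,p(A)$, while $\neighbours(A')$ is the set of all copies of elements of $B\isdef\{b:\text{$a\prec b$ for some $a\in A$}\}$, so $\abs{\neighbours(A')}=N\,q(B)\geq N\,p(A)\geq\abs{A'}$ by the hypothesis; this gives the left Hall condition. For the right one, let $B'\subseteq V'$ project onto $B\subseteq V$, and apply the hypothesis to $A\isdef U\setminus\{a:\text{$a\prec b$ for some $b\in B$}\}$: no element of $A$ is $\prec$-related to any element of $B$, so $p(A)\leq q(V\setminus B)$, which rearranges to $q(B)\leq p(\{a:\text{$a\prec b$ for some $b\in B$}\})$, i.e.\ $\abs{B'}\leq\abs{\neighbours(B')}$. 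Hence $\Gamma$ has a perfect matching, and the rational case is done.

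To remove the rationality assumption, I would observe that the set of pairs of probability measures on $U$ and $V$ satisfying the inequalities in the statement is a bounded polyhedron cut out by finitely many inequalities with rational coefficients, and therefore contains rational points arbitrarily close to $(p,q)$. Picking rational $(p_k,q_k)$ in this polytope with $(p_k,q_k)\to(p,q)$, the rational case supplies couplings $r_k$ concentrated on $\prec$; since the $r_k$ lie in the compact set $[0,1]^{U\times V}$, a subsequence converges to some $r$, and the conditions $r\geq 0$, $r(a,b)=0$ for $a\not\prec b$, and the two marginal constraints all persist in the limit, so $r$ is the desired coupling. The step I expect to need the most care is the right Hall condition: the hypothesis only constrains neighbourhoods of subsets of $U$, whereas a perfect matching between parts of equal size also requires control on the $V$-side, and it is the complementation argument above (or, alternatively, an appeal to the finite form of Hall's theorem, which needs only the left condition when the two sides are equinumerous) that bridges this gap. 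The passage from rational to real data is routine but should be made explicit via the density-and-compactness argument just sketched.
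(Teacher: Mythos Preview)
The paper does not actually prove this statement; it records it as a known result, citing Strassen's original article and noting in passing that it is ``equivalent to Hall's matching theorem''. Your proposal is correct and carries out precisely the reduction the paper alludes to: you blow up each atom into a number of copies proportional to its mass, verify both Hall conditions for the resulting bipartite graph (the right one via the clean complementation trick), extract a perfect matching, and then pass from rational to arbitrary marginals by a density-and-compactness argument. The approximation step is sound because the feasibility region is a rational polytope (finitely many linear constraints with integer coefficients), so its vertices are rational and rational points are dense in it. One could alternatively bypass the rational approximation entirely by invoking max-flow/min-cut or LP duality with real capacities, but your route is elementary, self-contained, and matches the spirit of the paper's remark.
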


\begin{proposition}[Amenable case]\label{prop:infamous_amenable}
    Let $\GG$ be an infinite amenable group with a generating set~$S\Subset \GG$, and let $0<\alpha<\beta<1$.
    There exists a generating set $S'\Subset \GG$ of $\GG$ such that
    \begin{align*}
        \mu_\beta\big(\{x\in\{\symb{0},\symb{1}\}^\GG: \textup{$x$ percolates in $\cayley(\GG,S)$}\}\big)
        &\leq
        \mu_\alpha\big(\{y\in\{\symb{0},\symb{1}\}^\GG: \textup{$y$ percolates in $\cayley(\GG,S')$}\}\big) \;.
    \end{align*}
\end{proposition}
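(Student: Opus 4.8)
The plan is to run the argument of Proposition~\ref{prop:infamous_nonamenable} with the ``geometric inflation'' of Lemma~\ref{lem:fat_nonamenable} replaced by an amenable tiling. We may assume that $\GG$ is not virtually cyclic, for otherwise $p_\critical(\cayley(\GG,S))=1$, the left-hand side of the claimed inequality is $0$, and $S'\isdef S$ works; hence by Theorem~\ref{thm:percolation} we have $p_\critical(\cayley(\GG,S))<1$. It then suffices to produce a generating set $S'$ for which $\mu_\alpha$ percolates in $\cayley(\GG,S')$: if $\mu_\beta$ percolates in $\cayley(\GG,S)$ the left-hand side equals $1$, which then equals the right-hand side, and otherwise the left-hand side is $0$. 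Fix $M\in\NN$ (to be chosen last). Using Theorem~\ref{teorema_tiling_exacto} with $F\isdef B_M$ and $\delta$ small enough that each tile has non-empty $B_M$-interior (via the double counting recalled before Proposition~\ref{prop:halfpercolation}), fix a tiling $\tau$ of $\GG$ by a finite tile set, let $C\isdef\{c\in\GG:\tau(c)\neq\varnothing\}$ be the set of tile centres, and put $R_c\isdef c\tau(c)$ for $c\in C$. The sets $R_c$ are pairwise disjoint, cover $\GG$, have diameter at most some $d=d(M)$, and each has cardinality at least $\abs{B_M}$.

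On $C$ form the \emph{tile graph} $H$, with $c\sim_H c'$ whenever some edge of $\cayley(\GG,S)$ joins $R_c$ to $R_{c'}$; then $H$ is connected, locally finite, and has the localization property that $H$-distant centres correspond to $\GG$-distant tiles (an $S$-path of length $k$ between $R_c$ and $R_{c'}$ induces an $H$-walk of length $\leq k$ from $c$ to $c'$). Set $S'\isdef B_{2d+1}$, so that any two elements of two $H$-adjacent tiles are $S'$-adjacent. Call a centre $c$ \emph{occupied} in $y\in\{\symb 0,\symb 1\}^\GG$ if $y$ has an open site in $R_c$; since the $R_c$ are disjoint and $\mu_\alpha$ is a product measure, the occupied centres form a Bernoulli site percolation on $H$ with marginals $1-(1-\alpha)^{\abs{R_c}}\geq\alpha'\isdef 1-(1-\alpha)^{\abs{B_M}}$, and this field stochastically dominates $\mu_{\alpha'}$ on $\{\symb 0,\symb 1\}^C$. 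Moreover, if the occupied centres percolate in $H$, choosing one open site of $y$ in each tile along an infinite self-avoiding $H$-path produces an infinite self-avoiding open path in $\cayley(\GG,S')$. Hence $\mu_\alpha(\{y:\text{$y$ percolates in }\cayley(\GG,S')\})\geq\mu_{\alpha'}(\{z:\text{$z$ percolates in }H\})$. Since $\alpha'\to 1$ as $M\to\infty$, it remains to show that $\mu_{\alpha'}$ percolates in $H$ once $M$ is large; equivalently, that $p_\critical(H)$ is bounded away from $1$ by a quantity that does \emph{not} deteriorate as the tiles grow.

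This last point carries the real content, and it is where amenability re-enters. One approach is a renormalization in the spirit of Pisztora: fix $\beta_0\in(p_\critical(\cayley(\GG,S)),1)$, so $\mu_{\beta_0}$ percolates in $\cayley(\GG,S)$, fix a radius $r$ independent of the tiling, and call a centre $c$ \emph{good} in $x$ if, within the $r$-neighbourhood of $R_c$, there is an open cluster linking $R_c$ to every tile $H$-adjacent to it. By the localization property, the good-centre events depend only on $x$ near the tile and have dependence range at most $2r$ in $H$, uniformly in $M$; and taking $\beta_0$ close to $1$ (rather than just above $p_\critical$) makes the probability that a given tile fails to be good as small as desired, avoiding any need for sharp supercritical estimates. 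A standard domination theorem for finite-range fields (Liggett--Schonmann--Stacey, or Strassen's coupling, Theorem~\ref{thm:strassen:finite}) then compares the good-centre field, which percolates because it traces the infinite $\beta_0$-cluster, with Bernoulli percolation on $H$, yielding $p_\critical(H)<\alpha'$ for $M$ large. Alternatively, one can argue directly that $p_\critical(H)$ admits a uniform upper bound below $1$ from the fact that $H$ is quasi-isometric to $\GG$ — which has super-linear growth, being non-virtually-cyclic — by invoking the graph-theoretic isoperimetric criterion underlying Theorem~\ref{thm:percolation} on a bounded-degree ``skeleton'' of $H$ (and then using that adding edges does not harm site percolation). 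Once $\mu_{\alpha'}$ percolates in $H$, the displayed inequality gives $p_\critical(\cayley(\GG,S'))\leq\alpha$; rerunning with a slightly smaller value of $\alpha$ yields the strict inequality needed for Proposition~\ref{prop:halfpercolation}.

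The main obstacle is precisely the \emph{uniformity} in the last step: the maximal degree of $H$ grows with $\abs{B_M}$, so either the probability that a tile is good, or the isoperimetric bound for the skeleton of $H$, must be controlled well enough to compensate for that growth; it is for this reason that working at a reference density $\beta_0$ close to $1$ (where the crossing-and-linking event is essentially forced) is convenient. By contrast, the $\ZZ^d$ case is immediate because there the tile graph is again $\ZZ^d$, whose percolation threshold is a fixed constant independent of the tile size. Everything else — the tiling, the two coarsenings, and the bookkeeping of radii — is routine.
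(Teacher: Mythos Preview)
Your reduction via the zero--one law is valid, and the tile--coarsen--compare outline is the right shape, but the argument has a genuine gap exactly where you flag it: you do not establish that $p_\critical(H)$ stays bounded away from~$1$ uniformly in~$M$, and neither proposed remedy closes this. For the renormalization approach, the ``good centre'' event asks that an open cluster in the $r$-neighbourhood of $R_c$ reach every $H$-neighbour of~$R_c$; but both the number of such neighbours and the volume of the region grow with~$M$, so for any \emph{fixed} $\beta_0<1$ the probability that $c$ is good degrades as $M\to\infty$, while letting $\beta_0$ depend on~$M$ destroys the uniformity you need when applying Liggett--Schonmann--Stacey (whose threshold depends on the degree of~$H$, which also grows with~$M$). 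For the quasi-isometry approach, the quasi-isometry constants between $H$ and $\GG$ blow up with~$M$, so the isoperimetric criterion behind Theorem~\ref{thm:percolation} yields only $p_\critical(H)<1$ for each fixed~$M$, not a uniform bound; and the Panagiotis--Severo universal gap applies to Cayley graphs, which $H$ is not.

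The paper's proof sidesteps $p_\critical(H)$ altogether, and the missing idea is a second, \emph{asymmetric} coarsening. One defines two maps $\zeta,\eta\colon\{\symb 0,\symb 1\}^\GG\to\{\symb 0,\symb 1\}^\Delta$: set $\zeta(x)_c=\symb 1$ if $x$ has an open site in the $S$-\emph{boundary} $\partial_S\big(c\tau(c)\big)$, and $\eta(y)_c=\symb 1$ if $y$ has an open site \emph{anywhere} in $c\tau(c)$. Then $x$ percolating in $\cayley(\GG,S)$ forces $\zeta(x)$ to percolate in the tile graph (an infinite $S$-path must repeatedly hit tile boundaries), and $\eta(y)$ percolating in the tile graph forces $y$ to percolate in $\cayley(\GG,S')$ --- this second implication is essentially your occupied-centre argument. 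The point is that one can couple $\mu_\beta$ and $\mu_\alpha$ tile-by-tile (via Strassen, Theorem~\ref{thm:strassen:finite}) so that $\zeta(x)\leq\eta(y)$ almost surely: this requires only $(1-\beta)^{\abs{\partial_S T}}\geq(1-\alpha)^{\abs{T}}$, i.e.\ $\abs{\partial_S T}/\abs{T}<\log(1-\alpha)/\log(1-\beta)$, which is precisely what $(S,\delta)$-invariance of the tiles provides for $\delta$ small. Thus amenability enters not to control $p_\critical(H)$ but to make the tile \emph{boundary} small relative to the tile, so that a $\beta$-Bernoulli on the boundary is stochastically dominated (at the level of ``is the tile occupied?'') by an $\alpha$-Bernoulli on the whole tile. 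No information about percolation on~$H$ is needed beyond the trivial sandwich $\zeta(x)\leq\eta(y)$.
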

\begin{proof}
    The proof is via a coupling argument.
    
    Let $\delta >0$ be a constant to be determined later.
    By Theorem~\ref{teorema_tiling_exacto}, there exists a finite tile set $\mathcal{T}= \{T_1,\dots,T_k\}$ where every $T_i$ is $(S,\delta)$-invariant and $\GG$ admits a tiling $\tau \colon \GG \to \mathcal{T} \cup \{\varnothing\}$.
    Let $\Delta\isdef\tau^{-1}(\mathcal{T})$ be the set of centers of the tiles in~$\tau$.  This set is naturally endowed with a graph structure in which there is an edge from $g\in\Delta$ to $h\in\Delta$ if $g\tau(g)S\cap h\tau(h)\neq\varnothing$.
    Let $E$ denote the edges of this graph.
    Set $S'\isdef\bigcup_{(g,h)\in E} \big(g\tau(g)\big)^{-1}h\tau(h)$ so that for every $(g,h)\in E$ and $g'\in g\tau(g)$, we have $g'S\supseteq h\tau(h)$.  Note that $S'$ is finite because up to translations, there are only finitely many local configurations of neighboring tiles in~$\tau$.

    Let us now define two functions $\zeta,\eta:\{\symb{0},\symb{1}\}^\GG\to\{\symb{0},\symb{1}\}^\Delta$, where
    \begin{align*}
        \zeta(x)_g &\isdef
            \begin{cases}
                \symb{1}    & \text{if $x_h=\symb{1}$ for some $h\in \partial_S\big(g\tau(g)\big)$,} \\
                \symb{0}    & \text{otherwise.}
            \end{cases} \\
        \eta(y)_g &\isdef
            \begin{cases}
                \symb{1}    & \text{if $y_h=\symb{1}$ for some $h\in g\tau(g)$,} \\
                \symb{0}    & \text{otherwise,}
            \end{cases}
    \end{align*}
    Observe that for $x,y\in\{\symb{0},\symb{1}\}^\GG$:
    \begin{enumerate}[label={(\alph*)}]
        \item If $x\in\{\symb{0},\symb{1}\}^\GG$ percolates in $\cayley(\GG,S)$, then $\zeta(x)$ percolates in the graph $(\Delta,E)$.
        \item If $\eta(y)$ percolates in $(\Delta,E)$, then $y$ percolates in $\cayley(\GG,S')$.
    \end{enumerate}
    Let $\Omega = \{\symb{0},\symb{1}\}^\GG\times \{\symb{0},\symb{1}\}^\GG$. We show that, for a suitable choice of $\delta$, there exists a coupling $\nu$ of $\mu_\beta$ and $\mu_\alpha$ such that
    \begin{align*}
        \nu\big(\{(x,y) \in \Omega: \zeta(x)\leq \eta(y) \}\big) &= 1 \;.
    \end{align*}
    (The inequality $\zeta(x)\leq \eta(y)$ means $\zeta(x)_g\leq \eta(y)_g$ for every $g\in\Delta$.)
    If so, then
    \begin{align*}
        \MoveEqLeft
        \mu_\beta\big(\{ x\in\{\symb{0},\symb{1}\}^\GG: \textup{$x$ percolates in $\cayley(\GG,S)$ } \}\big) \\
        &=
        \nu\big(\{(x,y)\in \Omega: \textup{$x$ percolates in $\cayley(\GG,S)$ and $\zeta(x)\leq \eta(y)$}\}\big) \\
        &\leq
        \nu\big(\{(x,y)\in \Omega: \textup{$\zeta(x)$ percolates in $(\Delta,E)$ and $\zeta(x)\leq \eta(y)$}\}\big) \\
        &\leq
        \nu\big(\{(x,y)\in \Omega: \textup{$\eta(y)$ percolates in $(\Delta,E)$ and $\zeta(x)\leq \eta(y)$}\}\big) \\
        &\leq
        \nu\big(\{(x,y)\in \Omega: \textup{$y$ percolates in $\cayley(\GG,S')$ and $\zeta(x)\leq \eta(y)$}\}\big) \\
        &=
        \mu_\alpha\big(\{y\in\{\symb{0},\symb{1}\}^\GG: \textup{$y$ percolates in $\cayley(\GG,S')$}\}\big) \;,
    \end{align*}
    which would prove the proposition.

    To this end, choose $\delta<\log(1-\alpha)/\log(1-\beta)$ and the tile set $\mathcal{T}$ accordingly so as to ensure that $(1-\beta)^{\abs{\partial_S(T_i)}}\geq(1-\alpha)^{\abs{T_i}}$ for each tile $T_i\in\mathcal{T}$.
    To construct the coupling $\nu$, we couple the marginals of $\mu_\beta$ and $\mu_\alpha$ on each tile of $\tau$ independently.
    For each tile $T_i\in\mathcal{T}$, let $U_i=V_i\isdef\{\symb{0},\symb{1}\}^{T_i}$.
    Define a binary relation $\prec$ on $U_i\times V_i$ by letting $a\prec b$ if and only if either $a_{\partial_S T_i}=\symb{0}^{\partial_S(T_i)}$ or $b_{T_i}\neq\symb{0}^{T_i}$.
    Let $p_i$ and $q_i$ be the Bernoulli measures with parameters $\beta$ and $\alpha$ on $U_i$ and $V_i$ respectively.  From the choice of $\delta$, it follows that
    $p_i(A)\leq q_i\big(\{b\in V_i: \text{$a\prec b$ for some $a\in A$}\}\big)$
    for every $A\subseteq U_i$.  Hence, by Strassen's coupling theorem (Theorem~\ref{thm:strassen:finite}), there exists a coupling $r_i$ of $p_i$ and $q_i$ such that $r_i\big(\{(a,b): a_i\prec b_i\}\big)=1$.  Note that for $x,y\in\{\symb{0},\symb{1}\}^\GG$, we have $\zeta(x)\leq \eta(x)$ if and only if $(g^{-1}x)_{\tau(g)}\prec (g^{-1}y)_{\tau(g)}$ for each $g\in\Delta$, hence the coupling $\nu$ thus constructed has the desired property.
\end{proof}

\subsection{Proof of the characterization}
\label{ssectionproof}

Let us first verify that the percolated additive CAs are not $(\mu_\half\times\widetilde{\mu}_\half)$-sensitive (see Notation~\ref{def:measure}).
This is in fact true in more generality.
\begin{proposition}[Not $\mu$-sensitive]
\label{prop:random-percolated additive:not-mu-sensitive}
       Let $\GG$ be a group generated by $S\Subset \GG$. The percolated additive CA on $\GG$ associated to $S$
       is not sensitive with respect to any fully supported measure.
\end{proposition}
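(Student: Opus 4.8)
The plan is to reduce, via Remark~\ref{rem:musens:sure}, the non-$\mu$-sensitivity of $\varphi$ to the assertion that for every $F\Subset\GG$ there exists at least one configuration $(x,w)\in A^\GG$ with $\mu\big(C((x,w),F,\varphi)\big)>0$. Since $\mu$ is fully supported, it gives positive mass to every non-empty cylinder, so it is enough to produce, for each $F$, a configuration whose stability set with respect to $F$ \emph{contains} a non-empty cylinder. The mechanism I would exploit is that the environment component is frozen by $\varphi$ --- recall that $\varphi^n(x,w)=(\varphi_w^n(x),w)$ for all $n\geq 0$ --- together with the fact that $\varphi_w(x)_g$ is the sum modulo $2$ of the values $x_{gs}$ over the generators $s\in S$ with $w_g(s)=\symb{1}$. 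Thus, whenever every bond emanating from a site $g$ is closed, i.e.\ $w_g(s)=\symb{0}$ for all $s\in S$, one has $\varphi_w(x)_g=\symb{0}$ regardless of $x$, and an immediate induction gives $\varphi_w^n(x)_g=\symb{0}$ for every $n\geq 1$, independently of $x$ and of the values of $w$ away from $g$.

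Concretely: fix $F\Subset\GG$ and let $[p]$ be the cylinder consisting of all $(x,w)\in A^\GG$ with $x_g=\symb{0}$ and $w_g(s)=\symb{0}$ for every $g\in F$ and $s\in S$; let $(x^\circ,w^\circ)$ be the configuration lying in $[p]$ that is identically $\symb{0}$ in both components. For an arbitrary $(y,v)\in[p]$ and any $g\in F$ we have $v_g(s)=\symb{0}$ for all $s\in S$, so $\varphi_v(y)_g=\big(\sum_{s\in S}v_g(s)\cdot y_{gs}\big)\bmod 2=\symb{0}$, and inductively $\varphi_v^n(y)_g=\big(\sum_{s\in S}v_g(s)\cdot\varphi_v^{n-1}(y)_{gs}\big)\bmod 2=\symb{0}$ for all $n\geq 1$; the same identities hold for $(x^\circ,w^\circ)$. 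Since moreover $(y,v)$ and $(x^\circ,w^\circ)$ agree on $F$ (both lie in $[p]$), we conclude $\varphi^n(y,v)_F=\varphi^n(x^\circ,w^\circ)_F$ for all $n\geq 0$; that is, $[p]\subseteq C\big((x^\circ,w^\circ),F,\varphi\big)$. Hence $\mu\big(C((x^\circ,w^\circ),F,\varphi)\big)\geq\mu([p])>0$, and since $F$ was arbitrary, Remark~\ref{rem:musens:sure} yields that $\varphi$ is not $\mu$-sensitive.

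I do not expect any real obstacle here. The only points requiring a little care are the bookkeeping in the definition of $C(\cdot,F,\varphi)$ --- in particular, that agreement is demanded at time $n=0$ as well, which is why the cylinder $[p]$ must also fix the first component to be $\symb{0}$ on $F$ rather than merely close the bonds there --- and the one-line induction showing that the orbit is identically $\symb{0}$ on $F$ from time $1$ onward. One could equivalently argue through Observation~\ref{obs:stability-set}, noting that the whole cylinder $[p]$ lies inside a single stability set.
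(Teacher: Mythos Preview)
Your proposal is correct and follows essentially the same route as the paper: both exhibit the all-zero configuration $(x^\circ,w^\circ)$ and show that the cylinder fixing both components to $\symb{0}$ on $F$ sits inside $C\big((x^\circ,w^\circ),F,\varphi\big)$, then invoke full support of $\mu$ and Remark~\ref{rem:musens:sure}. The only cosmetic difference is that the paper asserts the equality $[(x^\circ,w^\circ)_F]=C\big((x^\circ,w^\circ),F,\varphi\big)$ (the reverse inclusion being immediate from the $n=0$ condition), while you prove just the inclusion needed; your induction on $n$ merely spells out what the paper leaves as ``easy to see.''
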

\begin{proof}
    Let $\varphi\colon A^\GG\to A^\GG$ denote the percolated additive CA on $\GG$ associated to $S$ 
    and $\mu$ be a full-support measure on $A^\GG$.
    Let $(x,w)\in A^\GG$ be a configuration in which $x_g=\symb{0}$ and $w_g(s)=\symb{0}$ for every $g\in\GG$ and $s\in S$.  Clearly, $(x,w)$ is a fixed point of~$\varphi$.  Furthermore, it is easy to see that $[(x,w)_F]=C\big((x,w),F,\varphi\big)$ for every $F\Subset\GG$.  Since $\mu$ is fully supported, it follows that $\mu\big(C\big((x,w),F,\varphi\big)\big)=\mu\big([(x,w)_F]\big)>0$ for every $F\Subset\GG$, which means $\varphi$ is not $\mu$-sensitive (see Remark~\ref{rem:musens:sure}).
\end{proof}

\begin{proposition}[Not $\mu$-equicontinuous]
\label{prop:not-mu-equicontinuous-if-dependence-process-survives}
    Let $\GG$ be a group generated by $S\Subset \GG$, $\nu$ a probability measure on $\{\symb{0},\symb{1}\}^{\GG\times S}$ and $\varphi$ the percolated additive CA on $\GG$ associated to $S$. 
    If the dependence process of $\varphi$ with measure $\nu$ has a positive probability of survival, then $\varphi$ is not $(\mu_\half\times\nu)$-equicontinuous.
\end{proposition}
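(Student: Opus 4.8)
The plan is to produce a set of configurations of positive $\mu$-measure that violates the density-point characterization of $\mu$-equicontinuity in Proposition~\ref{prop:mu-equicontinuity}. Write $\mu\isdef\mu_\half\times\nu$, let $(B_n)_{n\in\NN}$ be the co-final chain of centered balls of $\cayley(\GG,S)$, take $F\isdef\{e\}$, and set $\mathcal{S}\isdef\{(x,w)\in A^\GG:\text{the dependence process of }\varphi\text{ survives on }w\}$. By Observation~\ref{obs:dependence-process:measurability} the set $\mathcal{S}$ is measurable, and the product structure of $\mu$ gives $\mu(\mathcal{S})=\nu(\{w:\text{survives on }w\})>0$ by hypothesis. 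I will show that no $(x,w)\in\mathcal{S}\cap\supp\mu$ is a point of $\mu$-density of $C\big((x,w),\{e\},\varphi\big)$ with respect to $(B_n)_{n\in\NN}$; since $\mu(\mathcal{S}\cap\supp\mu)=\mu(\mathcal{S})>0$, Proposition~\ref{prop:mu-equicontinuity} then yields that $\varphi$ is not $\mu$-equicontinuous.

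Fix $(x,w)\in\mathcal{S}$. Survival of the dependence process means the finite sets $M_{\leq m}(w)$ form a strictly increasing chain, so $\abs{M_{\leq m}(w)}\to\infty$ and, for every $n$, there is a least index $m=m(n)$ with $M_{\leq m}(w)\not\subseteq B_n$; thus $M_{\leq m-1}(w)\subseteq B_n$ while $M_m(w)$ contains some site $g^\ast\notin B_n$. The crucial point is that, on the cylinder $[(x,w)_{B_n}]=\{(y,v):y_g=x_g\text{ and }v_g=w_g\text{ for all }g\in B_n\}$, one has $M_j(v)=M_j(w)$ for all $j\leq m(n)$: this follows by induction on $j$ from Observation~\ref{obs:dependence-process:measurability}, since the inductive hypothesis $M_{\leq j-1}(v)=M_{\leq j-1}(w)\subseteq M_{\leq m-1}(w)\subseteq B_n$ forces $v$ and $w$ to agree on $M_{\leq j-1}(v)$. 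Combining this with the additive description~\eqref{eq:CA-after-n-step:additive}, for $(y,v)\in[(x,w)_{B_n}]$ and $m=m(n)$ we obtain
\begin{align*}
    \varphi_v^{m}(y)_e &= \Big(\sum_{g\in M_{m}(w)} y_g\Big)\bmod 2 = \Big(\sum_{g\in M_{m}(w)\cap B_n} x_g \;+\; \sum_{g\in M_{m}(w)\setminus B_n} y_g\Big)\bmod 2 \;.
\end{align*}
The first sum is constant on $[(x,w)_{B_n}]$; conditioned on that cylinder the bits $(y_g)_{g\notin B_n}$ remain i.i.d.\ Bernoulli$(\half)$ (as $\mu$ is a product measure and the cylinder constrains only coordinates in $B_n$), and $M_m(w)\setminus B_n\ni g^\ast$ is nonempty, so by Lemma~\ref{lem:parity-of-iid-Bernoulli-is-Bernoulli} the second sum is Bernoulli$(\half)$. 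Hence $\varphi_v^{m(n)}(y)_e$ is Bernoulli$(\half)$ conditioned on $[(x,w)_{B_n}]$, whatever $(x,w)$ is. Since $C\big((x,w),\{e\},\varphi\big)\subseteq\{(y,v):\varphi_v^{m(n)}(y)_e=\varphi_w^{m(n)}(x)_e\}$, it follows that $\mu\big(C((x,w),\{e\},\varphi)\mid[(x,w)_{B_n}]\big)\leq\half$ for every $n$, so $(x,w)$ is not a point of $\mu$-density of $C((x,w),\{e\},\varphi)$, as required.

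I expect the delicate step to be the choice of the time scale $m(n)$ together with the induction $M_j(v)=M_j(w)$ on the cylinder: one must take $m(n)$ to be precisely the first time the dependence set of the origin under $w$ leaves the observed window $B_n$, so that all earlier dependence sets are trapped in $B_n$ (hence shared by every environment $v$ agreeing with $w$ on $B_n$), while the escaping site $g^\ast$ introduces a fresh, unconstrained cell into the parity sum and randomizes the output. Everything else is bookkeeping with the additivity of $\varphi$ and with parities of i.i.d.\ Bernoulli bits.
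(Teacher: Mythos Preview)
Your proof is correct and follows essentially the same approach as the paper's: both fix $F=\{e\}$, use the co-final chain of balls $(B_n)$, take the first time the dependence set of the origin escapes~$B_n$, invoke Observation~\ref{obs:dependence-process:measurability} to show that this escape time and the corresponding dependence set are shared by all environments agreeing with~$w$ on~$B_n$, and then apply Lemma~\ref{lem:parity-of-iid-Bernoulli-is-Bernoulli} to the unconstrained bits outside~$B_n$ to bound the conditional probability of the stability set by~$\half$. Your treatment is in fact slightly more careful in two places: you explicitly restrict to configurations in $\supp\mu$ (so that the conditional probabilities are well-defined), and you spell out the induction $M_j(v)=M_j(w)$ for $j\leq m(n)$ rather than asserting it in one line.
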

\begin{proof}
    Let $\mu\isdef\mu_\half\times\nu$.  As before, we let $B_n$ denote the centered ball of radius $n$ in $\cayley(\GG,S)$.
    
    Let $Q$ denote the set of all environment configurations $w\in\{\symb{0},\symb{1}\}^{\GG\times S}$ on which the dependence process of $\varphi$ survives.  By assumption, $\nu(Q)>0$ hence $\mu(\{\symb{0},\symb{1}\}^\GG\times Q)>0$.
    We claim that
    \begin{align}
    \label{eq:not-mu-equicontinuous:bounded-away}
        \mu\Big(C\big((x,w),\{e\},\varphi\big)\given[\Big][(x,w)_{B_n}]\Big) &\leq \half
    \end{align}
    for every $w\in Q$, $x\in\{\symb{0},\symb{1}\}^\GG$ and $n\geq 0$, which means that $(x,w)$ is not a point of density of~$C\big((x,w),\{e\},\varphi\big)$ with respect to the co-final chain $(B_n)_{n=1}^\infty$.
    This would thus imply that $\varphi$ is not $\mu$-equicontinuous by Proposition~\ref{prop:mu-equicontinuity}.

    So, let $w\in Q$, $x\in\{\symb{0},\symb{1}\}^\GG$ and $n\geq 0$.
    Since the dependence process of $\varphi$ survives on~$w$, there exists a time $t$ such that $M_t(w)\setminus B_n$ is non-empty.  Take the smallest such~$t$.
    From 
    Observation~\ref{obs:dependence-process:measurability}
    and the choice of $t$ it follows that $M_t(w)$ is uniquely determined by the restriction of $w$ to~$B_n$.  In other words, $M_t(w')=M_t(w)$ for every $w'\in[w_{B_n}]$.
    Hence, according to~\eqref{eq:CA-after-n-step:additive}, for every $(x',w')\in [(x,w)_{B_n}]$, we have
    \begin{align*}
        \varphi_{w'}^t(x')_e &= \Big(\sum_{g\in M_t(w)} x'_g\Big) \bmod{2}
        =
            \Big(\sum_{g\in M_t(w)\cap B_n} x_g + \sum_{g\in M_t(w)\setminus B_n} x'_g\Big) \bmod{2}
    \end{align*}
    Since $M_t(w)\setminus B_n$ is non-empty, by Lemma~\ref{lem:parity-of-iid-Bernoulli-is-Bernoulli}, we have
    \begin{align*}
        \MoveEqLeft
        \mu_\half\bigg(\Big\{x'\in\{\symb{0},\symb{1}\}^\GG: \Big(\sum_{g\in M_t(w)\setminus B_n} x'_g\Big) \bmod{2}=\symb{0}\Big\}\bigg)
        \\
        &=
        \mu_\half\bigg(\Big\{x'\in\{\symb{0},\symb{1}\}^\GG: \Big(\sum_{g\in M_t(w)\setminus B_n} x'_g\Big) \bmod{2}=\symb{1}\Big\}\bigg)
        = \half \;.
    \end{align*}
    Therefore,
    \begin{align*}
        \MoveEqLeft
        \mu\Big(\big\{(x',w')\in A^\GG:\varphi_{w'}^t(x')_e=\symb{1}\big\}\given[\Big][(x,w)_{B_n}]\Big) \\
        &=
        \mu\Big(\big\{(x',w')\in A^\GG:\varphi_{w'}^t(x')_e=\symb{0}\big\}\given[\Big][(x,w)_{B_n}]\Big)
        = \half \;,
    \end{align*}
    from which~\eqref{eq:not-mu-equicontinuous:bounded-away} follows.
\end{proof}

We can now prove the first main result of this paper:
\emph{an infinite, finitely generated group satisfies Gilman's dichotomy if and only if it is virtually $\ZZ$.}

\begin{proof}[Proof of Theorem~\ref{thm:main-result}]
    If $\GG$ is virtually~$\ZZ$, then by Theorem~\ref{thm:virtually_Z_satisfies_dichotomy}, it satisfies Gilman's dichotomy for every $\GG$-ergodic probability measure.
    Now, let $\GG$ be an infinite, finitely generated group that is not virtually~$\ZZ$. By Theorem~\ref{thm:percolation}, every Cayley graph of $\GG$ has a non-trivial percolation threshold. By Proposition~\ref{prop:halfpercolation} there exists a set of generators $S\Subset \GG$ so that $p_{\critical}(\cayley(\GG,S))<1/2$.
    
    Let $\varphi$ be the percolated additive CA on $\GG$ associated to $S$. 
     By Proposition~\ref{prop:random-percolated additive:not-mu-sensitive}, we have that $\varphi$ is not $(\mu_\half\times\widetilde{\mu}_\half)$-sensitive.
    
    As $p_{\critical}(\cayley(\GG,S))<1/2$, Proposition~\ref{prop:dependence-process-vs-cluster-exploration-process} implies that the dependence process $(M_n)_{n \geq 0}$ has a positive probability of survival with respect to $\widetilde{\mu}_\half$, thus from Proposition~\ref{prop:not-mu-equicontinuous-if-dependence-process-survives} we conclude that $\varphi$ is not $(\mu_\half\times\widetilde{\mu}_\half)$-equicontinuous, and hence the result.    
\end{proof}


\section{Gilman's dichotomy for countable groups}
\label{sec:gilman-on-countable-groups}


In this section, we extend the characterization of the groups that satisfy Gilman's dichotomy to cover all countable groups.

Let $\varphi \colon A^{\GG}\to A^{\GG}$ be a CA. Theorem~\ref{thm:curtis} ensures that there exists a set $K\Subset \GG$ and a local function $f\colon A^K\to A$ such that $\varphi(x)_g = f\big((g^{-1}x)_K\big)$ for every $g \in \GG$. If we let $\HH \isdef \generatedby{K}$ be the subgroup of~$\GG$ generated by $K$, then $f$ also induces a CA $\widetilde{\varphi}\colon A^{\HH}\to A^{\HH}$ through $\widetilde{\varphi}(x)_h = f\big((h^{-1}x)_K\big)$ for every $h \in \HH$. Conversely, every CA on $A^{\HH}$ can be extended to a CA on $A^{\GG}$ using the same local map $f$.

\begin{lemma}[Sensitivity and equicontinuity and subgroups]
\label{lema:extension_from_subgroups}
    Let $\HH$ be a subgroup of $\GG$.
    Consider a set $K\Subset \HH$ and a local rule $f\colon A^K \to A$.  Let $\varphi$ and $\widetilde{\varphi}$ be the CA induced by $f$ on $\GG$ and $\HH$ respectively, and let $\mu$ and $\widetilde{\mu}$ be Bernoulli measures with the same coordinate-wise marginal distribution on $A^{\GG}$ and $A^{\HH}$ respectively.
    Then,
    \begin{enumerate}[label={\textup{(\roman*)}}]
        \item $\varphi$ is $\mu$-sensitive if and only if $\widetilde{\varphi}$ is $\widetilde{\mu}$-sensitive.
        \item $\varphi$ is $\mu$-equicontinuous if and only if $\widetilde{\varphi}$ is $\widetilde{\mu}$-equicontinuous.
    \end{enumerate}
\end{lemma}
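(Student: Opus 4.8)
The plan is to use the fact that, since the neighbourhood $K$ is contained in the subgroup $\HH$, the automaton $\varphi$ decouples along the left cosets of $\HH$, each coset carrying a copy of the dynamics of $\widetilde\varphi$. First I would fix a transversal $T\subseteq\GG$ of the left cosets of $\HH$ with $e\in T$, so $\GG=\bigsqcup_{t\in T}t\HH$, and for $x\in A^\GG$ and $t\in T$ set $x^{(t)}\in A^\HH$, $x^{(t)}_h\isdef x_{th}$; the map $\pi\colon x\mapsto(x^{(t)})_{t\in T}$ is a homeomorphism $A^\GG\to\prod_{t\in T}A^\HH$. Since $\varphi(x)_g=f\big((x_{gk})_{k\in K}\big)$ and $hk\in\HH$ whenever $h\in\HH$ and $k\in K$, a routine induction on $n$ gives $\varphi^n(x)_{th}=\widetilde\varphi^n\big(x^{(t)}\big)_h$ for all $n\geq0$, $t\in T$ and $h\in\HH$; that is, $\pi$ conjugates $\varphi$ to the coordinatewise product $\prod_{t\in T}\widetilde\varphi$ on $\prod_{t\in T}A^\HH$. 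Moreover, because Bernoulli measures are product measures and $\mu$, $\widetilde\mu$ share the same coordinate-wise marginal, $\pi$ pushes $\mu$ forward to $\bigotimes_{t\in T}\widetilde\mu$.

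Next I would transport stability sets through $\pi$. Given $F\Subset\GG$, put $F_t\isdef\{h\in\HH:th\in F\}$; then $F_t=\varnothing$ for all but finitely many $t$, say for $t\in T_F\Subset T$, and $F=\bigsqcup_{t\in T_F}tF_t$. The conjugacy above gives, for $x,z\in A^\GG$, that $z\in C(x,F,\varphi)$ if and only if $z^{(t)}\in C\big(x^{(t)},F_t,\widetilde\varphi\big)$ for every $t\in T_F$. Since coordinates attached to distinct cosets are $\mu$-independent and the $t$-th marginal of $\mu$ is $\widetilde\mu$, this yields the finite product identity
\begin{align*}
    \mu\big(C(x,F,\varphi)\big) &= \prod_{t\in T_F}\widetilde\mu\big(C(x^{(t)},F_t,\widetilde\varphi)\big)\;,
\end{align*}
so $\mu\big(C(x,F,\varphi)\big)>0$ precisely when $\widetilde\mu\big(C(x^{(t)},F_t,\widetilde\varphi)\big)>0$ for every $t\in T_F$, and $\mu\big(C(x,F,\varphi)\big)=0$ precisely when $\widetilde\mu\big(C(x^{(t)},F_t,\widetilde\varphi)\big)=0$ for some $t\in T_F$.

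From here both equivalences are bookkeeping. For sensitivity: if $\widetilde F\Subset\HH$ witnesses the $\widetilde\mu$-sensitivity of $\widetilde\varphi$, then taking $F\isdef\widetilde F$ (so that $T_F=\{e\}$ and $F_e=\widetilde F$) gives $\mu\big(C(x,F,\varphi)\big)=\widetilde\mu\big(C(x^{(e)},\widetilde F,\widetilde\varphi)\big)=0$ for every $x$, hence $\varphi$ is $\mu$-sensitive by Remark~\ref{rem:musens:sure}. Conversely, if $F\Subset\GG$ witnesses the $\mu$-sensitivity of $\varphi$, I would argue that $\widetilde\mu\big(C(y,F_{t_0},\widetilde\varphi)\big)=0$ for all $y\in A^\HH$ for at least one $t_0\in T_F$: otherwise one could pick $y^{(t)}\in A^\HH$ with $\widetilde\mu\big(C(y^{(t)},F_t,\widetilde\varphi)\big)>0$ for each $t\in T_F$, let $x\isdef\pi^{-1}\big((z^{(t)})_{t\in T}\big)$ with $z^{(t)}=y^{(t)}$ for $t\in T_F$ and $z^{(t)}$ arbitrary otherwise, and contradict $\mu\big(C(x,F,\varphi)\big)=0$. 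Since $t_0\in T_F$ forces $F_{t_0}\neq\varnothing$, this $F_{t_0}$ witnesses the $\widetilde\mu$-sensitivity of $\widetilde\varphi$ (again via Remark~\ref{rem:musens:sure}). For equicontinuity, using the quantifier swap of Remark~\ref{rem:mueq:quantifiers}, set $W_{\widetilde F}\isdef\{y\in A^\HH:\widetilde\mu(C(y,\widetilde F,\widetilde\varphi))>0\}$; through $\pi$, the set $\{x:\mu(C(x,F,\varphi))>0\}$ becomes $\bigcap_{t\in T_F}\{x:x^{(t)}\in W_{F_t}\}$, whose $\mu$-measure equals $\prod_{t\in T_F}\widetilde\mu(W_{F_t})$ by independence, and this product is $1$ iff $\widetilde\mu(W_{F_t})=1$ for every $t\in T_F$. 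Taking $F=\widetilde F\subseteq\HH$ shows that $\mu$-equicontinuity of $\varphi$ forces $\widetilde\mu(W_{\widetilde F})=1$ for every $\widetilde F\Subset\HH$, i.e.\ $\widetilde\mu$-equicontinuity of $\widetilde\varphi$; conversely, since every $F_t$ is a finite subset of $\HH$, $\widetilde\mu$-equicontinuity of $\widetilde\varphi$ makes all the factors $1$, so $\varphi$ is $\mu$-equicontinuous. The only step that is not purely formal is the uniform-in-$x$ choice of $t_0$ in the sensitivity direction, and that is precisely what the coset-by-coset assembly of a configuration handles.
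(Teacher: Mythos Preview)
Your proof is correct and follows essentially the same approach as the paper: both decompose $F$ along left cosets of $\HH$, use that $K\Subset\HH$ forces the dynamics to decouple coset by coset, and exploit the Bernoulli (product) structure to obtain the identity $\mu\big(C(x,F,\varphi)\big)=\prod_{t\in T_F}\widetilde\mu\big(C(x^{(t)},F_t,\widetilde\varphi)\big)$, from which both parts follow by the same bookkeeping. Your presentation is a bit more streamlined in that you set up the conjugacy $\pi$ and the product measure once, whereas the paper re-derives the independence and uses the $\GG$-invariance of $\mu$ separately in each direction; but the content is the same.
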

\begin{proof}
    Let $W$ be a set of representatives from the left cosets of $\HH$ in $\GG$, that is, a set such that every $g\in\GG$ has a unique representation as $g=wh$ for some $w\in W$ and $h\in\HH$.
    \begin{enumerate}[label={\textup{(\roman*)}}]
        \item First, suppose that $\widetilde{\varphi}$ is not $\widetilde{\mu}$-sensitive.
        Let $F\Subset\GG$.
        Partitioning $F$ according to the $\HH$-cosets it intersects, we can write $F=\bigcup_{i=1}^n w_iF_i$ for some $n\in\NN$, distinct $w_1,w_2,\ldots,w_n\in W$ and $F_1,F_2,\ldots,F_n\Subset\HH$.
        Since $\widetilde{\varphi}$ is not $\widetilde{\mu}$-sensitive, for each $i$ there exists a configuration $\widetilde{x}^{(i)}\in A^\HH$ such that $\widetilde{\mu}\big(C(\widetilde{x}^{(i)},F_i,\widetilde{\varphi})\big)>0$.
        Choose a configuration $x\in A^\GG$ such that $(w_i^{-1} x)_\HH=\widetilde{x}^{(i)}$ for each~$i$.
        Note that
        \begin{align*}
            C(x,F,\varphi) &=
                \bigcap_{i=1}^n C(x,w_i F_i,\varphi) \;.
        \end{align*}
        Since $K\Subset\HH$, the values of $\varphi^n(x)$ on $w_i F_i$ depend only on the values of $x$ on $w_i\HH$.
        Hence, the sets $C(x,w_i F_i,\varphi)$ are independent with respect to~$\mu$, that is,
        \begin{align*}
            \mu\big(C(x,F,\varphi)\big) &=
            \prod_{i=1}^n
                \mu\big(C(x,w_i F_i,\varphi)\big) \;.
        \end{align*}
        Moreover, because of the $\GG$-invariance of~$\mu$, we have
        \begin{align*}
            \mu\big(C(x,w_i F_i,\varphi)\big) &=
            \mu\big(w_i^{-1} C(x,w_i F_i, \varphi)\big) =
            \mu\big(C(w_i^{-1}x,F_i,\varphi)\big) \;.
        \end{align*}
        Lastly, since $F_i\Subset\HH$, we have
        \begin{align*}
            \mu\big(C(w_i^{-1}x,F_i,\varphi)\big) &=
             \widetilde{\mu}\big(C((w_i^{-1}x)_\HH,F_i,\widetilde{\varphi})\big) =
             \widetilde{\mu}\big(C(\widetilde{x}^{(i)},F_i,\widetilde{\varphi})\big)
             > 0 \;.
        \end{align*}
        Putting all together, we obtain that
        $\mu\big(C(x,F,\varphi)\big)>0$.
        Therefore, $\varphi$ is not $\mu$-sensitive
        (see Remark~\ref{rem:musens:sure}).

        Conversely, suppose that $\widetilde{\varphi}$ is $\widetilde{\mu}$-sensitive.  Then, there exists a set $F\Subset\HH$ such that $\widetilde{\mu}\big(C(\widetilde{x},F,\widetilde{\varphi}\big)=0$ for every $\widetilde{x}\in A^\HH$.
        Thus, for every $x\in A^\GG$, we have
        \begin{align*}
            \mu\big(C(x,F,\varphi)\big) &=
            \widetilde{\mu}\big(C(x_\HH,F,\widetilde{\varphi})\big)
            = 0 \;,
        \end{align*}
        which means $\varphi$ is $\mu$-sensitive.


        \item Suppose that $\varphi$ is $\mu$-equicontinuous.  Let $F\Subset\HH$.
        Define
        $X \isdef \big\{x\in A^\GG: \mu\big(C(x,F,\varphi)\big)>0\big\}$.
        Since $\varphi$ is $\mu$-equicontinuous, we have $\mu(X)=1$.
        Since $K,F\subseteq\HH$, we have
        \begin{align*}
            \mu\big(C(x,F,\varphi)\big) &=
                \widetilde{\mu}\big(C(x_\HH,F,\widetilde{\varphi})\big) 
        \end{align*}
        and
        $X=\widetilde{X}\times A^{\GG\setminus\HH}$ for some measurable $\widetilde{X}\subseteq A^\HH$.
        Clearly, $\widetilde{\mu}(\widetilde{X})=1$.
        Moreover, $\widetilde{\mu}\big(C(\widetilde{x},F,\widetilde{\varphi})\big)>0$ for all $\widetilde{x}\in\widetilde{X}$.
        Therefore, $\widetilde{\varphi}$ is $\widetilde{\mu}$-equicontinuous (see Remark~\ref{rem:mueq:quantifiers}).
        
        Conversely, suppose that $\widetilde{\varphi}$ is $\widetilde{\mu}$-equicontinuous.
        Let $F\Subset\GG$.
        As in the previous part, we can write $F=\bigcup_{i=1}^n w_iF_i$ for some $n\in\NN$, distinct $w_1,w_2,\ldots,w_n\in W$ and $F_1,F_2,\ldots,F_n\Subset\HH$,
        so that
        \begin{align*}
            \mu\big(C(x,F,\varphi)\big) &=
            \prod_{i=1}^n
                \widetilde{\mu}\big(C((w_i^{-1}x)_\HH, F_i,\widetilde{\varphi})\big)
        \end{align*}
        for every $x\in A^\GG$.
        Since $\widetilde{\varphi}$ is $\widetilde{\mu}$-equicontinuous, for each $i$, there exists a measurable set $X_i\subseteq A^\HH$ with $\widetilde{\mu}(X_i)=1$
        such that $\widetilde{\mu}\big(C(\widetilde{x},F_i,\widetilde{\varphi})\big)>0$ for every $\widetilde{x}\in X_i$.
        Let $X\isdef\big\{x\in A^\GG: \text{$(w_i^{-1}x)_\HH\in X_i$ for $i=1,2,\ldots,n$}\big\}$.
        Clearly, $\mu(X)=1$.
        Furthermore, $\mu\big(C(x,F,\varphi)\big)>0$ for each~$x\in X$.
        We conclude that $\varphi$ is $\mu$-equicontinuous.
        \qedhere
    \end{enumerate}
\end{proof}


We are now ready to prove the general characterization: \emph{a countable group satisfies Gilman's dichotomy if and only if it is locally virtually cyclic.}

\begin{proof}[Proof of Theorem~\ref{thm:main2}]
    Let $\GG$ be a countable group.
    
    First, suppose that $\GG$ has a finitely generated subgroup $\HH$ that is not virtually cyclic.
    Then, Theorem~\ref{thm:main-result} provides a CA on $A^{\HH}$ that is neither sensitive nor equicontinuous with respect to the uniform Bernoulli measure on $A^{\HH}$. By Lemma~\ref{lema:extension_from_subgroups}, the extension of that CA to $\GG$ is neither sensitive nor equicontinuous with respect to the uniform Bernoulli measure on $A^{\GG}$.
    
    Conversely, suppose that every finitely generated subgroup of $\GG$ is virtually cyclic.
    Then, every CA on $\GG$ induces a CA on a finitely generated subgroup $\HH$, which is either a virtually $\ZZ$ or a finite group.  By Theorem~\ref{thm:virtually_Z_satisfies_dichotomy}, we know that the dichotomy holds if $\HH$ is virtually $\ZZ$.  If $\HH$ is finite, it is clear that every CA is equicontinuous and not sensitive with respect to every probability measure on $A^{\HH}$, thus the dichotomy holds trivially.  We conclude again using Lemma~\ref{lema:extension_from_subgroups} that the dichotomy holds for~$\GG$ as well.
\end{proof}

Interesting examples of non-finitely generated locally virtually cyclic groups (and hence where Gilman's dichotomy holds) include the additive group of rational numbers $\QQ$, the $p$-adic rationals $\ZZ \left[\sfrac{1}{p}\right]$, the Pr\"{u}fer $p$-groups $\ZZ\left[\sfrac{1}{p}\right]/\ZZ$, and the group $S_{\infty}$ of finitely supported permutations of a countably infinite set.

\section{Further remarks and questions}
\label{sec:remarks}

\subsection{Examples of $\mu$-sensitive CA on groups}


On every countable group, one can find CA that are equicontinuous with respect to the uniform Bernoulli measure (e.g., the identity).  However, not every countable group admits a CA that is sensitive with respect to the uniform Bernoulli measure.  For instance, a CA on a finite group cannot be sensitive with respect to any measure, and by Lemma~\ref{lema:extension_from_subgroups}, the same is true for locally finite groups, at least with respect to Bernoulli measures.
Below we provide examples of $\mu$-sensitive CA for a class of groups.

\begin{proposition}
    Let $\GG$ be a group that has a non-torsion element. Then, there exists a CA on~$\{\symb{0},\symb{1}\}^{\GG}$ that is sensitive with respect to the uniform Bernoulli measure $\mu_{\half}$.
\end{proposition}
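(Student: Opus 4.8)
The plan is to exhibit a suitable \emph{partial shift} in the direction of the non-torsion element, mirroring the classical fact that the shift on $\{\symb{0},\symb{1}\}^{\ZZ}$ is sensitive with respect to any nondegenerate Bernoulli measure. Fix a non-torsion element $h\in\GG$ and define $\varphi\colon\{\symb{0},\symb{1}\}^{\GG}\to\{\symb{0},\symb{1}\}^{\GG}$ by $\varphi(x)_g\isdef x_{gh}$ for every $g\in\GG$. First I would check that $\varphi$ is a CA: since $(g^{-1}x)_h=x_{gh}$, the map $\varphi$ is given by the neighbourhood $K=\{h\}$ and the local rule $f\colon A^{\{h\}}\to A$ with $f(p)=p(h)$, so Theorem~\ref{thm:curtis} applies. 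A straightforward induction then gives $\varphi^{n}(x)_g=x_{gh^{n}}$ for all $n\geq 0$ and $g\in\GG$; in particular $\varphi^{n}(x)_e=x_{h^{n}}$.

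Next I would take $F\isdef\{e\}$ as the witness for $\mu_{\half}$-sensitivity. For an arbitrary $x\in\{\symb{0},\symb{1}\}^{\GG}$ one has
\[
    C(x,\{e\},\varphi)=\big\{y: \varphi^{n}(y)_e=\varphi^{n}(x)_e \text{ for all } n\geq 0\big\}=\big\{y: y_{h^{n}}=x_{h^{n}} \text{ for all } n\geq 0\big\},
\]
which is precisely the cylinder determined by the restriction of $x$ to $H^{+}\isdef\{h^{n}: n\geq 0\}$. Because $h$ is non-torsion, the powers $h^{0},h^{1},h^{2},\dots$ are pairwise distinct, so $H^{+}$ is infinite.

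Finally, since $\mu_{\half}$ is the Bernoulli measure with uniform marginals, the coordinates indexed by the distinct sites in $H^{+}$ are independent and each agreement event has probability $\half$, so
\[
    \mu_{\half}\big(C(x,\{e\},\varphi)\big)=\prod_{n\geq 0}\mu_{\half}\big(\{y: y_{h^{n}}=x_{h^{n}}\}\big)=\prod_{n\geq 0}\half=0.
\]
As this holds for every $x\in\{\symb{0},\symb{1}\}^{\GG}$, Definition~\ref{def:musens} (in the form given by Remark~\ref{rem:musens:sure}) shows that $\varphi$ is $\mu_{\half}$-sensitive. I do not anticipate any genuine obstacle here; the only points requiring (routine) verification are that $H^{+}$ is infinite, which is immediate from $h$ being non-torsion, and that the relevant coordinates are $\mu_{\half}$-independent, which is immediate from $\mu_{\half}$ being a product measure.
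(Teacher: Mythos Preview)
Your proposal is correct and follows essentially the same approach as the paper: both use the partial shift $\varphi(x)_g=x_{gh}$ along a non-torsion element $h$, take $F=\{e\}$, identify $C(x,\{e\},\varphi)$ with the cylinder on $\{h^n:n\geq 0\}$, and conclude that this set has $\mu_{\half}$-measure zero because $h$ has infinite order. Your write-up is slightly more explicit (checking the CA axioms via Theorem~\ref{thm:curtis} and spelling out the product computation), but the argument is the same.
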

\begin{proof}
    Let $h\in \GG$ be a non-torsion element of~$\GG$ and let $\tau\colon \{\symb{0},\symb{1}\}^{\GG}\to \{\symb{0},\symb{1}\}^{\GG}$ be given by
    $\tau(x)_g \isdef x(gh)$ for every $g\in\GG$.
    Observe that, for $x,y\in \{\symb{0},\symb{1}\}^{\GG}$, we have $y \in C(x,\{e\},\tau)$ if and only if $x_T = y_T$, where $T \isdef \{h^n : n \geq 0\}$.  As $h$ is not a torsion element, the set $T$ is infinite.
    It follows that $\mu_\half\big(C(x,\{e\},\tau)\big)=0$ for every $x\in \{\symb{0},\symb{1}\}^{\GG}$, which means $\tau$ is $\mu_{\half}$-sensitive.
\end{proof}


\begin{question}
    Do infinite, finitely generated torsion groups admit CA that are
    sensitive with respect to the uniform Bernoulli measure?
\end{question}

\subsection{Site-percolated additive CA and odd percolation}

A perhaps more natural candidate for a CA that does not satisfy Gilman's dichotomy is the following:

\begin{example}[Site-percolated additive CA]
\label{exp:random-site-XOR-CA}
    Let $S$ be a finite generating set for a group~$\GG$.
    Let $A\isdef\{\symb{0},\symb{1},\symb{\star}\}$ and consider the CA given by the map $\varphi'\colon A^\GG\to A^\GG$, where
    \begin{align*}
        \varphi'(x)_g &\isdef
            \begin{cases}
                \sum_{s\in S} \pi(x_{gs}) \bmod{2}      & \text{if $x_g\in\{\symb{0},\symb{1}\}$,} \\
                \symb{\star}                        & \text{if $x_g=\symb{\star}$,}
            \end{cases}
            \qquad\text{for every $g \in \GG$,}
    \end{align*}
    where $\pi(\symb{\star})=\pi(\symb{0})\isdef\symb{0}$ and $\pi(\symb{1})\isdef\symb{1}$.  We view the sites with $\star$ as being \Emph{closed} and the other sites  as \Emph{open}.  Thus, the closed sites remain closed, and the open sites are updated to the sum modulo~$2$ of their open neighbors. 
\end{example}

An argument similar to that of Proposition~\ref{prop:random-percolated additive:not-mu-sensitive} shows that the site-percolated additive CA is not sensitive with respect to any fully supported measure.
For the special case of the group~$\ZZ^2$,
a result of Bramson and Neuhauser on random perturbations of CA~\cite{BN1994} can be used to show that, for a specific choice of the set of generators, the site-percolated additive CA is not equicontinuous with respect to some Bernoulli measures, and thus violates Gilman's dichotomy.
We conjecture that the same is true for all groups with non-trivial percolation threshold.
This would arguably be simpler than the CA in Example~\ref{exp:random-percolated additive-CA}.

\begin{question}
    Is there a non-virtually cyclic, finitely generated group $\GG$ on which, for every choice of the generating set $S$, the site-percolated additive CA is equicontinuous with respect to every Bernoulli measure?
\end{question}



To contrast it with the site-percolated additive CA, we may refer to the CA of Example~\ref{exp:random-percolated additive-CA} as the \Emph{bond-}percolated additive CA.
The site- and bond-percolated additive CA are closely related to special percolation models which we call \Emph{odd (bond or site) percolation}.


Let $\Gamma$ be a locally finite graph and $w\in\{\symb{0},\symb{1}\}^{E(\Gamma)}$ a configuration of \Emph{open} and \Emph{closed} edges.
We say that $w$ \Emph{odd-percolates} from a vertex $a\in V(\Gamma)$ to a vertex $b\in V(\Gamma)$ if there exists an $\ell\in\NN$ such that the number of open paths of length $\ell$ from $a$ to $b$ is odd.
If the latter holds for some $a\in V(\Gamma)$ and infinitely many choices of~$b\in V(\Gamma)$, we say that $w$ \Emph{odd-percolates}.
Lastly, we say that a probability measure~$\mu$ on~$\{\symb{0},\symb{1}\}^{E(\Gamma)}$ \Emph{odd-percolates} if $\mu$ assigns a positive probability to the set of configurations that odd-percolate.
Odd percolation for configurations of open and odd vertices and for measures on such configurations are defined analogously.

Now, consider a group~$\GG$ with generating set~$S\Subset\GG$.
From the discussion of Subsection~\ref{ssection:connection}, it is evident that a configuration $w$ odd-percolates in $\cayley(\GG,S)$ if and only if
the dependence process of the bond-percolated additive CA survives on~$w$.
In particular, Proposition~\ref{prop:not-mu-equicontinuous-if-dependence-process-survives} can be rephrased as follows: if a probability measure~$\nu$ on the environment configurations odd-percolates, then the bond-percolated additive CA is not equicontinuous with respect to~$\mu_\half\times\nu$.
A similar correspondence holds between site odd percolation and the site-percolated additive CA.
This leads to the following question.


\begin{question}
    Let $\GG$ be a finitely generated group that is not virtually $\ZZ$. Is there a generating set $S\Subset \GG$ and a non-trivial Bernoulli measure on $\{\symb{0},\symb{1}\}^\GG$ that odd-percolates in $\cayley(\GG,S)$?
\end{question}
\noindent According to Proposition~\ref{prop:dependence-process-vs-cluster-exploration-process}, the answer to the corresponding question for bond odd percolation is positive.

\subsection{Topological dichotomy}

As mentioned in the introduction, K\r{u}rka proved a topological analogue of Gilman's dichotomy~\cite{Kurka1997}.

\begin{theorem}[K\r{u}rka's dichotomy]
    Every CA $\varphi\colon A^{\ZZ}\to A^{\ZZ}$ is either sensitive or almost equicontinuous. 
\end{theorem}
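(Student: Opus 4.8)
The plan is to carry out Kůrka's original argument, organized around the notion of a \emph{blocking word}. By Theorem~\ref{thm:curtis} we may assume $\varphi$ has a local rule $f\colon A^{[-r,r]}\to A$ of radius $r\ge 1$, the case $r=0$ being trivial since then every configuration is an equicontinuity point. Call a word $u\in A^{[0,\ell)}$ with $\ell\ge r$ \emph{blocking with offset $p\in\{0,1,\dots,\ell-r\}$} if for all $x,y\in A^{\ZZ}$ with $x_{[0,\ell)}=y_{[0,\ell)}=u$ one has $\varphi^t(x)_{[p,p+r)}=\varphi^t(y)_{[p,p+r)}$ for every $t\ge 0$; that is, the width-$r$ window sitting at offset $p$ inside any occurrence of $u$ is ``frozen'', its entire orbit depending only on $u$. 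The theorem will follow once we establish: (a) if $\varphi$ has no blocking word then $\varphi$ is sensitive; and (b) if $\varphi$ has a blocking word then $\varphi$ is almost equicontinuous. These two alternatives are mutually exclusive: an almost equicontinuous CA has an equicontinuity point $x$ (a residual subset of $A^{\ZZ}$ is nonempty), at which $C(x,F,\varphi)$ contains a neighbourhood of $x$ for every $F\Subset\ZZ$, so no $F$ can witness sensitivity.

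For (a), fix $F\isdef[0,r)$ and suppose $\varphi$ is \emph{not} sensitive. By the characterization of sensitivity recalled in Section~\ref{sec:sensitivityandequicontinuity}, there is an $x$ with $C(x,F,\varphi)$ of non-empty interior, so $[x_W]\subseteq C(x,F,\varphi)$ for some finite window $W=[c,d)\supseteq F$. Then $u\isdef x_{[c,d)}$ is blocking with offset $-c$: any two configurations containing $u$ at position $c$ lie in $[x_W]$, hence agree with the orbit of $x$ on $F=[0,r)$ at all times, and $F$ is exactly the width-$r$ window at offset $-c$ inside $u$. Contrapositively, absence of a blocking word forces $C(x,F,\varphi)$ to have empty interior for every $x$, i.e.\ $\varphi$ is sensitive with witness $F$.

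For (b), let $u$ be blocking with offset $p$ and length $\ell$. The main point is a \emph{barrier} estimate: if two configurations have a common frozen width-$r$ window $[c,c+r)$ at all times and agree on $[c+r,\infty)$, then, by induction on $t$ using that the $f$-neighbourhood of any coordinate $\ge c+r$ lies in $[c,\infty)$, they agree on $[c+r,\infty)$ at all times; and symmetrically on the left. Consequently, if a configuration $z$ contains $u$ at positions $a<b$ with $a+\ell\le b$, the frozen windows $[a+p,a+p+r)$ and $[b+p,b+p+r)$ block information from both sides, and one deduces that $\varphi^t(z)_m$ for $m$ in the finite strip between them depends only on $z_{[a,b+\ell)}$. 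Hence for any finite $F$ trapped between two such occurrences of $u$ one gets $[z_{[a,b+\ell)}]\subseteq C(z,F,\varphi)$, so $C(z,F,\varphi)$ has non-empty interior. The set of $z\in A^{\ZZ}$ in which $u$ occurs at arbitrarily large positive and arbitrarily large negative positions is a dense $G_\delta$ (for each $N$, ``$u$ occurs somewhere in $[N,\infty)$'' and ``$u$ occurs somewhere in $(-\infty,-N]$'' are open dense conditions), and for every such $z$ and every $F\Subset\ZZ$ we may pick occurrences of $u$ on either side of $F$. Thus $C(z,F,\varphi)$ has non-empty interior for all $F$, and by Proposition~\ref{prop:charac_almost}, $\varphi$ is almost equicontinuous.

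The main obstacle is the barrier argument in step (b): making precise, via a careful induction on the time step, that a ``frozen'' width-$r$ window genuinely prevents information from crossing it, so that the orbit of $z$ on $F$ becomes a function of only finitely many coordinates of $z$. Everything else is bookkeeping with cylinders and the Baire category theorem; note in particular that the potentially delicate half of Kůrka's equivalence --- that the absence of blocking words implies sensitivity --- becomes painless once it is phrased through the criterion ``$C(x,F,\varphi)$ has empty interior for every $x$''.
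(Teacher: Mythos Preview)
The paper does not give its own proof of this statement; it is quoted as a known result of K\r{u}rka and only cited. Your proposal is the classical blocking-word argument and is correct.

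One small wrinkle in part~(a): from ``$C(x,F,\varphi)$ has non-empty interior'' you jump to ``$[x_W]\subseteq C(x,F,\varphi)$ for some $W$'', implicitly assuming $x$ lies in that interior. This is not automatic, but is easily fixed via Observation~\ref{obs:stability-set}: pick any $y$ in the open cylinder contained in $C(x,F,\varphi)$; then $C(y,F,\varphi)=C(x,F,\varphi)$, so $[y_W]\subseteq C(y,F,\varphi)$ with $y\in[y_W]$, and you can proceed with $y$ in place of~$x$. After that, enlarging $W$ to an interval containing $F$ is harmless. Everything else (the one-sided barrier induction, the dense $G_\delta$ of configurations with bi-infinitely many occurrences of $u$, and the appeal to Proposition~\ref{prop:charac_almost}) goes through as you describe.
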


An adaptation of K\r{u}rka's proof along with Lemma~\ref{lema:tecnicotrivial} can be used to extend this result to all virtually $\ZZ$ groups.  A complete proof was given by Audouard~\cite{audouard}.
On the other hand, Sablik and Theyssier constructed an example on~$\ZZ^2$ that is not sensitive and has no equicontinuity points~\cite{ST2010}.
This construction relies on the geometry of~$\ZZ^2$ in a non-trivial fashion, making it difficult to generalize to other groups beyond~$\ZZ^d$.
Nonetheless, it has been conjectured that every countable group that is not virtually cyclic admits a CA that violates K\r{u}rka's dichotomy~\cite[Conjecture 5.2.25]{Bitar-thesis_2024}.

\begin{question}\label{question:kurka}
Let $\GG$ be a finitely generated group which is not virtually cyclic. Is there a CA on $\GG$ that is neither sensitive nor almost equicontinuous? 
\end{question}


Despite the analogy with our Theorem~\ref{thm:main-result}, the percolated additive CA cannot be used to answer this question as they are always almost equicontinuous.

\begin{proposition}\label{prop:percolated_is_almost_equicontinuous}
    Let $\GG$ be a group generated by $S\Subset \GG$. The associated percolated additive CA is almost equicontinuous. 
\end{proposition}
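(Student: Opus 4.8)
The plan is to exhibit an explicit residual set of equicontinuity points; by definition this shows $\varphi$ is almost equicontinuous. Write each configuration as a pair $(x,w)$ with $x\in\{\symb{0},\symb{1}\}^\GG$ and $w\in\{\symb{0},\symb{1}\}^{\GG\times S}$, and recall that $\varphi^n(x,w)=(\varphi_w^n(x),w)$. For $w\in\{\symb{0},\symb{1}\}^{\GG\times S}$ and $g\in\GG$, let $R(g,w)$ denote the set of all $h\in\GG$ reachable from $g$ by a directed open path in $w$ (so $g\in R(g,w)$, and $hs\in R(g,w)$ whenever $h\in R(g,w)$ and $w_h(s)=\symb{1}$). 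The first key observation, proved by induction on $n$, is that $\varphi^n(x,w)_g$ depends only on the restriction of $(x,w)$ to $R(g,w)$, and --- since a breadth-first computation of $R(g,w)$ only ever queries $w$ at sites already known to lie in $R(g,w)$ --- that $R(g,w)$ itself depends only on $w|_{R(g,w)}$; more precisely, if $w'|_{R(g,w)}=w|_{R(g,w)}$ then $R(g,w')=R(g,w)$.

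From this I would conclude that if $R(g,w)$ is finite for every $g\in\GG$, then $(x,w)$ is an equicontinuity point, regardless of $x$. Indeed, given $F\Subset\GG$, set $E\isdef\bigcup_{g\in F}R(g,w)$, a finite set; any $(x',w')$ agreeing with $(x,w)$ on $E$ satisfies $R(g,w')=R(g,w)$ for each $g\in F$, whence $\varphi^t(x',w')_g=\varphi^t(x,w)_g$ for all $g\in F$ and $t\geq 0$. It therefore suffices to show that $W_0\isdef\{w:\text{$R(g,w)$ is finite for every $g\in\GG$}\}$ is residual in $\{\symb{0},\symb{1}\}^{\GG\times S}$, since then $\{\symb{0},\symb{1}\}^\GG\times W_0$ is residual in $A^\GG$ and consists of equicontinuity points.

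To prove $W_0$ is residual, write $W_0=\bigcap_{g\in\GG}U_g$ with $U_g\isdef\{w:\text{$R(g,w)$ finite}\}=\bigcup_{n\geq 0}\{w:R(g,w)\subseteq B_n(g)\}$. Using the self-determination of $R(g,w)$, one checks that $\{w:R(g,w)\subseteq B_n(g)\}$ depends only on the finitely many coordinates $w|_{B_n(g)\times S}$, hence is clopen, so each $U_g$ is open. For density: given a cylinder prescribing $w$ on a finite set $D\times S$, choose $m$ with $D\subseteq B_m(g)$ and extend $w$ by closing every bond out of each vertex at distance exactly $m+1$ from $g$ (and closing all remaining unspecified bonds). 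Because consecutive vertices of a path differ by a generator and hence by at most $1$ in distance to $g$, any directed open path from $g$ gets trapped inside $B_{m+1}(g)$; thus $R(g,w)\subseteq B_{m+1}(g)$ is finite and this $w$ lies in $U_g$. Hence each $U_g$ is dense open and $W_0$ is residual, completing the proof. The only mildly delicate points are the two self-determination claims (for $\varphi^t(\cdot)_g$ and for $R(g,w)$), which drive both the equicontinuity argument and the clopenness of the sets $\{w:R(g,w)\subseteq B_n(g)\}$; they are elementary but should be stated with care, and there is no real obstacle beyond this bookkeeping.
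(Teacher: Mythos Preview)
Your proof is correct. Both your approach and the paper's identify essentially the same equicontinuity points---configurations $(x,w)$ for which the forward-reachability sets $R(g,w)$ are finite---but they take different routes to residuality. The paper simply shows that the equicontinuity points are \emph{dense} (by zeroing out $w$ outside a given finite window $F$, which is essentially your density argument specialized to a single $U_g$) and then invokes the general topological-dynamics fact that the set of equicontinuity points of a system, if dense, is automatically residual. You instead bypass that lemma by exhibiting the residual set $\{\symb{0},\symb{1}\}^\GG\times W_0$ directly as a countable intersection of open dense sets, doing the Baire-category bookkeeping by hand via the self-determination property of $R(g,w)$. The paper's argument is shorter; yours is self-contained and makes the residual set explicit. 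A minor remark: your density step is slightly over-elaborate---once you close all unspecified bonds, only finitely many bonds remain open and $R(g,w)$ is trivially finite, so the distance argument through $B_{m+1}(g)$ is unnecessary.
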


\begin{proof}
    Given $z=(x,w)\in A^{\GG}$ and $F\Subset\GG$, consider the configuration $z'=(x,w')\in [z_F]$ in which
    \begin{align*}
        w'_g(s) &\isdef
            \begin{cases}
                w_g(s)  &\text{if $g\in F$,} \\
                \symb{0}       &\text{otherwise,}
            \end{cases}
    \end{align*}
    for every $g\in\GG$ and $s\in S$.
    Note that $z'$ is an equicontinuity point of the CA because for every $E\Subset\GG$ with $FS\subseteq E$, we have $[z'_E]\subseteq C(z',E,\varphi)$.
    It follows that the set of equicontinuity points of the CA is dense.
    However, if the set of equicontinuity points of a topological dynamical system is dense, it must also be residual~\cite[Proposition 2.30]{Kurka2003a}.
    We conclude that the CA is almost equicontinuous.
\end{proof}

As a consequence of Gilman's and K\r{u}rka's dichotomy theorems, every almost equicontinuous CA on~$\ZZ$ is equicontinuous with respect to every fully supported probability measure (see also~\cite[Propositions 3.4 and 3.5]{Gilman1987}). However,
this fails for groups that are not virtually cyclic as the percolated additive CA is almost equicontinuous but not $\mu_\half\times\widetilde{\mu}_\half$-equicontinuous.

\begin{question}
    Is there an almost equicontinuous CA that is not equicontinuous with respect to any non-trivial Bernoulli measure?
\end{question}

\subsection{Equicontinuity points with respect to a measure}

There is a natural notion of an equicontinuity point with respect to a measure.  Let $\varphi\colon A^{\GG}\to A^{\GG}$ be a CA and $\mu$ a probability measure on $A^{\GG}$. We say that $x\in A^{\GG}$ is a \Emph{$\mu$-equicontinuity point} of $\varphi$ if $\mu\big(C(x,F,\varphi)\big)>0$ for every $F\Subset\GG$. 
With this definition, $\varphi$ is $\mu$-equicontinuous if and only if $\mu$-almost all its configurations are $\mu$-equicontinuity points.

For a $\GG$-invariant measure~$\mu$, the set of $\mu$-equicontinuity points of~$\varphi$ is $\GG$-invariant.
Thus, when $\mu$ is $\GG$-ergodic, this set has either full or null measure.
The percolated additive CA has equicontinuity points with respect to every fully supported $\GG$-invariant measure (use Proposition~\ref{prop:percolated_is_almost_equicontinuous}), even though the set of such points has null measure.
This leads to the following question in analogy with the construction of Sablik and Theyssier~\cite{ST2010}.

\begin{question}
    Let $\GG$ be a non locally virtually cyclic group.
    Does there exist a CA on $\GG$ that has no $\mu$-equicontinuity points and is not $\mu$-sensitive for some Bernoulli measure~$\mu$?
\end{question} 

Proposition~\ref{prop:mu-equicontinuity} provides a characterization of $\mu$-equicontinuity in terms of $\mu$-density points.  We do not know if the local version of that proposition holds.
\begin{question}
 Let $(J_n)_{n\in\NN}$ be a co-final chain. Is it true that $x\in A^{\GG}$ is a $\mu$-equicontinuity point if and only $x$ is a point of $\mu$-density of $C(x,F,\varphi)$ with respect to $(J_n)_{n\in \NN}$ for every $F\Subset\GG$ ?  
\end{question}
 
We note that in other references~\cite{Gilman1987,GarciaRamos2017}, the local definition of $\mu$-equicontinuity for $\GG=\ZZ$ is based on $\mu$-density points with respect to the co-final chain of intervals centered in the origin.

\subsection{Reversible cellular automata}

A CA is called \Emph{reversible} if it has an inverse that is itself a CA.  Since the space of configurations is compact and Hausdorff, every bijective CA is reversible.
The percolated additive CA (Example~\ref{exp:random-percolated additive-CA}) is neither injective nor surjective, and thus is far from being reversible.
It is natural to ask:

\begin{question}
    Let $\GG$ be a group that is not virtually cyclic.
    Does there exist a reversible CA that is neither sensitive nor equicontinuous with respect to some Bernoulli measure?
\end{question}

We propose a candidate, which is a variant of the percolated additive CA.
Let $S\Subset \GG$ be a set of generators.
Let $A \isdef \{\symb{0},\symb{1}\}\times \{\symb{0},\symb{1}\}\times \{\symb{0},\symb{1}\}^S$ and define a CA $\varphi \colon A^{\GG}\to A^{\GG}$ by
\begin{align*}
    \varphi(x,y,w)_g &\isdef
        \bigg(y_g, \Big(x_g+\sum_{s\in S} w_{g}(s)\cdot y_{gs}\Big) \bmod{2}, w_g\bigg),
        \qquad\text{for every $g \in \GG$.}
\end{align*}
A straightforward verification shows that this CA is reversible.
Furthermore, one can argue as in Proposition~\ref{prop:random-percolated additive:not-mu-sensitive} that this CA is not sensitive with respect to any fully supported measure.
We suspect that an adaptation of our argument can be used to show that when $\GG$ is not virtually~$\ZZ$, this CA (with an appropriate choice of $S$) is not equicontinuous with respect to some Bernoulli measure.

\bibliographystyle{plainurl}
\bibliography{bibliography}

\bigskip
\Addresses

\end{document}